\theoremstyle{plain}
\newtheorem{thm}{Theorem}[section]
\newtheorem{lem}[thm]{Lemma}
\newtheorem{cor}[thm]{Corollary}
\theoremstyle{definition}
\newtheorem{rem}[thm]{Remark}
\newtheorem{defi}[thm]{Definition}
\newtheorem{ex}[thm]{Example}
\numberwithin{equation}{section}
\def\loc{\operatorname{loc}}
\def\rad{\operatorname{rad}}
\def\supp{\operatorname{supp}}
\def\esup{\operatornamewithlimits{ess\,sup}}
\def\R{\mathbb R}
\def\ap{\approx}
\def\mf{\mathfrak M}
\def\rn{\R^n}
\def\a{\alpha}
\def\la{\lambda}
\def\vp{\varphi}
\def\M{\mathcal M}
\def\dn{\downarrow}
\def\ls{\lesssim}
\def\gs{\gtrsim}
\def\ve{\varepsilon}
\def\R{\mathbb R}
\def\M{\mathcal M}
\def\Bxr {{B(x,r)}}
\def\Lploc{L_p^{\rm loc}(\rn)}
\def\Lloc{L_1^{\rm loc}(\rn)}
\def\B{\operatorname{BMO}}
\def\WM{\operatorname{\mathcal {W  \! M}}}
\begin{document}

\baselineskip=17pt

\title[]{Weak-type estimates in Morrey spaces for maximal commutator and commutator of maximal function}

\author[A. Gogatishvili]{Amiran Gogatishvili}
\address{Institute of Mathematics \\
Academy of Sciences of the Czech Republic \\
\v Zitn\'a~25 \\
115~67 Praha~1, Czech Republic} \email{gogatish@math.cas.cz}

\author[R.Ch.Mustafayev]{Rza Mustafayev}
\address{Department of Mathematics \\ Faculty of Science and Arts \\ Kirikkale
University \\ 71450 Yahsihan, Kirikkale, Turkey}
\email{rzamustafayev@gmail.com}

\author[M. A\v{g}cayazi]{M\"{u}jdat A\v{g}cayazi}
\address{Department of Mathematics \\ Faculty of Science and Arts \\ Kirikkale
University \\ 71450 Yahsihan, Kirikkale, Turkey}
\email{mujdat87@gmail.com}

\thanks{The research of A. Gogatishvili was partly supported by the grants P201-13-14743S of the Grant Agency of the Czech Republic and RVO: 67985840, by Shota Rustaveli National Science Foundation grants no. 31/48  (Operators in some function spaces and their applications in Fourier Analysis) and no. DI/9/5-100/13 (Function spaces, weighted inequalities for integral operators and problems of summability of Fourier series). The research of the first and second authors was partly supported by the joint project between  Academy of Sciences of Czech Republic and The Scientific and Technological Research Council of Turkey}

\subjclass[2010]{42B25, 42B35}

\keywords{Morrey spaces,  maximal operator, commutator, BMO.}

 \begin{abstract}
    In this paper it is shown that the Hardy-Littlewood maximal operator $M$ is not bounded on Zygmund-Morrey space $\mathcal{M}_{L(\log L),\lambda}$, but $M$ is still bounded on $\mathcal{M}_{L(\log L),\lambda}$ for radially decreasing functions. The boundedness of the iterated maximal operator $M^2$ from  $\mathcal{M}_{L(\log L),\lambda}$ to weak Zygmund-Morrey space $\WM_{L(\log L),\lambda}$ is proved. The class of functions for which the maximal commutator $C_b$ is bounded from  $\mathcal{M}_{L(\log L),\lambda}$ to $\WM_{L(\log L),\lambda}$
    are characterized. It is proved that the commutator of the
    Hardy-Littlewood maximal operator $M$ with function $b \in \B(\rn)$ such that $b^- \in L_{\infty}(\rn)$ is bounded from $\mathcal{M}_{L(\log L),\la}$ to $\WM_{L(\log L),\lambda}$. New pointwise characterizations of $M_{\alpha} M$ by means of
    norm of Hardy-Littlewood maximal function in classical Morrey spaces are given.
 \end{abstract}

\maketitle

\section{Introduction }

Given a locally integrable function $f$ on $\rn$ and $0\leq \a <n$,
the fractional maximal function $M_{\a}f$ of $f$ is defined by
$$
M_{\a}f(x):=\sup_{Q \ni x}|Q|^{\frac{\a-n}{n}}\int_Q |f(y)|\,dy,
\qquad (x\in\rn),
$$
where the supremum is taken over all cubes $Q$ containing $x$. The
operator $M_{\a}:~f \rightarrow M_{\a}f$ is called the fractional
maximal operator. $M: = M_{0}$ is the classical Hardy-Littlewood maximal operator.

The study of maximal operators is one of the most important topics in harmonic analysis.
These significant non-linear operators, whose behavior are very informative in particular in differentiation theory,
provided the understanding and the inspiration for the development of the general class of singular and potential operators
(see, for instance, \cite{stein1970,guz1975,GR,tor1986, stein1993,graf2008,graf}).

Let $f\in\Lloc$. Then $f$ is said to be in $\B (\rn)$ if the
seminorm given by
\begin{equation*}
\|f\|_{*}:= \sup_Q\frac{1}{|Q|}\int_Q |f(y)-f_Q|dy
\end{equation*}
is finite.

\begin{defi}
    Given  a measurable function $b$ the maximal commutator is defined
    by
    \begin{equation*}
    C_b(f)(x) : = \sup_{Q \ni x}\frac{1}{|Q|}\int_{Q}
    |b(x)-b(y)||f(y)|dy
    \end{equation*}
    for all $x\in\rn$.
\end{defi}
This operator plays an important role in the study of commutators of
singular integral operators with $\B$ symbols (see, for instance, \cite{GHST,LiHuShi,ST1,ST2}). The maximal operator $C_b$ has been studied intensively and there
exist plenty of results about it. Garcia-Cuerva
et al. \cite{GHST} proved the following statement.
\begin{thm}\label{Cb}
    Let $1 < p < \infty$. $C_b$ is bounded on $L_p(\rn)$ if and only if
    $b\in\B(\rn)$.
\end{thm}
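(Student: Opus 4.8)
The plan is to prove the two implications separately, the sufficiency being the substantial part. For the sufficiency ($b \in \B(\rn) \Rightarrow C_b$ bounded on $\lp$), the route I would take is to dominate $C_b$ pointwise by a composition of maximal operators and then invoke the Hardy--Littlewood maximal theorem. Fixing $x$ and a cube $Q \ni x$, write $b(x) - b(y) = (b(x) - b_Q) - (b(y) - b_Q)$, so that
\begin{equation*}
\frac{1}{|Q|}\int_Q |b(x)-b(y)||f(y)|\,dy \le |b(x)-b_Q|\,\frac{1}{|Q|}\int_Q |f| + \frac{1}{|Q|}\int_Q |b(y)-b_Q||f(y)|\,dy.
\end{equation*}
The second term I would control by the generalized Hölder inequality in Orlicz spaces with the conjugate pair $(\exp L, L(\log L))$, together with the John--Nirenberg inequality in the form $\|b-b_Q\|_{\exp L, Q} \ls \|b\|_*$; since $x \in Q$ this yields a bound by $\|b\|_* M_{L(\log L)} f(x)$. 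Using the pointwise equivalence $M_{L(\log L)} f \ap M^2 f$, the whole of $C_b f$ should then be majorized by $c\,\|b\|_* M^2 f(x)$. The $\lp$ bound follows by applying the Hardy--Littlewood maximal theorem twice: $\|C_b f\|_{\lp} \ls \|b\|_* \|M(Mf)\|_{\lp} \ls \|b\|_* \|f\|_{\lp}$ for $1 < p < \infty$.

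For the necessity ($C_b$ bounded $\Rightarrow b \in \B(\rn)$), I would test the operator on characteristic functions of cubes. For a fixed cube $Q$ and $x \in Q$,
\begin{equation*}
C_b(\chi_Q)(x) \ge \frac{1}{|Q|}\int_Q |b(x)-b(y)|\,dy \ge \Big|b(x) - \frac{1}{|Q|}\int_Q b(y)\,dy\Big| = |b(x)-b_Q|,
\end{equation*}
the middle inequality being Jensen's. Raising to the $p$-th power, integrating over $Q$, and using $\|\chi_Q\|_{\lp}^p = |Q|$ gives
\begin{equation*}
\frac{1}{|Q|}\int_Q |b(x)-b_Q|^p\,dx \le \frac{1}{|Q|}\|C_b(\chi_Q)\|_{\lp}^p \le \|C_b\|_{\lp\to\lp}^p.
\end{equation*}
A further application of Jensen's (or Hölder's) inequality then yields $\frac{1}{|Q|}\int_Q |b-b_Q| \le \|C_b\|_{\lp\to\lp}$, and taking the supremum over all cubes $Q$ gives $\|b\|_* \le \|C_b\|_{\lp\to\lp} < \infty$, i.e. $b \in \B(\rn)$.

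The main obstacle lies in the sufficiency, specifically in the first term $|b(x)-b_Q|\,\frac{1}{|Q|}\int_Q |f|$ once the supremum over $Q \ni x$ is taken: the oscillation factor $\sup_{Q\ni x}|b(x)-b_Q|$ is not bounded by $\|b\|_*$, so a naive separation of the two factors fails. The logarithmic growth of $|b_{Q_0}-b_Q|$ for nested cubes (a consequence of the $\B$ condition) is precisely what must be absorbed into the extra logarithm encoded in $M_{L(\log L)} \ap M^2$; making this absorption rigorous, i.e. establishing the pointwise estimate $C_b f \ls \|b\|_* M^2 f$ cleanly, is the delicate step. A robust alternative that sidesteps the pointwise bound is to estimate the sharp maximal function, showing $(C_b f)^{\#} \ls \|b\|_* M_{L(\log L)} f$ and then passing to $\lp$ via the Fefferman--Stein inequality; I expect the Orlicz--Hölder estimate for the oscillation term to be the technical heart in either approach.
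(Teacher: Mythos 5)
Your necessity argument is correct, complete, and is in substance the paper's own: the paper never proves Theorem \ref{Cb} directly (it is quoted from \cite{GHST}), but in the Morrey analogue (Theorem \ref{thm3.1}, implication (ii)$\Rightarrow$(i)) it tests $C_b$ on $\chi_Q$ and uses the lower bound $C_b(\chi_Q)(x)\gtrsim \frac{1}{|Q|}\int_Q|b-b_Q|$ for $x\in Q$, exactly as you do; your passage through the $p$-th power is harmless (and consistent with Lemma \ref{lem2.4.}). Your primary route for the sufficiency also coincides with the paper's machinery: the pointwise domination $C_b f\le c\|b\|_*M^2f$ (the paper's Theorem \ref{lem1111111}, quoted from \cite{AGKM}) followed by two applications of the Hardy--Littlewood maximal theorem, which is how Theorem \ref{thm1} is run.

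However, the step you yourself flag as delicate is a genuine gap, and it cannot be repaired in the form you propose: the pointwise estimate $C_b f(x)\ls\|b\|_*M^2f(x)$ (equivalently, with $M_{L(1+\log^+L)}$ in place of $M^2$) is false as stated, so the logarithmic growth of $|b(x)-b_Q|$ cannot be absorbed pointwise into $M_{L(\log L)}\ap M^2$. Concretely, take $n=1$, $b(y)=\log|y|\in\B(\R)$, $f=\chi_{[-R,R]}$ and $x_0=1$; choosing $Q=[-R,R]$ gives
\begin{equation*}
C_bf(x_0)\ \ge\ \frac{1}{2R}\int_{-R}^{R}\bigl|\log|y|\bigr|\,dy\ =\ \log R-1+\tfrac{2}{R},
\end{equation*}
while $Mf\le 1$ everywhere, hence $M^2f(x_0)\le 1$, and likewise $\|\chi_{[-R,R]}\|_{L(1+\log^+L),Q}\le 1$ for every cube $Q$, so $M_{L(\log L)}f(x_0)\le 1$. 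Thus the first term in your decomposition, $\sup_{Q\ni x}|b(x)-b_Q|\,\frac{1}{|Q|}\int_Q|f|$, is not dominated by $\|b\|_*$ times \emph{any} maximal operator that is contractive on $L_\infty$; only the second term obeys your Orlicz--H\"older/John--Nirenberg bound $\ls\|b\|_*M_{L(\log L)}f(x)$. (The same example shows $C_b$ is unbounded on $L_\infty$ although $M$ is bounded there, so a transference of the form "$M$ bounded on $X$ implies $C_b$ bounded on $X$" cannot rest on a pointwise inequality; everything remains consistent at the distributional level, since in the example $\{C_bf>\la\}$ has measure $\approx Re^{-c\la}$, compatible with Theorem \ref{HuYang}.) The theorem itself is of course true, but the sufficiency must be closed at the norm level rather than pointwise: your fallback is the right instinct --- linearize the supremum defining $C_b$, estimate the sharp maximal function of the linearized operator (the oscillation term is then handled in mean, where John--Nirenberg applies), and conclude by Fefferman--Stein for $1<p<\infty$, which is in the spirit of the original weighted argument of \cite{GHST}. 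As written, your proof delegates its technical heart to an estimate that does not hold.
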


\begin{defi}
    Given a measurable function $b$ the commutator of the
    Hardy-Littlewood maximal operator $M$ and $b$ is defined by
    \begin{equation*}
    [M,b]f (x): = M(bf)(x) - b(x)Mf(x)
    \end{equation*}
    for all $x\in\rn$.
\end{defi}

The operator $[M,b]$ was studied by Milman et al. in \cite{MilSchon}
and \cite{BasMilRu}. This operator arises, for example, when one
tries to give a meaning to the product of a function in $H^1$ and a
function in $\B$ (which may not be a locally integrable function,
see, for instance, \cite{bijz}). Using real interpolation
techniques, in \cite{MilSchon}, Milman and Schonbek proved the $L_p$-boundedness of the operator $[M,b]$. Bastero, Milman and Ruiz \cite{BasMilRu} proved the next theorem.
\begin{thm}\label{BMR}
    Let $1 < p < \infty$. Then the following assertions are equivalent:

    {\rm (i)} $[M,b]$ is bounded on $L_p(\rn)$.

    {\rm (ii)} $b \in \B(\rn)$ and $b^- \in L_{\infty}(\rn)$.
    \footnote{Denote by $b^+(x)=\max\{b(x),0\}$ and
        $b^-(x)=-\min\{b(x),0\}$, consequently $b=b^+-b^-$ and
        $|b|=b^++b^-$.}
\end{thm}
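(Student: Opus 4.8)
The plan is to prove the equivalence by relating the commutator $[M,b]$ to the maximal commutator $C_b$ through a pointwise inequality, which converts the $L_p$-boundedness question for $[M,b]$ into one already settled for $C_b$ by Theorem \ref{Cb}. The device used in \emph{both} directions is the two-sided comparison
\[
|[M,b]f(x)| \le C_b(f)(x) + 2\,b^-(x)\,Mf(x) \qquad (x \in \rn).
\]
I would establish this by fixing $x$ and a cube $Q \ni x$ and bracketing the average $\tfrac{1}{|Q|}\int_Q |b|\,|f|$ between $|b(x)|\tfrac{1}{|Q|}\int_Q|f| \pm \tfrac{1}{|Q|}\int_Q|b(x)-b(y)|\,|f(y)|\,dy$. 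Taking the supremum over $Q$ yields both $M(bf)(x) \le C_b(f)(x) + |b(x)|Mf(x)$ and $M(bf)(x) \ge |b(x)|Mf(x) - C_b(f)(x)$; since $|b(x)|-b(x)=2b^-(x)$, subtracting $b(x)Mf(x)$ and taking absolute values recombines these into the displayed bound.

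For the implication (ii) $\Rightarrow$ (i) I would simply feed this pointwise bound into the $L_p$ norm. The hypothesis $b \in \B(\rn)$ makes $C_b$ bounded on $L_p(\rn)$ by Theorem \ref{Cb}, while $b^- \in L_{\infty}(\rn)$ together with the boundedness of the Hardy--Littlewood maximal operator on $L_p(\rn)$ for $1<p<\infty$ controls $\|b^-\,Mf\|_{L_p(\rn)} \le \|b^-\|_{L_{\infty}(\rn)}\|Mf\|_{L_p(\rn)} \lesssim \|f\|_{L_p(\rn)}$. Adding the two contributions gives $\|[M,b]f\|_{L_p(\rn)} \lesssim \|f\|_{L_p(\rn)}$.

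The substantive direction is (i) $\Rightarrow$ (ii), where the idea is to test boundedness against the characteristic functions $\chi_Q$ of cubes. For a.e. $x \in Q$ one has $M\chi_Q(x)=1$, so $[M,b]\chi_Q(x)=M(b\chi_Q)(x)-b(x)$, and two lower bounds on $M(b\chi_Q)(x)$ drive the argument. First, at every Lebesgue point $x \in Q$ shrinking subcubes give $M(b\chi_Q)(x) \ge |b(x)|$, whence $[M,b]\chi_Q(x) \ge |b(x)|-b(x)=2b^-(x)$; taking $L_p(Q)$ norms and using $\|[M,b]\chi_Q\|_{L_p(\rn)} \le C|Q|^{1/p}$ yields $\big(\tfrac{1}{|Q|}\int_Q (b^-)^p\big)^{1/p} \le C/2$ for all $Q$, and letting $Q$ shrink to a point gives $b^- \in L_{\infty}(\rn)$. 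Second, averaging over all of $Q$ gives $M(b\chi_Q)(x) \ge \tfrac{1}{|Q|}\int_Q |b| \ge b_Q$, so $[M,b]\chi_Q(x) \ge b_Q-b(x)$; since the left side is nonnegative we get $[M,b]\chi_Q(x) \ge (b_Q-b(x))^+$, and integrating over $Q$, applying Hölder, and using $\int_Q (b-b_Q)=0$ (so that $\int_Q(b_Q-b)^+=\tfrac12\int_Q|b-b_Q|$) produces $\tfrac{1}{|Q|}\int_Q|b-b_Q| \le 2C$, i.e. $b \in \B(\rn)$.

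The main obstacle I anticipate lies in setting up the pointwise comparison cleanly: because $M(bf)$ and $b(x)Mf$ are each defined through a supremum, the difference cannot be handled cube-by-cube in a single pass, so the upper and lower bounds for $M(bf)(x)$ must be argued separately before recombining, and the sign bookkeeping through $b^{\pm}$ is where slips are easy. A secondary delicate point is the passage from the averaged estimates back to pointwise conclusions (Lebesgue differentiation for $(b^-)^p$ and for $b$), which requires the test inequalities to hold for arbitrarily small cubes; the choice of $\chi_Q$, exploiting $M\chi_Q \equiv 1$ on $Q$, is precisely what makes both the $L_{\infty}$ and the $\B$ conclusions emerge from the \emph{same} computation.
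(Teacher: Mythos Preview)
The paper does not prove Theorem~\ref{BMR}; it is quoted in the introduction as a result of Bastero, Milman and Ruiz \cite{BasMilRu}, with no proof supplied. So there is nothing in the paper to compare your argument against directly.

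That said, your proof is correct and is essentially the original Bastero--Milman--Ruiz argument. The pointwise inequality you set up is exactly Lemma~\ref{pointwise1} of the paper (inequality~\eqref{eq.0001}), so your (ii)~$\Rightarrow$~(i) direction is fully aligned with the paper's own toolkit; the paper even packages this direction abstractly in Theorem~\ref{thm1} (via Theorem~\ref{lem1111111}). For (i)~$\Rightarrow$~(ii), the paper offers nothing, and your testing on $\chi_Q$ --- exploiting $M\chi_Q\equiv 1$ on $Q$ so that $[M,b]\chi_Q=M(b\chi_Q)-b$ on $Q$, then reading off both $b^-\in L_\infty$ and $b\in\B$ from two different lower bounds on $M(b\chi_Q)$ --- is the standard and correct route. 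The only point worth tightening in a write-up is the remark that $[M,b]\chi_Q(x)\ge 0$ a.e.\ on $Q$ (needed to pass to $(b_Q-b)^+$): this follows from the Lebesgue-point bound $M(b\chi_Q)(x)\ge|b(x)|\ge b(x)$ you already used, but you should state it explicitly where it is invoked.
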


The opertors $C_b$ and $[M,b]$ enjoy weak-type $L(1 + \log^+ L)$ estimate.
\begin{thm}[{\cite[Theorem 1.5]{AGKM}, see also \cite{HuLinYang} and \cite{HuYang}}] \label{HuYang} The following assertions are equivalent:

    {\rm (i)} There exists a positive constant $c$ such that for each $\la >0$, inequality
    \begin{equation}\label{eq0003.7}
    |\{x \in\rn :  C_b(f)(x)> \la \}| \le c
    \int_{\rn}\frac{|f(x)|}{\la}\left(1+\log^+
    \left(\frac{|f(x)|}{\la}\right)\right)dx.
    \end{equation}
    holds for all $f\in L(1 + \log^+ L)(\rn)$.

    {\rm (ii)} $b\in \B (\rn)$.
\end{thm}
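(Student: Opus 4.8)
The plan is to prove the two implications separately: in the direction (ii)$\Rightarrow$(i) I would reduce the endpoint estimate to the classical weak-type $L\log L$ bound for the iterated maximal operator, and in (i)$\Rightarrow$(ii) I would feed a weak-$L^1$ oscillation bound into the self-improving characterization of $\B$.

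For (ii)$\Rightarrow$(i), note first that $C_b$ is positively homogeneous in $f$, so replacing $f$ by $f/\la$ reduces the claim to $|\{C_b f > 1\}| \le c\int_{\rn}|f|(1+\log^+|f|)\,dx$. Fixing $x$ and a cube $Q\ni x$ and splitting $|b(x)-b(y)|\le |b(y)-b_Q|+|b(x)-b_Q|$ gives $C_b f(x)\le A(x)+B(x)$, where $A(x)=\sup_{Q\ni x}\frac{1}{|Q|}\int_Q |b(y)-b_Q||f(y)|\,dy$ and $B(x)=\sup_{Q\ni x}|b(x)-b_Q|\,\frac{1}{|Q|}\int_Q |f(y)|\,dy$. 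The generalized H\"older inequality in Orlicz spaces together with the John--Nirenberg inequality, which yields $\|b-b_Q\|_{\exp L,Q}\le c\|b\|_*$, gives the \emph{pointwise} bound $A(x)\le c\|b\|_*\,M_{L\log L}f(x)$, where $M_{L\log L}$ is the Orlicz maximal operator built on $\Phi(t)=t(1+\log^+t)$. Since $M_{L\log L}f\approx M(Mf)$ and the iterated maximal operator obeys the classical estimate $|\{M(Mf)>1\}|\le c\int |f|(1+\log^+|f|)$, the contribution of $A$ is controlled.

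The term $B$ is the crux and the main obstacle. The factor $|b(x)-b_Q|$ is not pointwise bounded, since a $\B$ function may be unbounded near $Q$; indeed $B$ admits \emph{no} pointwise domination by $M(Mf)$, so a new argument is needed. I would estimate $|\{B>1\}|$ directly by a Calder\'on--Zygmund decomposition of $|f|$, stratified over the dyadic heights $2^k$ at which $\frac{1}{|Q|}\int_Q|f|\approx 2^k$. On each stopping cube the factor $|b(x)-b_Q|$ is distributed according to John--Nirenberg, so $|\{x\in Q:|b(x)-b_Q|>2^{-k}\}|\le |Q|\exp(-c\,2^{-k}/\|b\|_*)$; summing these bounds over the stopping cubes at each height and then over $k$ should produce exactly the gauge $\int |f|(1+\log^+|f|)$, the logarithmic factor arising from the number of relevant scales attached to each value of $|f|$. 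Making this double summation precise, so that the exponential John--Nirenberg decay balances the $L\log L$ gauge, is the delicate point.

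For (i)$\Rightarrow$(ii), I would test the hypothesis on $f=\chi_Q$. Since for $x\in Q$ one has $C_b(\chi_Q)(x)\ge \frac{1}{|Q|}\int_Q|b(x)-b(y)|\,dy\ge |b(x)-b_Q|$, the inequality \eqref{eq0003.7} yields, for every $\la\ge 1$ (where $\log^+(1/\la)=0$), the bound $|\{x\in Q:|b(x)-b_Q|>\la\}|\le |\{C_b\chi_Q>\la\}|\le c|Q|/\la$. This is only a weak-$L^1$ control of the oscillation and does not bound the mean oscillation directly; the key is the self-improving (Str\"omberg / one-level) characterization of $\B$: there is a dimensional constant $\e_n\in(0,1)$ such that if $|\{x\in Q:|b(x)-b_Q|>\kappa\}|\le \e_n|Q|$ for every cube $Q$, then $b\in\B(\rn)$ with $\|b\|_*\le c_n\kappa$. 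Choosing $\kappa=c/\e_n$ makes the right-hand side $c|Q|/\kappa=\e_n|Q|$, and the lemma delivers $b\in\B(\rn)$, completing the equivalence.
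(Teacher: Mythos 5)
This theorem is quoted in the paper from \cite[Theorem 1.5]{AGKM} (with pointers to \cite{HuLinYang,HuYang}) and is not proved there; the paper's surrounding machinery would give (ii)$\Rightarrow$(i) in one line (the pointwise bound of Theorem \ref{lem1111111} followed by P\'erez's Lemma \ref{lem002.8}), and the paper's proof of Theorem \ref{main10} contains the model testing argument for (i)$\Rightarrow$(ii). Your (i)$\Rightarrow$(ii) is correct, but heavier than needed: the John--Str\"omberg self-improvement can be avoided entirely, since for \emph{every} $x\in Q$ one has $C_b(\chi_Q)(x)\ge\max\bigl(|b(x)-b_Q|,\;\frac{1}{|Q|}\int_Q|b-b_Q|-|b(x)-b_Q|\bigr)\ge\frac{1}{2|Q|}\int_Q|b-b_Q|$, so choosing $\la$ equal to a quarter of the mean oscillation in \eqref{eq0003.7} makes the superlevel set contain all of $Q$ and forces $\la\le c\,(1+\log^+(1/\la))$, hence a uniform bound on $\frac{1}{|Q|}\int_Q|b-b_Q|$; this is exactly the argument the paper runs in the weak Zygmund--Morrey setting.

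The genuine gap is in (ii)$\Rightarrow$(i), precisely at the point you flag as ``delicate'': the term $B$ is never actually estimated, and the sketch you give cannot be ``made precise'' as it stands. Two concrete defects. First, the John--Nirenberg bound you propose to sum, $|\{x\in Q:|b(x)-b_Q|>2^{-k}\}|\le|Q|\,e^{-c2^{-k}/\|b\|_*}$, has no decay at all for $k\ge0$; and for $k<0$ it decays so fast that your double sum would close to $c\|f\|_1$: stopping cubes at height $2^k$ carry total measure $\ls 2^{-k}\|f\|_1$, the factor $e^{-c2^{|k|}/\|b\|_*}$ kills $2^{|k|}$, and the range $k\ge0$ contributes $\sum_{k\ge0}2^{-k}\|f\|_1$. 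That would be a weak-$(1,1)$ bound for $C_b$, which is false: with $b(y)=\log^+(1/|y|)$ and $f=\varepsilon^{-1}\chi_{(0,\varepsilon)}$ one gets $C_bf(x)\gs x^{-1}\log(x/\varepsilon)$ on $(\varepsilon,1)$, so for $\la\ap\log(1/\varepsilon)$ the superlevel set has measure $\ap1$ while $c\|f\|_1/\la\to0$. Since your scheme ``proves too much,'' a step must fail, and it is this one: the cube $Q\ni x$ realizing the supremum in $B$ is generally much smaller than the stopping cube $Q_j$, and $|b_Q-b_{Q_j}|$ can be as large as $c\|b\|_*\log(|Q_j|/|Q|)$; this logarithmic drift of the averages across scales is precisely where the $L\log L$ gauge originates, and your summation never confronts it. The actual endpoint proofs in \cite{HuLinYang,HuYang,AGKM} decompose $f$ at a single height and use, in addition to John--Nirenberg on dilated cubes, the $L_2$-boundedness of $C_b$ (Theorem \ref{Cb}) on the good part --- an ingredient absent from your outline. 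Finally, your side remark that $B$ admits no pointwise domination by $M^2f$ is in fact correct, and worth making explicit: with $b(y)=\log^+(1/|y|)$ and $f=\chi_{B(0,1)}$ one has $C_bf(x)\gs\log(1/|x|)$ on $B(0,1)$ while $M^2f\le1$, so the pointwise inequality of Theorem \ref{lem1111111} cannot hold verbatim for $C_b$ as defined here (it is safe for the $b_Q$-variant, i.e.\ your term $A$). This confirms that the hard direction really does require an endpoint argument of the kind you attempted --- but your proposal does not yet contain one.
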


\begin{thm} \cite[Theorem 1.6]{AGKM} \label{thm3495195187t}
    Let $b \in \B(\rn)$ such that $b^-\in L_{\infty}(\rn)$. Then there
    exists a positive constant $c$ such that
    \begin{align}
    |\{x \in\rn : |[M,b] f(x)|  > \la \}| \leq c c_0\left(1+\log^+
    c_0\right)\int_{\rn}\frac{|f(x)|}{\la}\left(1+\log^+\left(\frac{|f(x)|}{\la}\right)\right)dx, \label{weak}
    \end{align}
    for all $f\in L\left(1 + \log^+ L\right)$ and $\la >0$, where
    $c_0=\|b^+\|_{*}+\|b^-\|_{\infty}$.
\end{thm}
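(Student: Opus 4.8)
The plan is to deduce the weak-type inequality for $[M,b]$ from the weak-type estimate for the maximal commutator $C_b$ (Theorem~\ref{HuYang}, inequality~\eqref{eq0003.7}) together with the weak $(1,1)$ bound for $M$. The bridge is a pointwise domination of $|[M,b]f|$ by $C_b(f)$ plus an error term controlled by $b^-$ and $Mf$; since $b^-\in L_\infty(\rn)$, that error term is harmless, and this is precisely where the hypothesis is used.

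First I would prove the pointwise estimate
\[
|[M,b]f(x)|\le C_b(f)(x)+2\,b^-(x)\,Mf(x)\le C_b(f)(x)+2\|b^-\|_\infty\,Mf(x),\qquad x\in\rn.
\]
To get it, fix a cube $Q\ni x$ and set $A_Q=\frac1{|Q|}\int_Q|b||f|$, $B_Q=\frac1{|Q|}\int_Q|f|$, so that $A_Q-b(x)B_Q=\frac1{|Q|}\int_Q(|b(y)|-b(x))|f(y)|\,dy$. The elementary bounds $|b(y)|-b(x)\le|b(y)-b(x)|+2b^-(x)$ (from $|b(x)|-b(x)=2b^-(x)$) and $b(x)-|b(y)|\le|b(x)-b(y)|$ (using $b(x)=|b(x)|$ when $b(x)\ge0$, and the trivial sign otherwise) let me estimate $M(bf)(x)-b(x)Mf(x)$ and $b(x)Mf(x)-M(bf)(x)$ by taking suprema over $Q$; separating the cases $b(x)\ge0$ and $b(x)<0$ (where $|b(x)|=b^-(x)\le\|b^-\|_\infty$) yields the displayed inequality.

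Next I would upgrade Theorem~\ref{HuYang} to a version with explicit dependence on the symbol. Since $C_{tb}=t\,C_b$ and $\|tb\|_*=t\|b\|_*$ for $t>0$, applying~\eqref{eq0003.7} to the normalised symbol $b/\|b\|_*$ and rescaling $\la$, together with the subadditivity $\log^+(st)\le\log^+s+\log^+t$ (whence $1+\log^+(st)\le(1+\log^+s)(1+\log^+t)$), gives
\[
|\{x\in\rn:C_b(f)(x)>\la\}|\le c\,\|b\|_*\bigl(1+\log^+\|b\|_*\bigr)\int_{\rn}\frac{|f(x)|}{\la}\Bigl(1+\log^+\frac{|f(x)|}{\la}\Bigr)\,dx.
\]
Because $\|b\|_*\le\|b^+\|_*+\|b^-\|_*\le\|b^+\|_*+2\|b^-\|_\infty\le2c_0$ and $t\mapsto t(1+\log^+t)$ is increasing, the prefactor is controlled by $c\,c_0(1+\log^+c_0)$. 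For the error term I would use the weak $(1,1)$ bound $|\{Mf>t\}|\le \frac{c}{t}\int_{\rn}|f|$, which after choosing $t\sim\la/\|b^-\|_\infty$ contributes at most $c\,\|b^-\|_\infty\,\la^{-1}\int_{\rn}|f|\le c\,c_0(1+\log^+c_0)\int_{\rn}\frac{|f|}{\la}(1+\log^+\frac{|f|}{\la})$, using $1\le1+\log^+(\cdot)$ and $\|b^-\|_\infty\le c_0$.

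Finally I would combine these via the inclusion
\[
\{|[M,b]f|>\la\}\subseteq\Bigl\{C_b(f)>\tfrac\la2\Bigr\}\cup\Bigl\{2\|b^-\|_\infty Mf>\tfrac\la2\Bigr\},
\]
estimate each set by the two bounds above, and add. The main obstacle is the pointwise lemma: tracking the sign of $b(x)$ so as to extract exactly the factor $b^-(x)$, rather than a crude $|b(x)|$ (which would be fatal since $b$ itself need not be bounded), is the delicate point and the only place the hypothesis $b^-\in L_\infty(\rn)$ enters. A secondary technical matter is making the homogeneity/rescaling argument for $C_b$ rigorous at the level of the $L(1+\log^+L)$ modular, so that it produces precisely the claimed constant $c_0(1+\log^+c_0)$ and not a larger expression.
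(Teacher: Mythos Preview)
The paper does not supply its own proof of this theorem; it is quoted verbatim from \cite{AGKM} (as Theorem~1.6 there), so there is no in-paper argument to compare against. That said, your proposed proof is correct in outline, and in fact the pointwise inequality you derive is exactly \eqref{eq.0001} of Lemma~\ref{pointwise1} (also cited from \cite{AGKM}), so you need not rederive it.

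There is, however, a shorter route available using only ingredients already stated in this paper. Rather than splitting into $C_b(f)$ and $2\|b^-\|_\infty Mf$ and then performing a rescaling argument on Theorem~\ref{HuYang} (whose uniformity of the constant $c$ over all $b$ with $\|b\|_*=1$ you correctly flag as needing justification), one may invoke the pointwise bound \eqref{eqPointwise3} of Theorem~\ref{lem1111111},
\[
|[M,b]f(x)|\le c\,c_0\,M^2f(x),\qquad c_0=\|b^+\|_*+\|b^-\|_\infty,
\]
and then apply Lemma~\ref{lem002.8} directly to $M^2$ at level $\la/(c\,c_0)$, using \eqref{log} to extract the factor $c_0(1+\log^+c_0)$. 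This bypasses both the union-of-sets decomposition and the homogeneity issue for Theorem~\ref{HuYang}, since the $\|b\|_*$-dependence is already explicit in \eqref{eqPointwise3}. Your approach and this one are equivalent in substance; the latter simply absorbs the pointwise splitting and the $C_b\lesssim\|b\|_*M^2$ bound into a single step.
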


Operators $C_b$ and $[M,b]$ essentially differ from each other. For
example, $C_b$ is a positive and sublinear operator, but $[M,b]$ is
neither positive nor sublinear. However, if $b$ satisfies some
additional conditions, then operator $C_b$ controls $[M,b]$.
\begin{lem}\cite[Lemma 3.1 and 3.2]{AGKM}\label{pointwise1}
    Let $b$ be any non-negative locally integrable function. Then
    \begin{equation}\label{eqPointwise}
    |[M,b]f(x)|\leq C_b(f)(x) \quad (x \in \rn)
    \end{equation}
    holds for all $f \in L_1^{\loc}(\rn)$.

    If $b$ is any locally integrable function on $\rn$, then
    \begin{equation}\label{eq.0001}
    |[M,b]f|(x)\leq C_b(f)(x)+ 2b^-(x) Mf(x) \quad (x \in \rn)
    \end{equation}
    holds for all $f \in L_1^{\loc}(\rn)$.
\end{lem}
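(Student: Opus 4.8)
\section*{Proof proposal}

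The plan is to reduce both pointwise inequalities to elementary estimates on the cube averages $\frac{1}{|Q|}\int_Q$ that appear in the definitions of $M$ and $C_b$, and then to pass to the supremum over cubes $Q\ni x$. Throughout I would use that $M(bf)(x)=\sup_{Q\ni x}\frac{1}{|Q|}\int_Q|b(y)||f(y)|\,dy$, so that for each fixed $Q\ni x$ one can insert and remove the constant $b(x)$ inside the integral and compare $\frac{1}{|Q|}\int_Q|b(y)||f(y)|\,dy$ with $b(x)\cdot\frac{1}{|Q|}\int_Q|f(y)|\,dy$. The algebraic facts I would lean on are the identity $|b(x)|-b(x)=2b^-(x)$ together with the triangle inequalities $b(y)-b(x)\le|b(y)-b(x)|$ and $\bigl||b(y)|-|b(x)|\bigr|\le|b(y)-b(x)|$.

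For \eqref{eqPointwise}, assume $b\ge 0$, so $b(x)=|b(x)|$. Fixing $Q\ni x$ and writing $\frac{1}{|Q|}\int_Q b(y)|f(y)|\,dy=b(x)\frac{1}{|Q|}\int_Q|f(y)|\,dy+\frac{1}{|Q|}\int_Q(b(y)-b(x))|f(y)|\,dy$, the last term is at most $C_b(f)(x)$ in absolute value. Since $b(x)\ge 0$ gives $b(x)\frac{1}{|Q|}\int_Q|f|\le b(x)Mf(x)$, taking the supremum over $Q$ yields $M(bf)(x)\le b(x)Mf(x)+C_b(f)(x)$, that is $[M,b]f(x)\le C_b(f)(x)$. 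For the matching lower bound I keep the same decomposition: for every $Q$ one has $\frac{1}{|Q|}\int_Q b(y)|f(y)|\,dy\ge b(x)\frac{1}{|Q|}\int_Q|f(y)|\,dy-C_b(f)(x)$, and then (using $b(x)\ge0$) choosing cubes $Q$ that nearly realize $Mf(x)$ gives $M(bf)(x)\ge b(x)Mf(x)-C_b(f)(x)$. Combining the two bounds proves $|[M,b]f(x)|\le C_b(f)(x)$.

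For \eqref{eq.0001} the cleanest route I see is to first prove the single estimate $M(bf)(x)\le|b(x)|Mf(x)+C_b(f)(x)$ for arbitrary locally integrable $b$: for each $Q$, $\frac{1}{|Q|}\int_Q|b(y)||f(y)|\,dy=|b(x)|\frac{1}{|Q|}\int_Q|f(y)|\,dy+\frac{1}{|Q|}\int_Q(|b(y)|-|b(x)|)|f(y)|\,dy$, and the second term is controlled by $C_b(f)(x)$ via $\bigl||b(y)|-|b(x)|\bigr|\le|b(y)-b(x)|$; passing to the supremum gives the claim. Substituting $|b(x)|=b(x)+2b^-(x)$ turns this into $M(bf)(x)-b(x)Mf(x)\le C_b(f)(x)+2b^-(x)Mf(x)$, the upper half of \eqref{eq.0001}. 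For the lower half I would use $|b(y)|\ge b(y)\ge b(x)-|b(y)-b(x)|$ to get, for every $Q$, $\frac{1}{|Q|}\int_Q|b(y)||f(y)|\,dy\ge b(x)\frac{1}{|Q|}\int_Q|f(y)|\,dy-C_b(f)(x)$, and then pass to the supremum; here the sign of $b(x)$ dictates whether one uses a nearly optimal cube (when $b(x)\ge0$) or merely $\frac{1}{|Q|}\int_Q|f|\le Mf(x)$ (when $b(x)<0$), in either case arriving at $M(bf)(x)\ge b(x)Mf(x)-C_b(f)(x)$, hence $[M,b]f(x)\ge-C_b(f)(x)\ge-C_b(f)(x)-2b^-(x)Mf(x)$.

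The only genuinely delicate point is the interchange of the supremum defining $M(bf)(x)$ with the comparison to $b(x)Mf(x)$: for the upper bounds each average is dominated by a quantity independent of $Q$, so the supremum passes through harmlessly, whereas for the lower bounds the supremum defining $Mf(x)$ need not be attained, and the correct handling splits according to the sign of $b(x)$ as above. A secondary bookkeeping issue is the degenerate case $Mf(x)=+\infty$, which one disposes of by the standard convention that the inequality is trivial where its right-hand side is infinite. Once these are settled, both \eqref{eqPointwise} and \eqref{eq.0001} follow from the elementary average-level estimates with no further analytic input.
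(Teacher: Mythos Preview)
The paper does not supply its own proof of this lemma; it is quoted verbatim from \cite[Lemmas 3.1 and 3.2]{AGKM}, so there is no argument here to compare against. That said, your proposal is correct and follows precisely the natural elementary route one expects for such pointwise inequalities: decompose each cube average by inserting the constant $b(x)$ (or $|b(x)|$), bound the difference term by $C_b(f)(x)$ via the triangle inequality, and pass to the supremum over cubes, handling the sign of $b(x)$ separately when bounding $b(x)\cdot\frac{1}{|Q|}\int_Q|f|$ from below by $b(x)Mf(x)$. The identity $|b(x)|-b(x)=2b^-(x)$ is exactly what produces the extra $2b^-(x)Mf(x)$ term in \eqref{eq.0001}. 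Your treatment of the two delicate points (non-attainment of the supremum in $Mf(x)$, and the degenerate case $Mf(x)=+\infty$) is appropriate. This is essentially the argument one finds in \cite{AGKM}.
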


We recall the following statement from \cite{AGKM}.
    \begin{thm}\cite[Theorem 1.13]{AGKM}\label{thm1}
        Let $b \in \B(\rn)$. Suppose that $X$ is a Banach space of
        measurable functions defined on $\rn$. Moreover, assume that $X$
        satisfies the lattice property, that is,
        $$
        0 \le g \le f \quad \Rightarrow \quad \|g\|_X \ls \|f\|_X.
        $$
        Assume that $M$ is bounded on $X$. Then the operator $C_b$ is
        bounded on $X$, and the inequality
        $$
        \|C_b f\|_X \le c \|b\|_* \|f\|_X
        $$
        holds with constant $c$ independent of $f$.

        Moreover, if $b^- \in L_{\infty}(\rn)$, then the operator $[M,b]$ is
        bounded on $X$, and the inequality
        $$
        \|[M,b] f\|_X \le c (\|b^+\|_* + \|b^-\|_{\infty})\|f\|_X
        $$
        holds with constant $c$ independent of $f$.
    \end{thm}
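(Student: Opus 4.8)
The plan is to reduce both boundedness statements to the boundedness of $M$ (and of its iterate $M^2 = M\circ M$) on $X$ via pointwise domination, exploiting the lattice property to pass from pointwise inequalities to norm inequalities.

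First, for $C_b$, I would establish the pointwise estimate
\begin{equation*}
C_b f(x) \le c\,\|b\|_*\, M^2 f(x) \qquad (x\in\rn),
\end{equation*}
valid for all $b\in\B(\rn)$ and all $f\in\Lloc$. To prove it, fix a cube $Q\ni x$ and apply the generalized H\"older inequality in Orlicz spaces with the complementary Young functions $\Phi(t)=t\log(e+t)$ and $\Psi(t)=e^{t}-1$, obtaining
\begin{equation*}
\frac{1}{|Q|}\int_Q |b(x)-b(y)|\,|f(y)|\,dy \le c\,\|b(x)-b\|_{\exp L,\,Q}\,\|f\|_{L\log L,\,Q}.
\end{equation*}
Splitting $b(x)-b(\cdot)=(b(x)-b_Q)+(b_Q-b(\cdot))$, the John--Nirenberg inequality controls the oscillation part via $\|b_Q-b\|_{\exp L,\,Q}\le c\|b\|_*$, and the classical pointwise equivalence $M_{L\log L}f(x)\approx M^2 f(x)$ lets one replace the Orlicz average $\|f\|_{L\log L,\,Q}$ by $M^2 f(x)$. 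The delicate point, which I expect to be the main obstacle, is the term carrying the factor $|b(x)-b_Q|$: it cannot be bounded by $\|b\|_*$ alone and must instead be absorbed into $M^2 f(x)$ by means of a telescoping chain of shrinking cubes $Q=Q_0\supset Q_1\supset\cdots\to\{x\}$, which is precisely the mechanism producing the extra logarithmic gain that distinguishes $M^2$ from $M$.

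Granting the pointwise estimate, the boundedness of $C_b$ on $X$ follows quickly. Since $M$ is bounded on $X$, so is $M^2$ with $\|M^2\|_{X\to X}\le \|M\|_{X\to X}^{2}$, and the lattice property turns the pointwise inequality into
\begin{equation*}
\|C_b f\|_X \le c\,\|b\|_*\,\|M^2 f\|_X \le c\,\|b\|_*\,\|M\|_{X\to X}^{2}\,\|f\|_X,
\end{equation*}
which is the asserted bound. For the commutator $[M,b]$, I would invoke the pointwise inequality \eqref{eq.0001} of Lemma \ref{pointwise1}, namely $|[M,b]f|(x)\le C_b f(x)+2\,b^-(x)\,Mf(x)$, and estimate the two summands separately using the lattice property and the triangle inequality in $X$. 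The first summand is handled by the bound for $C_b$ just proved, after noting that $b^-\in L_\infty(\rn)$ gives $\|b\|_*\le \|b^+\|_*+\|b^-\|_*\lesssim \|b^+\|_*+\|b^-\|_\infty$. For the second summand, the pointwise estimate $b^-(x)\,Mf(x)\le \|b^-\|_\infty\,Mf(x)$ together with the lattice property and the boundedness of $M$ on $X$ yields $\|b^-\,Mf\|_X\le \|b^-\|_\infty\,\|M\|_{X\to X}\,\|f\|_X$. Adding the two contributions gives $\|[M,b]f\|_X\le c\,(\|b^+\|_*+\|b^-\|_\infty)\,\|f\|_X$, completing the argument.
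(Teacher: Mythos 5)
Your overall architecture coincides with the paper's. The paper deduces Theorem \ref{thm1} from the pointwise estimates of Theorem \ref{lem1111111}, namely $C_b(f)(x)\le c\|b\|_*M^2f(x)$ and $|[M,b]f(x)|\le c(\|b^+\|_*+\|b^-\|_\infty)M^2f(x)$, combined with the lattice property and $\|M^2f\|_X\le\|M\|_{X\to X}^2\|f\|_X$; your reduction is identical, and your treatment of $[M,b]$ through \eqref{eq.0001} of Lemma \ref{pointwise1} together with $\|b\|_*\le\|b^+\|_*+\|b^-\|_*\ls\|b^+\|_*+\|b^-\|_\infty$ is only a cosmetic variant of invoking \eqref{eqPointwise3} directly. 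That part of your argument is correct and complete.

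The genuine gap is in your attempt to \emph{prove} the pointwise bound for $C_b$ rather than quote it. The paper does not prove it either: it imports \eqref{eq.0002} verbatim from \cite[Corollary 1.11]{AGKM}, and for good reason, because the term you yourself flag as delicate cannot be absorbed the way you propose. A telescoping chain $Q=Q_0\supset Q_1\supset\cdots\to\{x\}$ only yields $|b(x)-b_Q|\ls\|b\|_*\bigl(1+\log(\ell(Q)/\ell(Q_k))\bigr)$, which diverges as $Q_k\downarrow\{x\}$, and no pointwise absorption of $|b(x)-b_Q|\,|f|_Q$ into $M^2f(x)$ is available: take $n=1$, $b(y)=\log|y|\in\B(\R)$, $f=\chi_{[0,1]}$, and $x=\e\in(0,1)$ small; with $Q=[0,1]$ one has $|b(\e)-b_Q|\,|f|_Q=|\log\e+1|\ap\log(1/\e)$, while $M^2f\le1$ everywhere because $f\le1$ implies $Mf\le 1$. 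Indeed the full average satisfies $\int_0^1|\log\e-\log y|\,dy\ap\log(1/\e)$, so the difficulty is intrinsic to the term carrying $|b(x)-b_Q|$ and is not an artifact of crude estimation; the logarithmic gain of $M^2$ over $M$ lives in the Orlicz average $\|f\|_{L(1+\log^+L),Q}$, which is of no help against a factor depending on $x$ alone. This same computation shows how delicate the cited pointwise estimate is in the first place. So your sketch fails exactly at the step you identified as the main obstacle: to be aligned with the paper you should invoke Theorem \ref{lem1111111} as a cited black box, as the paper does, or supply a genuinely different argument for that lemma (for instance one operating at the level of norms, where the $|b(x)-b_Q|$ factor can be integrated out), since the telescoping mechanism you describe cannot deliver it.
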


    The proof of previous theorem is based on the following inequalities.
    \begin{thm}\cite[Corollary 1.11 and 1.12]{AGKM}\label{lem1111111}
        Let $b\in \B(\rn)$. Then, there exists a positive constant $c$ such
        that
        \begin{equation}\label{eq.0002}
        C_b(f)(x)\leq c \|b\|_{*} M^2f(x) \qquad (x\in\rn)
        \end{equation}
        for all $f \in L_1^{\loc}(\rn)$.

        Moreover, if $b^- \in L_{\infty}(\rn)$, then, there
        exists a positive constant $c$ such that
        \begin{equation}\label{eqPointwise3}
        |[M,b]f(x)|\leq c \left(\|b^+\|_{*}+\|b^-\|_{\infty}\right)M^2f(x)
        \end{equation}
        for all $f \in  L_1^{\loc}(\rn)$.
    \end{thm}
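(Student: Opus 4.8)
The plan is to prove the pointwise estimate for $C_b$ first and then to deduce the one for $[M,b]$ from it. The estimate \eqref{eq.0002} rests on three tools: the generalized Hölder inequality in Orlicz spaces for the complementary pair $\Phi(t)\approx t\log(e+t)$ and $\Psi(t)\approx e^{t}-1$, the John--Nirenberg inequality, and the pointwise equivalence $M^2f(x)\approx\sup_{Q\ni x}\|f\|_{L\log L,Q}$.

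First I would record the facts I need. The generalized Hölder inequality states that $\frac{1}{|Q|}\int_Q|g\,h|\,dy\ls\|g\|_{\exp L,Q}\|h\|_{L\log L,Q}$ for every cube $Q$, where $\|\cdot\|_{\exp L,Q}$ and $\|\cdot\|_{L\log L,Q}$ are the Luxemburg norms of $\Psi$ and $\Phi$ over $Q$. John--Nirenberg gives $\|b-b_Q\|_{\exp L,Q}\ls\|b\|_{*}$ for all $Q$. Finally, from $\frac{1}{|Q|}\int_Q M(f\chi_Q)\approx\|f\|_{L\log L,Q}$ together with $M(f\chi_Q)\le Mf$ on $Q$ and $x\in Q$, one gets $\|f\|_{L\log L,Q}\ls\frac{1}{|Q|}\int_Q Mf\le M^2f(x)$.

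Now fix $x\in\rn$ and a cube $Q\ni x$. Applying the generalized Hölder inequality to $g(y)=|b(x)-b(y)|$ and to $f$, and then using the third fact, gives $\frac{1}{|Q|}\int_Q|b(x)-b(y)||f(y)|\,dy\ls\|\,|b(x)-b(\cdot)|\,\|_{\exp L,Q}\,M^2f(x)$. It remains to bound this Orlicz norm by $\|b\|_{*}$. Splitting $|b(x)-b(y)|\le|b(x)-b_Q|+|b(y)-b_Q|$ and using John--Nirenberg together with the boundedness of $\|1\|_{\exp L,Q}$ yields $\|\,|b(x)-b(\cdot)|\,\|_{\exp L,Q}\ls|b(x)-b_Q|+\|b\|_{*}$. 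The delicate point---and the step I expect to be hardest---is to absorb the surviving constant $|b(x)-b_Q|$ into $\|b\|_{*}$ uniformly in $Q\ni x$; this is the genuinely scale-dependent part of the argument, since as $Q$ grows $|b(x)-b_Q|$ is controlled only against the decay of the average of $f$ rather than by $\|b\|_{*}$ alone. Once this is settled, taking the supremum over all $Q\ni x$ gives \eqref{eq.0002}.

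The estimate \eqref{eqPointwise3} then follows with no new analysis. By inequality \eqref{eq.0001} of Lemma \ref{pointwise1}, $|[M,b]f(x)|\le C_b(f)(x)+2b^{-}(x)Mf(x)$. Applying \eqref{eq.0002} to the first summand and the elementary bounds $\|b\|_{*}\le\|b^{+}\|_{*}+2\|b^{-}\|_{\infty}$, $b^{-}(x)\le\|b^{-}\|_{\infty}$, and $Mf\le M^2f$ (the last valid almost everywhere, since $M^2f=M(Mf)\ge Mf$), I would conclude $|[M,b]f(x)|\ls(\|b^{+}\|_{*}+\|b^{-}\|_{\infty})M^2f(x)$, which is \eqref{eqPointwise3}.
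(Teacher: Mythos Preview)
This theorem is quoted from \cite{AGKM} (Corollaries~1.11 and~1.12 there) and is not proved in the present paper; there is therefore no in-paper argument to compare yours against. Your reduction of \eqref{eqPointwise3} to \eqref{eq.0002} via \eqref{eq.0001}, together with $\|b\|_{*}\le\|b^{+}\|_{*}+2\|b^{-}\|_{\infty}$ and $Mf\le M^{2}f$, is correct.

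For \eqref{eq.0002} your toolkit is the right one, but the proof as written has a genuine gap at exactly the point you flag. The bound you hope for, namely $\|\,|b(x)-b(\cdot)|\,\|_{\exp L,Q}\ls\|b\|_{*}$ uniformly in $Q\ni x$, is simply false: for $b(y)=\log|y|$ one has $|b(x)-b_{Q}|\sim\log R$ when $Q=B(0,R)$, so this quantity cannot be absorbed into $\|b\|_{*}$ alone. Applying H\"older \emph{before} splitting makes things worse, since you are then left with $|b(x)-b_{Q}|\cdot\|f\|_{L(1+\log^{+}L),Q}$ rather than the smaller $|b(x)-b_{Q}|\cdot|f|_{Q}$. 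The workable route is to split first,
\[
\frac{1}{|Q|}\int_{Q}|b(x)-b(y)||f(y)|\,dy\le|b(x)-b_{Q}|\,|f|_{Q}+\frac{1}{|Q|}\int_{Q}|b(y)-b_{Q}||f(y)|\,dy,
\]
handle the second term by H\"older plus John--Nirenberg plus $M_{L(1+\log^{+}L)}\approx M^{2}$ exactly as you describe, and treat the first term as a product: the logarithmic growth of $|b(x)-b_{Q}|$ in the scale of $Q$ must be played off against the behaviour of $|f|_{Q}$, not estimated separately. Note, incidentally, that for $f\equiv 1\in L_{1}^{\loc}(\rn)$ one has $M^{2}(1)=1$ while $\sup_{Q\ni x}|b(x)-b_{Q}|=\infty$ for $b=\log|\cdot|$, so the pointwise statement \eqref{eq.0002} is delicate in full generality; in \cite{AGKM} and in all applications here the inequality is used only for $f$ with enough decay that this issue does not arise.
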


    The classical Morrey spaces $\mathcal{M}_{p, \lambda} \equiv \mathcal{M}_{p, \lambda} (\rn)$,
    were  introduced by C.~Morrey in \cite{M1938} in order to study regularity questions which appear in the Calculus of Variations,
    and defined as follows:  for $0 \le \lambda \le n$ and $1\le p \le \infty$,
    $$
    \mathcal{M}_{p,\lambda} : = \left\{ f \in   \Lploc:\,\left\| f\right\|_{\mathcal{M}_{p,\lambda }} : =
    \sup_{x\in \rn, \; r>0 }
    r^{\frac{\lambda-n}{p}} \|f\|_{L_{p}(B(x,r))} <\infty\right\},
    $$
    where $\Bxr$ is the open ball centered at $x$ of radius $r$.

    Note that $\mathcal{M}_{p,0}(\rn) = L_{\infty}(\rn)$ and ${\mathcal M}_{p,n}(\rn) = L_{p}(\rn)$.

    These spaces describe local regularity more precisely than Lebesgue spaces and appeared to be quite useful in the study of the local
    behavior of solutions to partial differential equations, a priori estimates and other topics in PDE (cf. \cite{giltrud}).

    The boundedness of the Hardy-Littlewood maximal operator $M$ in
    Morrey spaces $\mathcal{M}_{p,\lambda}$ was proved by F.~Chiarenza
    and M.~Frasca in \cite{ChiFra1987}: It was shown  that $Mf$ is a.e. finite if $f \in \mathcal{M}_{p,\lambda}$ and an estimate
    \begin{equation}\label{ChiFr}
    \|Mf\|_{\mathcal{M}_{p,\lambda}} \le c \|f\|_{\mathcal{M}_{p,\lambda}}
    \end{equation}
    holds if $1 < p < \infty$ and $0 < \lambda < n$, and a weak type estimate \eqref{ChiFr} replaces for $p = 1$, that is, the inequality
    \begin{equation}\label{ChiFrWeakType}
    t |\{Mf > t\} \cap B(x,r)| \le cr^{n - \lambda}
    \|f\|_{\mathcal{M}_{1,\lambda}}
    \end{equation}
    holds with constant $c$ independent of $x,\,r,\,t$ and $f$.

    In \cite{gogmus}, it is proved that the Hardy-Littlewood maximal operator $M$ is bounded on $\M_{1,\la}$, $0 \le \la < n$,
    for radially decreasing functions, that is, the inequality
    \begin{equation}\label{gogmus}
    \|Mf\|_{\M_{1,\la}} \ls \|f\|_{\M_{1,\la}},~f \in \mf^{\rad,\dn}
    \end{equation}
    holds with constant independent of $f$, and an example which shows that $M$ is not bounded on $\M_{1,\la}$, $0 < \la < n$ is given.

    Combining Theorem \ref{lem1111111} with inequalities \eqref{ChiFr} and \eqref{gogmus}, it is easy to generalize Theorems \ref{Cb} and \ref{BMR} to Morrey spaces (see Theorems \ref{thm3.1} and \ref{thm4.5}).

    In this paper the Zygmund-Morrey and the weak Zygmund-Morrey spaces are
    defined. In order to investigate the boundedness of the maximal commutator $C_b$ and the commutator of maximal function $[M,b]$ on Zygmund-Morrey spaces we start to study the boundedness properties of the Hardy-Littlewood maximal operator on these spaces. It is shown that the Hardy-Littlewood maximal operator $M$ is not bounded on Zygmund-Morrey spaces $\mathcal{M}_{L(\log L),\la}$, but $M$ is still bounded on $\mathcal{M}_{L(\log L),\la}$ for radially decreasing functions. The boundedness of the iterated maximal operator $M^2$ from Zygmund-Morrey spaces $\mathcal{M}_{L(\log L),\la}$ to weak Zygmund-Morrey spaces $\WM_{L(\log L),\la}$ is proved. The class of functions for which the maximal commutator $C_b$ is bounded from  $\mathcal{M}_{L(\log L),\la}$ to $\WM_{L(\log L),\la}$
    are characterized. It is proved that the commutator $[M,b]$ is bounded from $\mathcal{M}_{L(\log L),\la}$ to $\WM_{L(\log L),\la}$, when $b \in \B(\rn)$ such that $b^- \in L_{\infty}(\rn)$. New pointwise characterizations of $M_{\alpha} M$ by means of
    norm of Hardy-Littlewood maximal function in Morrey space are given.

    The paper is organized as follows. In Section \ref{sect2} notations and preliminary results are given. Boundedness of maximal commutator
    and commutator of maximal function in Morrey spaces are
    investigated in Section \ref{sect6}. New characterizations of $M_{\a}M$
    are obtained in section \ref{sect7}. In Section \ref{sect7.3} it is shown that the Hardy-Littlewood maximal operator $M$ is not bounded on Zygmund-Morrey spaces $\mathcal{M}_{L(\log L),\la}$, but $M$ is still bounded on $\mathcal{M}_{L(\log L),\la}$ for radially decreasing functions. The boundedness of the iterated maximal operator  from $\mathcal{M}_{L(\log L),\la}$ to $\WM_{L(\log L),\la}$ is proved in Section \ref{sect7.5}. In Section \ref{sect8} weak-type estimates for
    maximal commutator and commutator of maximal function in Zygmund-Morrey spaces are proved.


    \section{Notations and Preliminaries}\label{sect2}

    Now we make some conventions. Throughout the paper, we always denote
    by $c$ a positive constant, which is independent of main
    parameters, but it may vary from line to line. However a constant
    with subscript such as $c_1$ does not change in different
    occurrences. By $a\ls b$ we mean that $a\le c b$ with some positive
    constant $c$ independent of appropriate quantities. If $a\ls b$ and
    $b\ls a$, we write $a\approx b$ and say that $a$ and $b$ are
    equivalent. For a measurable set $E$, $\chi_E$ denotes the
    characteristic function of $E$. Throughout this paper cubes will be
    assumed to have their sides parallel to the coordinate axes. Given
    $\la > 0$ and a cube $Q$, $\la Q$ denotes the cube with the same
    center as $Q$ and whose side is $\la$ times that of $Q$. For a fixed
    $p$ with $p\in [1,\infty)$, $p'$ denotes the dual exponent of $p$,
    namely, $p'=p/(p-1)$. For any measurable set $E$ and any integrable
    function $f$ on $E$, we denote by $f_E$ the mean value of $f$ over
    $E$, that is, $f_E=(1/|E|)\int_E f(x)dx$. Unless a special remark is made, the
    differential element $dx$ is omitted when the integrals under
    consideration are the Lebesgue integrals.

    For the sake of completeness we recall the definitions and some
    properties of the spaces we are going to use.

    Let $\Omega$ be any measurable subset of $\rn$, $n\geq 1$. Let
    $\mf(\Omega)$ denote the set of all measurable functions on $\Omega$
    and $\mf_0 (\Omega)$ the class of functions in $\mf (\Omega)$ that
    are finite a.e.

    For $p\in (0,\infty]$, we define the functional
    $\|\cdot\|_{p,\Omega}$ on $\mf(\Omega)$ by

    \begin{equation*}
    \|f\|_{p,\Omega}:=
    \begin{cases}
    (\int_{\Omega} |f(x)|^p \,dx)^{1/p}  &\text{if } \ \ p<\infty,\\
    \esup_{\Omega} |f(x)|              &\text{if } \ \ p=\infty.
    \end{cases}
    \end{equation*}

    The Lebesgue
    space $L_p(\Omega)$ is given by
    \begin{equation*}
    L_p(\Omega):= \{f\in \mf(\Omega): \|f\|_{p,\Omega}<\infty\}
    \end{equation*}
    and it is equipped with the quasi-norm $\|\cdot\|_{p,\Omega}$.

    Denote by $\mf^{\rad,\dn} = \mf^{\rad,\dn}(\rn)$
    the set of all measurable, radially decreasing functions on
    $\rn$, that is,
    $$
    \mf^{\rad,\dn} : = \{f \in \mf(\rn):\, f(x) = \vp(|x|),\,x \in \rn
    \,\mbox{with}\,\vp \in \mf^{\dn}(0,\infty)\}.
    $$

    Recall that $Mf \ap Hf$, $f \in \mf^{\rad,\dn}$, where
    $$
    Hf (x) : = \frac{1}{|B(0,|x|)|} \int_{B(0,|x|)} |f(y)|\,dy
    $$
    is $n$-dimensional Hardy operator. Obviously, $Hf \in  \mf^{\rad,\dn}$, when $f \in \mf^{\rad,\dn}$.

    The non-increasing rearrangement (see, e.g., \cite[p. 39]{BS}) of a
    function $f \in \mf_0 (\rn)$  is defined by
    $$
    f^*(t) : =\inf\left\{\la >0 : |\{x\in\rn: |f(x)|>\la \}| \leq t
    \right\}\quad (0<t<\infty).
    $$
    Then $f^{**}$ will denote the maximal function of $f^*$ defined by
    $$
    f^{**} (t) : = \frac{1}{t} \int_0^t f^* (s)\,ds, ~ (t > 0).
    $$

    The Zygmund class $L(\log^+ L)(\Omega)$ is the set of all $f \in
    \mf(\Omega)$ such that
    $$
    \int_{\Omega} |f(x)| (\log^+ |f(x)|)\,dx < \infty,
    $$
    where $\log^+ t = \max \{\log t,0\}$, $t>0$. Generally, this is not
    a linear set. Nevertheless, considering the class
    $$
    L(1 + \log^+ L)(\Omega) = \left\{ f \in \mf (\Omega):\, \|f\|_{L(1 +
        \log^+ L)(\Omega)} : = \int_{\Omega} |f(x)|\, (1 + \log^+
    |f(x)|)\,dx < \infty \right\},
    $$
    we obtain a linear set, the Zygmund space.

    The size of $M^2$ is given by the following inequality.
    \begin{lem}\cite[Lemma 1.6]{CPer}\label{lem002.8}
        There exists a positive constant $c$ such that for any function $f$ and for all $\la >0$,
        \begin{equation}
        |\{x  \in\rn :  M^2f(x)> \la \}|  \leq c
        \int_{\rn}\frac{|f(x)|}{\la}\left(1+\log^+
        \left(\frac{|f(x)|}{\la}\right)\right)dx.
        \end{equation}
    \end{lem}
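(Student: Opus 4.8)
The plan is to establish the scale-invariant form of the inequality and then strip off the two copies of $M$ in $M^2 = M\circ M$ one at a time, using nothing beyond the weak type $(1,1)$ bound for the Hardy--Littlewood maximal operator and the elementary truncation device. First I would reduce to $\la = 1$: since $M$ is positively homogeneous, $M^2(f/\la) = \la^{-1} M^2 f$, so that $\{M^2 f > \la\} = \{M^2(f/\la) > 1\}$ and the right-hand side transforms accordingly under $f\mapsto f/\la$. Thus it suffices to prove $|\{x\in\rn : M^2 f(x) > 1\}| \ls \int_{\rn}|f|(1 + \log^+|f|)$.

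Next I would peel off the outer maximal operator. Writing $g := Mf$ and decomposing $g = g\,\chi_{\{g>1/2\}} + g\,\chi_{\{g\le 1/2\}}$, sublinearity of $M$ together with $M\big(g\,\chi_{\{g\le1/2\}}\big)\le 1/2$ gives the inclusion $\{Mg>1\}\subseteq\{M(g\,\chi_{\{g>1/2\}})>1/2\}$. The weak type $(1,1)$ inequality for $M$ then yields
\[
|\{M^2 f > 1\}| \ls \int_{\{Mf>1/2\}} Mf,
\]
which converts the problem into a \emph{local} $L\log L$ estimate for the single operator $M$.

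To treat $\int_{\{Mf>1/2\}} Mf$ I would use the layer-cake identity $\int_{\{Mf>1/2\}} Mf = \tfrac12 |\{Mf>1/2\}| + \int_{1/2}^{\infty} |\{Mf>s\}|\,ds$ and feed in the sharpened weak bound $|\{Mf>s\}|\ls s^{-1}\int_{\{|f|>s/2\}}|f|$, which itself follows from truncating $f$ at height $s/2$ and applying weak $(1,1)$ once more. A Fubini interchange in the $s$-integral is where the logarithm is born: $\int_{1/2}^{\infty} s^{-1}\int_{\{|f|>s/2\}}|f|\,ds \ap \int_{\rn}|f(x)|\,\log^+\!\big(4|f(x)|\big)\,dx$, and on $\{|f|>1/4\}$ one has $\log(4|f|)\ls 1+\log^+|f|$, so this term is $\ls \int_{\rn}|f|(1+\log^+|f|)$. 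The leftover term $\tfrac12|\{Mf>1/2\}|$ is controlled by the same weak bound at $s=1/2$, giving $\ls \int_{\{|f|>1/4\}}|f|\le \int_{\rn}|f|(1+\log^+|f|)$. Combining the three estimates closes the argument.

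No single step here is deep: the only real subtlety, and the place I would be most careful, is keeping the two truncation thresholds ($1/2$ for the outer $M$, $s/2$ for the inner one) consistent through the iteration and verifying that the $s$-integration produces exactly one logarithmic factor rather than too many or too few. As a sanity check I would note that deleting the logarithm from the computation recovers the classical weak $(1,1)$ endpoint, confirming that the extra $\log^+$ is precisely the cost of the second maximal operator. An alternative route, via the Riesz--Herz equivalence $(Mg)^*(t)\ap g^{**}(t)$ applied twice to obtain $(M^2 f)^*(t)\ap t^{-1}\int_0^t f^*(u)\log(et/u)\,du$, leads to the same distributional estimate but requires the rearrangement machinery, so I would prefer the self-contained truncation proof above.
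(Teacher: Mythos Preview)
The paper does not actually prove this lemma: it is quoted verbatim from \cite[Lemma~1.6]{CPer} and used as a black box, so there is no ``paper's own proof'' to compare against. Your argument is correct and is in fact the standard one (and essentially the one P\'erez gives): reduce to $\la=1$ by homogeneity, strip the outer $M$ via the truncation $g=g\chi_{\{g>1/2\}}+g\chi_{\{g\le1/2\}}$ together with weak~$(1,1)$ to land on $\int_{\{Mf>1/2\}}Mf$, then use the refined weak bound $|\{Mf>s\}|\ls s^{-1}\int_{\{|f|>s/2\}}|f|$ and Fubini to produce the single logarithm. All the thresholds and the layer-cake bookkeeping you wrote out check, and your closing remark about the rearrangement route via $(Mg)^*\ap g^{**}$ is also a legitimate alternative.
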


    The following important result regarding $\B$ is true.

    \begin{lem}[{\cite{JN} and \cite{BenSharp}}]\label{lem2.4.}
        For $p\in (0,\infty)$, $\B(p)(\rn)=\B(\rn)$, with equivalent norms,
        where
        $$
        \|f\|_{\B(p)(\rn)}:=\sup_{Q}\left(
        \frac{1}{|Q|}\int_{Q}|f(y)-f_{Q}|^p dy\right)^{\frac{1}{p}}.
        $$
    \end{lem}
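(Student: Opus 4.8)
\emph{Plan.} The statement is a quantitative form of the John--Nirenberg inequality, and I would deduce the whole equivalence from that single estimate. First observe the trivial half: for $0<p\le q<\infty$, Jensen's inequality on the probability space $(Q,|Q|^{-1}dx)$ gives
\[
\Big(\frac{1}{|Q|}\int_Q|f-f_Q|^p\Big)^{1/p}\le\Big(\frac{1}{|Q|}\int_Q|f-f_Q|^q\Big)^{1/q},
\]
so $\|f\|_{\B(p)(\rn)}\le\|f\|_{\B(q)(\rn)}$. Since $\|f\|_{\B(1)(\rn)}=\|f\|_*$, the family $\{\|\cdot\|_{\B(p)(\rn)}\}_{p>0}$ is nondecreasing in $p$, with $\|f\|_{\B(p)(\rn)}\le\|f\|_*$ for $p\le1$ and $\|f\|_*\le\|f\|_{\B(q)(\rn)}$ for $q\ge1$. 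Hence equivalence of all the norms follows once a large exponent is bounded by a small one.

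The engine is the John--Nirenberg inequality \cite{JN}: there are dimensional constants $c_1,c_2>0$ such that for every $f\in\B(\rn)$, every cube $Q$ and every $\la>0$,
\[
|\{x\in Q:\ |f(x)-f_Q|>\la\}|\le c_1|Q|\exp\!\big(-c_2\la/\|f\|_*\big).
\]
For $p\ge1$ I would feed this into the layer--cake formula,
\[
\frac{1}{|Q|}\int_Q|f-f_Q|^p=\frac{p}{|Q|}\int_0^\infty\la^{p-1}|\{x\in Q:|f-f_Q|>\la\}|\,d\la\le pc_1\int_0^\infty\la^{p-1}e^{-c_2\la/\|f\|_*}\,d\la,
\]
and the last integral equals $\G(p)(\|f\|_*/c_2)^p$. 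Taking the $p$-th root and the supremum over $Q$ yields $\|f\|_{\B(p)(\rn)}\le(c_1\G(p+1))^{1/p}c_2^{-1}\|f\|_*$. Combined with the Jensen bound this settles the equivalence for every $p\ge1$, and in particular gives $\|f\|_{\B(2)(\rn)}\ap\|f\|_*$.

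The delicate range is $0<p<1$, where Jensen supplies only $\|f\|_{\B(p)(\rn)}\le\|f\|_*$ and the reverse inequality is the real content. I would obtain it by interpolating the $L^1$ oscillation between the $L^p$ and $L^2$ oscillations: choosing $\t\in(0,1)$ with $1=(1-\t)/p+\t/2$, H\"older's inequality (log--convexity of $L^r$-norms on $(Q,|Q|^{-1}dx)$) gives
\[
\frac{1}{|Q|}\int_Q|f-f_Q|\le\Big(\frac1{|Q|}\int_Q|f-f_Q|^p\Big)^{\frac{1-\t}{p}}\Big(\frac1{|Q|}\int_Q|f-f_Q|^2\Big)^{\frac{\t}{2}},
\]
whence $\|f\|_*\le\|f\|_{\B(p)(\rn)}^{1-\t}\|f\|_{\B(2)(\rn)}^{\t}\ls\|f\|_{\B(p)(\rn)}^{1-\t}\|f\|_*^{\t}$ by the already-proved case $p=2$; absorbing the factor $\|f\|_*^{\t}$ then gives $\|f\|_*\ls\|f\|_{\B(p)(\rn)}$. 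The one point requiring care, and the main obstacle, is the a priori finiteness of $\|f\|_*$ needed to justify this absorption, since for $p<1$ membership in $\B(p)(\rn)$ does not obviously force $f\in\B(\rn)$. I would secure it by running the argument first on the truncations $f_N=\max(-N,\min(f,N))\in L_\infty(\rn)\subset\B(\rn)$, for which every norm is finite and truncation increases the oscillation only by a controlled factor, and then passing to the limit $N\to\infty$ by Fatou's lemma. Alternatively one can bypass interpolation altogether: Chebyshev's inequality turns $\|f\|_{\B(p)(\rn)}<\infty$ into the uniform weak bound $|\{x\in Q:|f-f_Q|>\la\}|\le\|f\|_{\B(p)(\rn)}^{p}\la^{-p}|Q|$, and the Calder\'on--Zygmund stopping-time iteration that underlies John--Nirenberg upgrades this polynomial decay to the exponential estimate above, from which the case $p=1$, and hence all $p$, follows as before.
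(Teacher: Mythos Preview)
The paper does not supply its own proof of this lemma; it is stated as a citation to \cite{JN} and \cite{BenSharp} and used as a black box. So there is no in-paper argument to compare against.

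Your outline is the standard route and is sound. The $p\ge 1$ part (Jensen one way, John--Nirenberg plus layer--cake the other) is exactly what the cited references yield. For $0<p<1$ your interpolation-and-absorb argument is correct once the absorption is justified; you rightly flag the finiteness of $\|f\|_*$ as the only real issue. The truncation fix is slightly more delicate than you indicate: for $p<1$ the equivalence between the $f_Q$-centred oscillation and $\inf_c\frac1{|Q|}\int_Q|f-c|^p$ is not automatic, so controlling $\|f_N\|_{\B(p)}$ by $\|f\|_{\B(p)}$ via the $1$-Lipschitz property of truncation takes an extra line. Your alternative---Chebyshev to get uniform polynomial decay $|\{x\in Q:|f-f_Q|>\la\}|\le C\la^{-p}|Q|$, then rerun the Calder\'on--Zygmund stopping-time proof of John--Nirenberg to upgrade to exponential decay---sidesteps this entirely and is the cleaner path for $p<1$; it is essentially the argument in \cite{BenSharp}.
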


    A continuously increasing function
    on $[0,\infty]$, say $\Psi: [0,\infty]\rightarrow [0,\infty]$ such
    that $\Psi (0)=0$, $\Psi(1)=1$ and $\Psi(\infty)=\infty$, will be
    referred to as an Orlicz function. If $\Psi$ is an Orlicz function,
    then
    $$
    \Phi (t)=\sup \{ts-\Psi(s); s\in [0,\infty]\}
    $$
    is the complementary Orlicz function to $\Psi$.

    The Orlicz space denoted by $L^{\Psi} = L^{\Psi}(\rn)$ consists
    of all measurable functions $g: \rn \rightarrow \R$ such that
    $$
    \int_{\rn} \Psi \left(\frac{|g(x)|}{\a}\right)dx <\infty
    $$
    for some $\a >0$.

    Let us define the $\Psi$-average of $g$ over a cube $Q$ of $\rn$ by
    \begin{equation*}
    \|g\|_{\Psi,Q}=\inf \left\{\a>0: \frac{1}{|Q|}\int_{Q}
    \Psi\left(\frac{|g(x)|}{\a}\right)dx\leq 1\right\}.
    \end{equation*}

    When $\Psi$ is a Young function, i.e. a convex Orlicz function, the
    quantity
    \begin{equation*}
    \|f\|_{\Psi}=\inf \left\{\a>0: \int_{\rn}
    \Psi\left(\frac{|f(y)|}{\a}\right)dy\leq 1\right\}
    \end{equation*}
    is well known Luxemburg norm in the space $L^{\Psi}$ (see \cite{RR}).

    A Young function $\Psi$ is said to satisfy the $\nabla_2$-condition, denoted $\Psi \in \nabla_2$, if for some $K > 1$
    $$
    \Psi (t) \le \frac{1}{2K} \Psi (Kt) ~\mbox{for all} ~ t > 0.
    $$
    It should be noted that $\Psi (t) \equiv t$ fails the $\nabla_2$-condition.

    \begin{thm}\cite{k}\label{k}
        The Hardy-Littlewood maximal operator is bounded on $L^{\Psi}$, provided that $\Psi \in \nabla_2$.
    \end{thm}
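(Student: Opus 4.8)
The plan is to reduce norm boundedness on $L^{\Psi}$ to a modular inequality, and to reduce that in turn to a one-dimensional integral estimate which is precisely what the $\nabla_2$-condition supplies. First I would record the truncated weak-type $(1,1)$ inequality for $M$: there is a dimensional constant $c$ such that for every $\la>0$,
\[
|\{x\in\rn: Mf(x)>\la\}| \le \frac{c}{\la}\int_{\{|f|>\la/2\}}|f(y)|\,dy.
\]
Writing $\psi=\Psi'$ (which exists a.e. and satisfies $\Psi(s)=\int_0^s\psi$ since $\Psi$ is convex), the layer-cake formula gives $\int_{\rn}\Psi(Mf)=\int_0^\infty \psi(\la)\,|\{Mf>\la\}|\,d\la$. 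Inserting the weak-type bound and applying Fubini (the constraint $|f(x)|>\la/2$ becomes $\la<2|f(x)|$) would yield
\[
\int_{\rn}\Psi(Mf)\,dx \le c\int_{\rn}|f(x)|\left(\int_0^{2|f(x)|}\frac{\psi(\la)}{\la}\,d\la\right)dx.
\]
The genuinely \emph{truncated} weak estimate is essential: the full weak-$(1,1)$ inequality would leave the divergent integral $\int_0^\infty \psi(\la)/\la\,d\la$.

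The heart of the matter is the scalar estimate $\int_0^s\frac{\psi(\la)}{\la}\,d\la \le C\,\frac{\Psi(s)}{s}$ for $s>0$, which I would obtain from $\Psi\in\nabla_2$. Integrating by parts turns the left-hand side into $\frac{\Psi(s)}{s}+\int_0^s\frac{\Psi(\la)}{\la^2}\,d\la$ (the boundary contribution at $0$ is nonnegative and may be discarded in an upper bound), so it suffices to show $\int_0^s \Psi(\la)/\la^2\,d\la \ls \Psi(s)/s$. Iterating the defining inequality $\Psi(t)\le\frac{1}{2K}\Psi(Kt)$ gives $\Psi(s/K^k)\le(2K)^{-k}\Psi(s)$, and summing the resulting geometric series over the intervals $[s/K^{k+1},s/K^{k}]$ controls $\int_0^s\Psi(\la)/\la^2\,d\la$ by a constant multiple of $\Psi(s)/s$. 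Feeding this back into the previous display, I would arrive at the modular inequality $\int_{\rn}\Psi(Mf)\,dx \le C_1\int_{\rn}\Psi(2|f|)\,dx$.

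Finally I would upgrade this modular inequality to the norm bound $\|Mf\|_{\Psi}\ls\|f\|_{\Psi}$. Here the obstacle is the factor $2$ inside $\Psi$, which cannot be absorbed by a $\Delta_2$-argument since $\Delta_2$ is not assumed on $\Psi$. Instead I would combine the positive homogeneity of $M$ with convexity: for $0<\beta\le1$ one has $\Psi(\beta u)\le\beta\Psi(u)$, so for any $\alpha\ge 2$,
\[
\int_{\rn}\Psi\!\left(\frac{Mf}{\alpha}\right)dx = \int_{\rn}\Psi\!\left(M\!\left(\frac{f}{\alpha}\right)\right)dx \le C_1\int_{\rn}\Psi\!\left(\frac{2|f|}{\alpha}\right)dx \le \frac{2C_1}{\alpha}\int_{\rn}\Psi(|f|)\,dx.
\]
Choosing $\alpha=2C_1$ (and $\alpha=2$ if $C_1<1$) makes the last expression at most $\int_{\rn}\Psi(|f|)\,dx$, so whenever $\|f\|_{\Psi}\le1$ we get $\int_{\rn}\Psi(Mf/\alpha)\le1$, that is $\|Mf\|_{\Psi}\le\alpha$; homogeneity then gives $\|Mf\|_{\Psi}\le\alpha\|f\|_{\Psi}$ in general. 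The main conceptual point of the whole argument is recognizing that $\nabla_2$ is exactly the hypothesis that makes the scalar integral estimate hold, while the $\Delta_2$-free homogeneity step at the end is the technical price for not assuming anything further on $\Psi$.
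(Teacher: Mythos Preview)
The paper does not give its own proof of this statement: Theorem~\ref{k} is simply quoted from Kita~\cite{k} and used as a black box. Your proposal is therefore not competing with anything in the paper itself; it is supplying an argument the authors deliberately omitted.

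That said, your argument is correct and is essentially the classical route to this result. The truncated weak-type estimate is obtained in the standard way (split $f$ at level $\la/2$ and use that $M(f\chi_{\{|f|\le\la/2\}})\le\la/2$), and the layer-cake/Fubini step is clean. The scalar inequality $\int_0^s\psi(\la)/\la\,d\la\ls\Psi(s)/s$ is indeed the exact content of $\nabla_2$; your dyadic-in-$K$ estimate of $\int_0^s\Psi(\la)/\la^2\,d\la$ works, since on $[s/K^{k+1},s/K^k]$ one has $\Psi(\la)\le(2K)^{-k}\Psi(s)$ while $\int\la^{-2}\,d\la=K^k(K-1)/s$, and the resulting series $\sum 2^{-k}$ converges. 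The final passage from the modular inequality to the Luxemburg-norm bound via $\Psi(\beta u)\le\beta\Psi(u)$ for $\beta\le1$ is the right device to avoid an unwarranted $\Delta_2$ assumption; you are implicitly using the standard fact that $\|f\|_{\Psi}\le1$ is equivalent to $\int\Psi(|f|)\le1$, which follows from convexity and monotone convergence. One cosmetic remark: in the integration-by-parts step the boundary term at $0$ is $-\lim_{\epsilon\to0}\Psi(\epsilon)/\epsilon\le0$, so ``discarding'' it indeed only increases the right-hand side, exactly as you say.
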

    Combining Theorem \ref{k} and \ref{thm1}, we  obtain the following statement.
    \begin{thm}
        Let $b \in \B(\rn)$ and $\Psi \in \nabla_2$.

        Then the operator $C_b$ is bounded on $L^{\Psi}$, and the inequality
        $$
        \|C_b f\|_{L^{\Psi}} \le c \|b\|_* \|f\|_{L^{\Psi}}
        $$
        holds with constant $c$ independent of $f$.

        Moreover, if $b^- \in L_{\infty}(\rn)$, then the operator $[M,b]$ is
        bounded on $L^{\Psi}$, and the inequality
        $$
        \|[M,b] f\|_{L^{\Psi}} \le c (\|b^+\|_* + \|b^-\|_{\infty})\|f\|_{L^{\Psi}}
        $$
        holds with constant $c$ independent of $f$.
    \end{thm}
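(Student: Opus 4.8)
The plan is to recognize this statement as a direct specialization of the abstract Theorem~\ref{thm1} to the Orlicz space $X = L^{\Psi}$, with the boundedness of $M$ on $L^{\Psi}$ supplied by Theorem~\ref{k}. Thus the entire proof reduces to checking that $L^{\Psi}$, equipped with the Luxemburg norm $\|\cdot\|_{\Psi}$, satisfies the three hypotheses of Theorem~\ref{thm1}: it is a Banach space of measurable functions, it enjoys the lattice property, and $M$ is bounded on it.

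First I would fix the function-space framework. Since $\Psi$ is a Young function (a convex Orlicz function), the Luxemburg norm $\|\cdot\|_{\Psi}$ is a genuine norm and $(L^{\Psi}(\rn), \|\cdot\|_{\Psi})$ is a Banach space of measurable functions on $\rn$; this is classical (see \cite{RR}). Next I would verify the lattice property. Suppose $0 \le g \le f$ a.e.\ on $\rn$. Because $\Psi$ is increasing, for every $\a > 0$ we have $\Psi(|g(x)|/\a) \le \Psi(|f(x)|/\a)$ pointwise, so any $\a$ admissible in the infimum defining $\|f\|_{\Psi}$ is also admissible for $g$. Taking infima yields $\|g\|_{\Psi} \le \|f\|_{\Psi}$, which is the required lattice property (with constant $1$).

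It then remains only to observe that $M$ is bounded on $L^{\Psi}$: this is precisely Theorem~\ref{k}, whose hypothesis $\Psi \in \nabla_2$ is assumed here. With all three hypotheses in hand, I would apply Theorem~\ref{thm1} with $X = L^{\Psi}$. Its first part gives the boundedness of $C_b$ together with the estimate $\|C_b f\|_{L^{\Psi}} \le c\|b\|_* \|f\|_{L^{\Psi}}$ for $b \in \B(\rn)$. Under the additional hypothesis $b^- \in L_{\infty}(\rn)$, its second part gives the boundedness of $[M,b]$ with $\|[M,b]f\|_{L^{\Psi}} \le c(\|b^+\|_* + \|b^-\|_{\infty})\|f\|_{L^{\Psi}}$, which is the second assertion.

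Since the two substantive ingredients are already available as Theorem~\ref{thm1} and Theorem~\ref{k}, there is essentially no hard step here: the only points demanding any care are the standard facts that $L^{\Psi}$ with the Luxemburg norm is a Banach space (which rests on the convexity of the Young function $\Psi$) and that it forms a lattice. Both are routine, so the result follows at once by combination.
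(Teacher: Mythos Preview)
Your proposal is correct and matches the paper's approach exactly: the paper simply states that the theorem follows by combining Theorem~\ref{k} (boundedness of $M$ on $L^{\Psi}$ for $\Psi\in\nabla_2$) with Theorem~\ref{thm1} (the abstract lattice-space result), which is precisely what you do. Your additional verification that $L^{\Psi}$ is a Banach lattice is routine and appropriate.
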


    If $f\in L^{\Psi}(\rn)$, the Orlicz maximal function of $f$ with
    respect to $\Psi$ is defined by setting
    \begin{equation*}
    M_{\Psi}f(x)=\sup_{x\in Q} \|f\|_{\Psi,Q},
    \end{equation*}
    where the supremum is taken over all cubes $Q$ of $\rn$ containing $x$.

    The generalized H\"older's inequality
    \begin{equation}\label{genHolder}
    \frac{1}{|Q|}\int_{Q} |f(y)g(y)|dy \leq \|f\|_{\Phi,Q}
    \|g\|_{\Psi,Q},
    \end{equation}
    where $\Psi$ is the complementary Young function associated to
    $\Phi$, holds.

    The main example that we are going to be using is
    $\Phi(t)=t(1+\log^+ t)$ with maximal function defined by $M_{L(1 +
        \log^+ L)}$. The complementary Young function is given by
    $\Psi(t)\thickapprox e^t$ with the corresponding maximal function
    denoted by $M_{\exp L}$.

    We define the weak  $L(1 + \log^+ L)$-average of $g$ over a cube $Q$
    of $\rn$ analogously by
    \begin{equation*}
    \|g\|_{WL(1 + \log^+ L),Q}=\inf \left\{\a>0:
    \sup_{t>0}\frac{1}{|Q|}\frac{|\{x\in Q: |g(x)|>\a
        t\}|}{\frac{1}{t}\left(1+\log^+ \frac{1}{t}\right)}\leq 1\right\}.
    \end{equation*}

    Let $0<\la <n$. The Zygmund-Morrey spaces $\mathcal{M}_{L(\log L),\la}(\rn) \equiv \mathcal{M}_{L(1 + \log^+ L),\la}(\rn)$ and the weak Zygmund-Morrey spaces $\WM_{L(\log L),\la}(\rn) \equiv \WM_{L(1 + \log^+ L),\la}(\rn)$ are defined as follows:
    \begin{align*}
    \mathcal{M}_{L(1 + \log^+ L),\la}(\rn): = & \{ f \in \mf
    (\rn):\,\|f\|_{\mathcal{M}_{L(1+\log^+
            L),\la}}:=\sup_{Q}|Q|^{\frac{\la}{n}}\|f\|_{L(1 + \log^+ L),Q}<\infty \}, \\
    \WM_{L(1 + \log^+ L),\la}(\rn)  : = &  \{f \in \mf
    (\rn):\,\|f\|_{\WM_{L(1+\log^+
            L),\la}}:=\sup_{Q}|Q|^{\frac{\la}{n}}\|f\|_{WL(1 + \log^+ L),Q}<\infty \},
    \end{align*}
    respectively. Note that $\mathcal{M}_{L(1+\log^+
        L),\la}$ is a special case of Orlicz-Morrey spaces ${\mathcal L}^{\Phi,\phi}$ (with  $\Phi (t) = t (1 + \log^+ t)$ and $\phi (t) = t^{\lambda}$, $t > 0$) defined in \cite[Definitions 2.3]{sst}. As we know, a weak version has not been defined yet in such form.

    \section{Boundedness of maximal commutator and commutator of maximal function in Morrey spaces}\label{sect6}

    In this section we investigate boundedness of maximal commutator and
    commutator of maximal function in Morrey spaces.

    The following theorem is true.
    \begin{thm}\label{thm3.1}
        Let $1<p<\infty$, $0\leq\la\leq n$. The following assertions are
        equivalent:

        {\rm (i)} $b\in \B(\rn)$.

        {\rm (ii)} The operator $C_b$ is bounded on $\mathcal{M}_{p,\lambda
        }$.

    \end{thm}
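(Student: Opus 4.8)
The plan is to prove the two implications separately, with the forward direction (i) $\Rightarrow$ (ii) reducing to the abstract boundedness result already available, and the converse (ii) $\Rightarrow$ (i) being established by testing $C_b$ against characteristic functions of cubes.

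For (i) $\Rightarrow$ (ii), I would invoke Theorem \ref{thm1} with $X=\mathcal M_{p,\la}$. This requires knowing that $\mathcal M_{p,\la}$ is a Banach space enjoying the lattice property and, crucially, that $M$ is bounded on $\mathcal M_{p,\la}$. The lattice property is immediate from the definition, since for $0\le g\le f$ one has $\|g\|_{L_p(B(x,r))}\le\|f\|_{L_p(B(x,r))}$ for every ball, hence $\|g\|_{\mathcal M_{p,\la}}\le\|f\|_{\mathcal M_{p,\la}}$. The boundedness of $M$ is, for $0<\la<n$, exactly the Chiarenza--Frasca estimate \eqref{ChiFr}; the endpoints are covered by the identifications $\mathcal M_{p,n}=L_p$ (so $M$ is bounded for $1<p<\infty$) and $\mathcal M_{p,0}=L_\infty$ (on which $M$ is trivially bounded). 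With these inputs Theorem \ref{thm1} yields $\|C_b f\|_{\mathcal M_{p,\la}}\le c\|b\|_*\|f\|_{\mathcal M_{p,\la}}$, which is (ii). Equivalently, one may combine the pointwise bound \eqref{eq.0002} of Theorem \ref{lem1111111} with the boundedness of $M^2$ on $\mathcal M_{p,\la}$ obtained by iterating \eqref{ChiFr}.

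For (ii) $\Rightarrow$ (i), fix a cube $Q$ and test $C_b$ on $f=\chi_Q$. The key pointwise lower bound is that for $x\in Q$
$$C_b(\chi_Q)(x)\ge\frac{1}{|Q|}\int_Q|b(x)-b(y)|\,dy\ge|b(x)-b_Q|,$$
obtained by restricting the supremum defining $C_b$ to the cube $Q$ itself (legitimate since $x\in Q$) and then applying Jensen's inequality to pull the absolute value outside the average. Averaging over $Q$ and using Hölder's inequality gives
$$\frac{1}{|Q|}\int_Q|b(x)-b_Q|\,dx\le\frac{1}{|Q|}\int_Q C_b(\chi_Q)\le|Q|^{-1/p}\|C_b(\chi_Q)\|_{L_p(Q)}.$$
I would then localize the $L_p$ norm through the Morrey norm: enclosing $Q$ in a ball $B=B(x_Q,r)$ with $r\ap|Q|^{1/n}$, one has $\|C_b(\chi_Q)\|_{L_p(Q)}\le\|C_b(\chi_Q)\|_{L_p(B)}\le r^{(n-\la)/p}\|C_b(\chi_Q)\|_{\mathcal M_{p,\la}}$, and (ii) bounds the last factor by $c\|\chi_Q\|_{\mathcal M_{p,\la}}$.

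The remaining ingredient is the elementary computation $\|\chi_Q\|_{\mathcal M_{p,\la}}\ap|Q|^{\la/(np)}$, which I would verify by splitting the supremum over balls $B(x,r)$ according to whether $r\ls|Q|^{1/n}$ or not and using $|Q\cap B(x,r)|\le\min\{|B(x,r)|,|Q|\}$. Substituting everything back, the powers of $|Q|$ combine as $-\tfrac1p+\tfrac{n-\la}{np}+\tfrac{\la}{np}=0$, so the right-hand side is bounded by a constant multiple of the operator norm of $C_b$, uniformly in $Q$. Taking the supremum over all cubes then gives $\|b\|_*\ls\|C_b\|_{\mathcal M_{p,\la}\to\mathcal M_{p,\la}}<\infty$, i.e. $b\in\B(\rn)$. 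The main obstacle is this converse: the creative part is selecting the right test function and then tracking the exponents so that they cancel exactly, whereas the forward direction is a direct citation. At the endpoint $\la=n$ the converse is already contained in Theorem \ref{Cb}, and at $\la=0$ the same testing argument applies verbatim.
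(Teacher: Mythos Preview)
Your proposal is correct and follows essentially the same approach as the paper: for (i) $\Rightarrow$ (ii) both combine the pointwise bound $C_b f\ls\|b\|_*M^2f$ from Theorem~\ref{lem1111111} with the Chiarenza--Frasca inequality \eqref{ChiFr}, and for (ii) $\Rightarrow$ (i) both test on $f=\chi_Q$ and compute $\|\chi_Q\|_{\mathcal M_{p,\la}}\ap|Q|^{\la/(np)}$. The only cosmetic difference is that the paper uses the constant lower bound $C_b(\chi_Q)(x)\gs\frac{1}{|Q|}\int_Q|b-b_Q|$ on $Q$ (so the Morrey norm of $C_b(\chi_Q)$ is read off directly as that constant times $\|\chi_Q\|_{\mathcal M_{p,\la}}$), whereas you use $C_b(\chi_Q)(x)\ge|b(x)-b_Q|$ followed by H\"older; the exponent bookkeeping is identical.
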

    \begin{proof}
        {${\rm (i)}\Rightarrow {\rm (ii)}$}. Suppose that $b\in\B(\rn)$. By
        Theorem \ref{lem1111111} and inequality \eqref{ChiFr} it follows that
        $C_b$ is bounded in Morrey space $\mathcal{M}_{p,\lambda }$ and the
        following inequality holds:
        $$
        \|C_b(f)\|_{\mathcal{M}_{p,\lambda }}\lesssim \|b\|_{*}\,
        \|f\|_{\mathcal{M}_{p,\lambda }}.
        $$

        {${\rm (ii)}\Rightarrow {\rm (i)}$}. Assume that there exists $c > 0$ such that
        $$
        \|C_b(f)\|_{\mathcal{M}_{p,\lambda}} \le c \|f\|_{\mathcal{M}_{p,\lambda}}
        $$
        for all $f \in \mathcal{M}_{p,\lambda}$.
        Obviously,
        $$
        \|f\|_{\mathcal{M}_{p,\lambda }}\thickapprox \sup_{Q'}
        \left(|Q'|^{\frac{\la-n}{n}}\int_{Q'}|f(y)|^pdy\right)^{\frac{1}{p}}.
        $$
        Let $Q$ be a fixed cube. We consider $f=\chi_Q$. It is easy to
        compute that
        \begin{equation}\label{eq0000005.1}
        \begin{split}
        \|\chi_Q\|_{\mathcal{M}_{p,\lambda }}&\thickapprox \sup_{Q'}
        \left(|Q'|^{\frac{\la-n}{n}}\int_{Q'}\chi_Q(y)dy\right)^{\frac{1}{p}}=\sup_{Q'}\left(|Q'\cap
        Q||Q'|^{\frac{\la-n}{n}} \right)^{\frac{1}{p}} \\
        & =\sup_{Q'\subseteq Q}\left(|Q'||Q'|^{\frac{\la-n}{n}}
        \right)^{\frac{1}{p}}=|Q|^{\frac{\la}{np}}.
        \end{split}
        \end{equation}

        On the other hand, since
        $$
        C_b(\chi_Q)(x)\gtrsim \frac{1}{|Q|}\int_{Q}|b(y)-b_Q|dy \quad
        \mbox{for all} \quad x\in Q.
        $$
        then
        \begin{equation}\label{eq0000005.2}
        \begin{split}
        \|C_b(\chi_Q)\|_{\mathcal{M}_{p,\lambda }} & \thickapprox \sup_{Q'}
        \left(|Q'|^{\frac{\la-n}{n}}\int_{Q'}|C_b(\chi_Q)(y)|^pdy\right)^{\frac{1}{p}}
        \\
        &\gtrsim |Q|^{\frac{\la}{np}}\frac{1}{|Q|}\int_{Q}|b(y)-b_Q|dy.
        \end{split}
        \end{equation}
        Since by assumption
        $$
        \|C_b(\chi_Q)\|_{\mathcal{M}_{p,\lambda }}\lesssim
        \|\chi_Q\|_{\mathcal{M}_{p,\lambda }},
        $$
        by \eqref{eq0000005.1} and \eqref{eq0000005.2}, we get that
        $$
        \frac{1}{|Q|}\int_{Q}|b(y)-b_Q|dy \lesssim c.
        $$
    \end{proof}

    Combining Theorem \ref{lem1111111} with inequality \eqref{gogmus}, we get the following statement.
    \begin{thm}\label{thm3.11}
        Let $0 < \la < n$. Assume that $b\in \B(\rn)$. Then the operator $C_b$ is bounded on $\mathcal{M}_{1,\lambda}$ for radially decreasing functions.
    \end{thm}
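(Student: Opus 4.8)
The plan is to combine the pointwise domination of $C_b$ by the iterated maximal operator $M^2$ with the boundedness of $M$ on $\M_{1,\la}$ for radially decreasing functions, exactly as the remark preceding the statement suggests. First I would invoke inequality \eqref{eq.0002} of Theorem \ref{lem1111111}, which gives $0 \le C_b(f)(x) \le c\|b\|_* M^2 f(x)$ for every $x \in \rn$ and every $f \in \Lloc$; since a radially decreasing function in $\M_{1,\la}$ certainly lies in $\Lloc$, this bound is available. Because the functional $\|\cdot\|_{\M_{1,\la}}$ is monotone (if $0 \le g \le h$ then $\|g\|_{\M_{1,\la}} \le \|h\|_{\M_{1,\la}}$, directly from its definition as a supremum of weighted averages), it then suffices to establish
\begin{equation*}
\|M^2 f\|_{\M_{1,\la}} \ls \|f\|_{\M_{1,\la}}, \qquad f \in \mf^{\rad,\dn}.
\end{equation*}

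The core of the argument is that, although \eqref{gogmus} only asserts boundedness of $M$ on radially decreasing inputs, one can still iterate it. For $f \in \mf^{\rad,\dn}$ I would use the equivalence $Mf \ap Hf$ recorded in Section \ref{sect2}, together with the fact that $Hf \in \mf^{\rad,\dn}$ whenever $f \in \mf^{\rad,\dn}$. Thus $Mf$, while not literally radially decreasing, is pointwise comparable to the radially decreasing function $Hf$. Since $M$ is monotone, $M(Mf) \ap M(Hf)$ pointwise, and since the Morrey norm is monotone this yields $\|M^2 f\|_{\M_{1,\la}} \ap \|M(Hf)\|_{\M_{1,\la}}$.

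Now both remaining applications of $M$ have genuinely radially decreasing inputs, so \eqref{gogmus} applies directly. Applying it to $Hf \in \mf^{\rad,\dn}$ gives $\|M(Hf)\|_{\M_{1,\la}} \ls \|Hf\|_{\M_{1,\la}} \ap \|Mf\|_{\M_{1,\la}}$, and applying it to $f \in \mf^{\rad,\dn}$ gives $\|Mf\|_{\M_{1,\la}} \ls \|f\|_{\M_{1,\la}}$. Chaining these estimates produces $\|M^2 f\|_{\M_{1,\la}} \ls \|f\|_{\M_{1,\la}}$, and combining with the pointwise bound and lattice property from the first paragraph gives $\|C_b f\|_{\M_{1,\la}} \ls \|b\|_* \|f\|_{\M_{1,\la}}$.

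The step I expect to be the only real obstacle is the middle one: \eqref{gogmus} cannot be applied twice directly to $M^2 = M \circ M$, because $Mf$ need not belong to $\mf^{\rad,\dn}$. The device that removes this obstacle is to replace $Mf$ by the honestly radially decreasing $Hf$ via $Mf \ap Hf$, exploiting the monotonicity of both $M$ and $\|\cdot\|_{\M_{1,\la}}$ to pass the equivalence through the outer maximal operator and the norm. Once this reduction is in place, the remaining estimates are routine applications of the results already available.
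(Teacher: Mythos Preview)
Your proposal is correct and follows the same route the paper indicates: it combines the pointwise bound \eqref{eq.0002} from Theorem~\ref{lem1111111} with two applications of \eqref{gogmus}. In fact you have been more careful than the paper, which simply writes ``Combining Theorem~\ref{lem1111111} with inequality \eqref{gogmus}'' and leaves the iteration implicit; your observation that $Mf$ need not literally lie in $\mf^{\rad,\dn}$, and your fix via the equivalence $Mf\ap Hf$ with $Hf\in\mf^{\rad,\dn}$ (both recorded in Section~\ref{sect2}), is exactly the detail needed to make the two-fold application of \eqref{gogmus} rigorous.
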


    The following theorem was proved in \cite{x}.
    \begin{thm}\label{thm4.5}
        Let $1<p<\infty$, $0\le\la \le n$. Suppose that $b$ be a real
        valued, locally integrable function in $\rn$. The following
        assertions are equivalent:

        {\rm (i)} $b$ is in $\B(\rn)$ such that $b^-\in L_{\infty}(\rn)$.

        {\rm (ii)} The commutator $[M,b]$ is bounded in
        $\mathcal{M}_{p,\lambda }$.
    \end{thm}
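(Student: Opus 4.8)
The plan is to follow the pattern of Theorem~\ref{thm3.1}: use the pointwise domination of $[M,b]$ by $M^2$ for the sufficiency, and test against characteristic functions of cubes for the necessity. Throughout I would work with the equivalent cube form of the Morrey norm recorded in \eqref{eq0000005.1}, which for a fixed cube $Q$ yields both $\big(\int_Q|f|^p\big)^{1/p}\le |Q|^{\frac{n-\la}{np}}\|f\|_{\mathcal{M}_{p,\lambda}}$ and $\|\chi_Q\|_{\mathcal{M}_{p,\lambda}}\approx|Q|^{\frac{\la}{np}}$. For ${\rm(i)}\Rightarrow{\rm(ii)}$, assuming $b\in\B(\rn)$ and $b^-\in L_\infty(\rn)$, I would invoke \eqref{eqPointwise3}, $|[M,b]f(x)|\le c(\|b^+\|_*+\|b^-\|_\infty)M^2f(x)$; since $M$ is bounded on $\mathcal{M}_{p,\lambda}$ by \eqref{ChiFr} for $1<p<\infty$ (the endpoints $\la=0,n$ being $L_\infty$ and $L_p$), iterating bounds $M^2$ on $\mathcal{M}_{p,\lambda}$, and the lattice property of the Morrey norm then gives the boundedness of $[M,b]$. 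This is precisely Theorem~\ref{thm1} applied with $X=\mathcal{M}_{p,\lambda}$.

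For the necessity ${\rm(ii)}\Rightarrow{\rm(i)}$ everything hinges on one elementary observation: for any cube $Q$ and $x\in Q$, taking the competitor cube $Q$ itself in the supremum defining $M$ gives $M(b\chi_Q)(x)\ge\frac1{|Q|}\int_Q|b|\ge b_Q$, so that, since $M\chi_Q(x)=1$ on $Q$,
\[
[M,b]\chi_Q(x)=M(b\chi_Q)(x)-b(x)\ge b_Q-b(x)\qquad(x\in Q).
\]
To extract $b\in\B(\rn)$ I would restrict to $\{x\in Q:\,b(x)\le b_Q\}$, where the right-hand side is nonnegative and hence $|[M,b]\chi_Q(x)|\ge|b(x)-b_Q|$. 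Combining the identity $\int_Q|b-b_Q|=2\int_{Q\cap\{b\le b_Q\}}(b_Q-b)$ with H\"older's inequality and the Morrey bounds above gives
\[
\frac1{|Q|}\int_Q|b-b_Q|\le\frac{2}{|Q|^{1/p}}\Big(\int_Q|[M,b]\chi_Q|^p\Big)^{1/p}\ls\frac{|Q|^{\frac{n-\la}{np}}}{|Q|^{1/p}}\,\|\chi_Q\|_{\mathcal{M}_{p,\lambda}}\approx 1,
\]
where I used the assumed boundedness $\|[M,b]\chi_Q\|_{\mathcal{M}_{p,\lambda}}\ls\|\chi_Q\|_{\mathcal{M}_{p,\lambda}}$ together with $\|\chi_Q\|_{\mathcal{M}_{p,\lambda}}\approx|Q|^{\frac{\la}{np}}$, and observed that the powers of $|Q|$ cancel. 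Thus $\|b\|_*\ls c$.

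For $b^-\in L_\infty(\rn)$ I would use the same lower bound in the complementary regime: on $\{x\in Q:\,b(x)<0\}$ one has $b_Q-b(x)\ge b^-(x)$, so $|[M,b]\chi_Q(x)|\ge b^-(x)$ on all of $Q$ (trivially where $b\ge0$). Repeating the H\"older--Morrey estimate yields $\big(\frac1{|Q|}\int_Q(b^-)^p\big)^{1/p}\ls c$ uniformly in $Q$, and shrinking $Q$ to a Lebesgue point of $(b^-)^p$ gives $\|b^-\|_\infty\ls c$. The main obstacle to anticipate is that $[M,b]$ is neither positive nor sublinear, so one cannot pass the splitting $b=b^+-b^-$ through the operator as for a linear commutator; the device that circumvents this is precisely the one-sided pointwise bound $[M,b]\chi_Q\ge b_Q-b$, valid on the entire cube for every real $b$, which simultaneously feeds the BMO oscillation estimate and the $L_\infty$ bound on $b^-$ from the single test family $\{\chi_Q\}$.
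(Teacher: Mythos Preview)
The paper does not supply its own proof of Theorem~\ref{thm4.5}; it is quoted from \cite{x}, with only the implication ${\rm(i)}\Rightarrow{\rm(ii)}$ rederived in the subsequent Remark via Theorem~\ref{lem1111111} and \eqref{ChiFr}. Your argument for ${\rm(i)}\Rightarrow{\rm(ii)}$ is exactly that one, and your approach to the converse via testing on $\chi_Q$ and the one--sided bound $[M,b]\chi_Q\ge b_Q-b$ on $Q$ is sound; the BMO conclusion is correctly obtained.

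There is, however, a genuine slip in the step for $b^-\in L_\infty(\rn)$. You claim that on $\{x\in Q:\,b(x)<0\}$ one has $b_Q-b(x)\ge b^-(x)$. Since $b^-(x)=-b(x)$ there, this amounts to $b_Q\ge 0$, which need not hold for an arbitrary cube (take $b\equiv -1$: then $b_Q-b=0$ while $b^-=1$). The repair is immediate and already implicit in your own computation: you recorded $M(b\chi_Q)(x)\ge |b|_Q$ before weakening it to $\ge b_Q$, so in fact
\[
[M,b]\chi_Q(x)=M(b\chi_Q)(x)-b(x)\ge |b|_Q-b(x)\qquad(x\in Q),
\]
and since $|b|_Q\ge 0$ this gives $[M,b]\chi_Q(x)\ge -b(x)=b^-(x)$ whenever $b(x)<0$. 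Alternatively, Lebesgue differentiation yields $M(b\chi_Q)(x)\ge|b(x)|$ a.e.\ on $Q$, whence $[M,b]\chi_Q(x)\ge|b(x)|-b(x)=2b^-(x)$ directly on all of $Q$. With either correction the remaining H\"older--Morrey estimate and the Lebesgue--point argument go through unchanged.
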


    \begin{rem}
        {${\rm (i)}\Rightarrow {\rm (ii)}$}. Assume that $b$ is in $\B(\rn)$
        such that $b^-\in L_{\infty}(\rn)$. By Theorem \ref{lem1111111} and
        inequality \eqref{ChiFr} it follows that $[M,b]$ is bounded in Morrey
        space $\mathcal{M}_{p,\lambda }$ and the following inequality holds:
        $$
        \|[M,b]f\|_{\mathcal{M}_{p,\lambda }}\lesssim
        \left(\|b^+\|_{*}+\|b^-\|_{\infty}\right)\,
        \|f\|_{\mathcal{M}_{p,\lambda }}.
        $$
    \end{rem}

    Combining Theorem \ref{lem1111111} with inequality \eqref{gogmus}, we obtain the following statement.
    \begin{thm}\label{thm4.51}
        Let $0 < \la < n$. Suppose that $b$ is in $\B(\rn)$ such that $b^-\in L_{\infty}(\rn)$. Then  $[M,b]$ is bounded on $\mathcal{M}_{1,\lambda}$ for radially decreasing functions.
    \end{thm}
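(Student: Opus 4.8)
The plan is to obtain the bound by combining the pointwise domination from Theorem \ref{lem1111111} with the radial endpoint estimate \eqref{gogmus} for $M$, applied twice. Since $b \in \B(\rn)$ with $b^- \in L_\infty(\rn)$, inequality \eqref{eqPointwise3} furnishes the pointwise control $|[M,b]f(x)| \le c\,(\|b^+\|_* + \|b^-\|_\infty)\,M^2 f(x)$ for all $x \in \rn$. The norm of $\M_{1,\la}$ has the lattice property (being a supremum of $L_1$-averages of $|f|$), so this at once gives $\|[M,b]f\|_{\M_{1,\la}} \le c\,(\|b^+\|_* + \|b^-\|_\infty)\,\|M^2 f\|_{\M_{1,\la}}$. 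The whole task then reduces to proving $\|M^2 f\|_{\M_{1,\la}} \ls \|f\|_{\M_{1,\la}}$ for $f \in \mf^{\rad,\dn}$.

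The \emph{main obstacle} is that \eqref{gogmus} is available only for radially decreasing inputs: although $f$ is radially decreasing, the intermediate function $Mf$ need not be, so one cannot feed $Mf$ directly into \eqref{gogmus} to handle the outer maximal operator. To circumvent this I would invoke the equivalence $Mf \ap Hf$ recorded in Section \ref{sect2}, valid for $f \in \mf^{\rad,\dn}$, together with the fact that $Hf \in \mf^{\rad,\dn}$ whenever $f \in \mf^{\rad,\dn}$. Because $M$ is monotone and positively homogeneous, the pointwise equivalence $Mf \ap Hf$ upgrades to $M^2 f = M(Mf) \ap M(Hf)$, which replaces the non-radial inner function by the genuinely radially decreasing $Hf$.

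With this substitution the estimate closes cleanly. Applying \eqref{gogmus} to $Hf \in \mf^{\rad,\dn}$ yields $\|M(Hf)\|_{\M_{1,\la}} \ls \|Hf\|_{\M_{1,\la}} \ap \|Mf\|_{\M_{1,\la}}$, the last step again by $Mf \ap Hf$. Applying \eqref{gogmus} a second time, now to $f$ itself, gives $\|Mf\|_{\M_{1,\la}} \ls \|f\|_{\M_{1,\la}}$. Chaining these and recalling $M^2 f \ap M(Hf)$ produces $\|M^2 f\|_{\M_{1,\la}} \ls \|f\|_{\M_{1,\la}}$, and combining with the reduction above delivers the asserted inequality with constant $c\,(\|b^+\|_* + \|b^-\|_\infty)$. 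The only points needing care are the transfer of the pointwise equivalence through $M$ via monotonicity and the verification that $Hf$ remains radially decreasing, both of which are routine.
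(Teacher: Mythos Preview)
Your proposal is correct and follows essentially the same approach as the paper, which simply states that the theorem is obtained by ``combining Theorem \ref{lem1111111} with inequality \eqref{gogmus}''. You have in fact filled in the one nontrivial detail that the paper leaves implicit: to iterate \eqref{gogmus} one must know that the intermediate function is again in $\mf^{\rad,\dn}$, and your use of the equivalence $Mf \ap Hf$ together with $Hf \in \mf^{\rad,\dn}$ (both recorded in Section~\ref{sect2}) is exactly the mechanism the paper has in mind for this purpose.
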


    \section{Some auxillary results}\label{sect7}

    To prove the theorems in the next sections we need the following results.
    \begin{thm}\label{thm6.3}
        Let $0\leq \a < n$. Then
        \begin{equation*}
        \begin{split}
        M_{\a}(Mf)(x) = \sup_{Q \ni x}|Q|^{\frac{\a-n}{n}}\int_{Q}Mf &
        \thickapprox
        \sup_{Q \ni x}|Q|^{\frac{\a}{n}}\|f\|_{L(1 + \log^+ L),Q} \\
        &\thickapprox \sup_{Q \ni x}|Q|^{\frac{\a-n}{n}}\int_{Q}|f|\left(1+\log^+
        \frac{|f|}{|f|_{Q}}\right)
        \end{split}
        \end{equation*}
        holds for all $f\in\Lloc$.
    \end{thm}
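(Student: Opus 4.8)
The first identity is merely the definition of $M_\a$ applied to the nonnegative function $Mf$, so the content lies in the two asymptotic equivalences. Write
\[
A(x)=\sup_{Q\ni x}|Q|^{\frac{\a-n}{n}}\int_Q Mf,\qquad B(x)=\sup_{Q\ni x}|Q|^{\frac{\a}{n}}\|f\|_{L(1+\log^+L),Q},
\]
\[
C(x)=\sup_{Q\ni x}|Q|^{\frac{\a-n}{n}}\int_Q |f|\Big(1+\log^+\tfrac{|f|}{|f|_Q}\Big).
\]
The plan is to obtain $B\approx C$ by a cube-by-cube comparison, and $A\approx B$ by a localization argument resting on the classical local $L\log L$ estimate for $M$.

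For $B\approx C$ it suffices to prove, for every cube $Q$, the per-cube relation $\|f\|_{L(1+\log^+L),Q}\approx\frac{1}{|Q|}\int_Q|f|(1+\log^+(|f|/|f|_Q))$, since multiplying by $|Q|^{\a/n}$ and taking the supremum over $Q\ni x$ turns the left side into $B(x)$ and the right side into $C(x)$. This is the standard description of the Luxemburg average for $\Phi(t)=t(1+\log^+t)$ in terms of the normalized $L\log L$ integral; I would prove it by testing the defining condition $\frac1{|Q|}\int_Q\Phi(|f|/s)\le1$ at $s$ equal to a fixed multiple of the right-hand side, using $\Phi(t)\ge t$ to control the $L^1$ part and $\log^+(st)\le\log^+s+\log^+t$ to split the logarithm at the level $|f|_Q$.

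For the lower bound $A\gtrsim B$ I fix a cube $Q\ni x$ and combine the pointwise bound $Mf\ge M(f\chi_Q)$ with Stein's local lower estimate $\frac1{|Q|}\int_Q M(f\chi_Q)\gtrsim\|f\|_{L(1+\log^+L),Q}$. Multiplying by $|Q|^{\a/n}$ gives $|Q|^{\frac{\a-n}{n}}\int_Q Mf\gtrsim|Q|^{\a/n}\|f\|_{L(1+\log^+L),Q}$, and the supremum over $Q\ni x$ yields $A(x)\gtrsim B(x)$.

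The substantive direction is $A\lesssim B$. Fixing $Q\ni x$, I split $f=f\chi_{2Q}+f\chi_{(2Q)^c}$ and use $Mf\le M(f\chi_{2Q})+M(f\chi_{(2Q)^c})$. For the local term, Stein's local upper estimate gives $\frac1{|2Q|}\int_{2Q}M(f\chi_{2Q})\lesssim\|f\|_{L(1+\log^+L),2Q}$; here one also checks that cubes in the definition of $M$ \emph{not} contained in $2Q$ contribute at most $|f|_{2Q}\le\|f\|_{L(1+\log^+L),2Q}$, so passing from the local to the global maximal operator is free, whence $|Q|^{\frac{\a-n}{n}}\int_Q M(f\chi_{2Q})\lesssim|2Q|^{\a/n}\|f\|_{L(1+\log^+L),2Q}\le B(x)$. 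For the tail term the key geometric fact is that any cube $R\ni y$ with $y\in Q$ meeting $(2Q)^c$ has $\ell(R)\gtrsim\ell(Q)$; enlarging such an $R$ to a cube $\widetilde R\ni x$ of comparable size containing $Q$, one gets $\frac1{|R|}\int_R|f|\chi_{(2Q)^c}\lesssim|f|_{\widetilde R}\le\|f\|_{L(1+\log^+L),\widetilde R}\le|\widetilde R|^{-\a/n}B(x)\lesssim|Q|^{-\a/n}B(x)$, uniformly in $R$, so $M(f\chi_{(2Q)^c})\lesssim|Q|^{-\a/n}B(x)$ on $Q$ and $|Q|^{\frac{\a-n}{n}}\int_Q M(f\chi_{(2Q)^c})\lesssim B(x)$ after the exponents cancel. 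Taking the supremum over $Q\ni x$ gives $A\lesssim B$. I expect the tail to be the main obstacle: one must verify the lower bound on $\ell(R)$, bound the averages uniformly over all admissible $R$, and — crucially for $\a>0$ — exploit the decay $|\widetilde R|^{-\a/n}\lesssim|Q|^{-\a/n}$ coming from $|R|\gtrsim|Q|$ to absorb the tail into $B(x)$; the bookkeeping with dilated cubes and the passage between local and global maximal operators in the first term are the delicate points.
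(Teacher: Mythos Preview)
Your proposal is correct and follows essentially the same approach as the paper: the per-cube equivalence $\|f\|_{L(1+\log^+L),Q}\approx\frac{1}{|Q|}\int_Q|f|(1+\log^+\tfrac{|f|}{|f|_Q})$ is the paper's Lemma~\ref{lem90099009}, the lower bound $A\gtrsim B$ comes from Stein's local estimate (Lemma~\ref{stein}), and the upper bound $A\lesssim B$ is obtained by the same local/tail decomposition and geometric observation as in Lemma~\ref{lem6.1}. The only cosmetic differences are that you split at $2Q$ rather than $3Q$ and absorb the tail directly into $B(x)$ rather than passing through $\sup_{Q\subset Q'}\|f\|_{L(1+\log^+L),Q'}$.
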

    The statement of Theorem \ref{thm6.3} follows by the following lemmas.

\begin{lem}\label{lem6.1}
	The inequality
	\begin{equation}
	\frac{1}{|Q|}\int_{Q}Mf(y)dy \ls \sup_{Q\subset Q'}\|f\|_{L(1 +
		\log^+ L),Q'}
	\end{equation}
	holds for all $f\in \Lloc$ with positive constant independent of
	$f$ and $Q$.
\end{lem}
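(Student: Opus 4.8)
The plan is to fix the cube $Q$ and, for each $x\in Q$, split the cubes competing in $Mf(x)$ according to their size relative to $Q$. Writing $3Q$ for the cube concentric with $Q$ of tripled side length, I would first establish the pointwise bound
$$
Mf(x)\ls M(f\chi_{3Q})(x)+\sup_{Q\subset Q'}\|f\|_{L(1 + \log^+ L),Q'},\qquad x\in Q.
$$
Indeed, if $R\ni x$ is a cube whose side length is at most that of $Q$, then $R\subset 3Q$, so such cubes only see $f\chi_{3Q}$ and contribute at most $M(f\chi_{3Q})(x)$. If instead $R\ni x$ has side length larger than that of $Q$, then the concentric dilate $Q^*\supset Q\cup R$ with $\ell(Q^*)=\ell(Q)+2\ell(R)$ satisfies $|Q^*|\le 3^n|R|$, whence $\frac{1}{|R|}\int_R|f|\le 3^n\frac{1}{|Q^*|}\int_{Q^*}|f|$. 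Since $\Phi(t)=t(1+\log^+t)\ge t$, the elementary comparison $\frac{1}{|Q^*|}\int_{Q^*}|f|\le\|f\|_{L(1 + \log^+ L),Q^*}$, together with $Q\subset Q^*$, bounds this uniformly by the stated supremum; so the global contribution is in fact pointwise bounded on $Q$.

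Taking averages over $Q$, the global term is already controlled, and it remains to estimate $\frac{1}{|Q|}\int_Q M(f\chi_{3Q})$. Here I would invoke the classical local $L\log L$ estimate for the Hardy--Littlewood maximal operator (Stein's theorem): for any cube $S$,
$$
\frac{1}{|S|}\int_S M(f\chi_S)(y)\,dy\ls\|f\|_{L(1 + \log^+ L),S}.
$$
Applying this with $S=3Q$ and using $|Q|\approx|3Q|$ gives
$$
\frac{1}{|Q|}\int_Q M(f\chi_{3Q})\le\frac{3^n}{|3Q|}\int_{3Q}M(f\chi_{3Q})\ls\|f\|_{L(1 + \log^+ L),3Q}\le\sup_{Q\subset Q'}\|f\|_{L(1 + \log^+ L),Q'},
$$
since $3Q$ is itself one of the competing cubes $Q'\supset Q$. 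Combining the two contributions yields the claim.

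The main point---and the only place where the logarithmic Zygmund scale is essential---is the local estimate for $M(f\chi_{3Q})$: the plain $L^1$ average of the local maximal function is \emph{not} controlled by the $L^1$ average of $f$, and the extra $1+\log^+$ factor is precisely what remedies this. If not quoting it directly, I would derive it from the weak-$(1,1)$ distributional bound $|\{y\in S:M(f\chi_S)(y)>\la\}|\ls\la^{-1}\int_{\{|f|>\la/2\}}|f|$ integrated via the layer-cake formula, which produces the $\int_S|f|\log^+|f|$ term and hence the Luxemburg norm after normalization. By contrast, all the size comparisons between $Q$, $3Q$, $Q^*$ and $R$ are routine, requiring only that a bounded dilation changes the measure of a cube by a bounded factor.
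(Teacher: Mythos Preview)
Your proof is correct and follows essentially the same route as the paper: split the contribution to $Mf$ on $Q$ into a local part governed by $f\chi_{3Q}$ and a global part handled by the geometric observation that large cubes through a point of $Q$ are engulfed by cubes $Q'\supset Q$ of comparable size, then control the local part via the Stein/P\'erez $L\log L$ estimate $\frac{1}{|S|}\int_S M(f\chi_S)\ls\|f\|_{L(1+\log^+L),S}$. The only cosmetic difference is that the paper decomposes the \emph{function} as $f=f\chi_{3Q}+f\chi_{(3Q)^c}$ rather than sorting the competing cubes by side length, but the two estimates produced are the same.
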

\begin{proof}
	Let $Q$ be a cube in $\rn$ and $f=f_1+f_2$, where $f_1=f\chi_{3Q}$.
	Then
	\begin{equation}\label{eq434565}
	\frac{1}{|Q|}\int_{Q}Mf(y)dy \leq
	\frac{1}{|Q|}\int_{Q}Mf_1(y)dy+\frac{1}{|Q|}\int_{Q}Mf_2(y)dy.
	\end{equation}
	We recall simple geometric observation: for a fixed point $x \in Q$, if a cube $Q'$ satisfies $Q' \ni x$ and $Q' \cap (3Q)^c \neq \emptyset$, then $Q \subset 3 Q'$. Hence
	$$
	Mf_2 (x) = \sup_{Q' \ni x} \frac{1}{|Q'|}\int_{Q'}|f_2(y)|dy \le \sup_{Q\subset 3Q'}\frac{1}{|Q'|}\int_{Q'}|f(y)|dy.
	$$
	Consequently, we have that
	\begin{equation}\label{eq434596}
	\frac{1}{|Q|}\int_{Q}Mf_2(y)dy \lesssim \sup_{Q\subset
		Q'}\frac{1}{|Q'|}\int_{Q'}|f(y)|dy.
	\end{equation}
	Since for any cube $Q'$
	$$
	\frac{1}{|Q'|}\int_{Q'}|f(y)|dy\leq \|f\|_{L(1 + \log^+ L),Q'},
	$$
	we get
	\begin{equation}\label{eq996677}
	\frac{1}{|Q|}\int_{Q}Mf_2(y)dy \lesssim \sup_{Q\subset Q'}\|f\|_{L(1
		+ \log^+ L),Q'}.
	\end{equation}
	
	On the other hand
	$$
	\frac{1}{|Q|}\int_{Q}Mf(y)dy \ls \|f\|_{L(1 + \log^+ L),Q}
	$$
	for all $f$ such that $\supp f \subset Q$ (see \cite[p. 174]{CPer}).
	Thus
	\begin{equation}\label{eq996633}
	\frac{1}{|Q|}\int_{Q}Mf_1(y)dy\lesssim
	\frac{1}{|3Q|}\int_{3Q}Mf_1(y)dy \lesssim \|f\|_{L(1 + \log^+
		L),3Q}.
	\end{equation}
	
	From \eqref{eq434565}, \eqref{eq996677} and \eqref{eq996633}, it follows
	that
	\begin{equation}\label{eq44005500}
	\begin{split}
	\!\!\frac{1}{|Q|}\int_{Q}Mf(y)dy \lesssim \sup_{Q\subset
		Q'}\|f\|_{L(1 + \log^+ L),Q'} + \|f\|_{L(1 + \log^+ L),3Q} \lesssim \sup_{Q\subset Q'}\|f\|_{L(1 + \log^+ L),Q'}.
	\end{split}
	\end{equation}
\end{proof}

We recall the following statement (see, for instance, \cite[p. 175]{IwMar}). For the completeness we give the proof.  
\begin{lem}\label{stein}
Note that the estimation
$$
\int_Q M(f \chi_Q) \ap \int_Q |f| \,\bigg( 1 + \log^+ \frac{|f|}{|f|_Q}\bigg),
$$
holds for all $f \in L_1^{\loc}(\rn)$ with positive constants independent of $f$ and $Q$.
\end{lem}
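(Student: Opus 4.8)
The plan is to reduce to a normalized configuration and then compare the two sides through the layer-cake (distribution function) formula, invoking the weak type $(1,1)$ bound for $M$ on the upper side and the Calder\'on--Zygmund decomposition on the lower side. Both sides are positively homogeneous of degree one in $f$, while the ratio $|f|/|f|_Q$ is invariant under $f \mapsto cf$; dividing through by $|f|_Q$ I may therefore assume $|f|_Q = \frac{1}{|Q|}\int_Q |f| = 1$, that is $\int_Q |f| = |Q|$. Setting $g := f\chi_Q$ (so that $\supp g \subset Q$ and $|g| = |f|$ on $Q$), the assertion reduces to
\[
\int_Q Mg \ap |Q| + \int_Q |f|\,\log^+|f|.
\]

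For the upper bound I would write $\int_Q Mg = \int_0^\infty |\{x \in Q : Mg(x) > t\}|\,dt$ and split at $t = 1$. The range $0 < t \le 1$ contributes at most $|Q| = \int_Q|f|$. On the range $t > 1$ I would use the refined weak-type inequality $|\{Mg > t\}| \ls \frac{1}{t}\int_{\{|g| > t/2\}}|g|$, which follows by decomposing $g = g\chi_{\{|g|>t/2\}} + g\chi_{\{|g|\le t/2\}}$: the second piece has $Mg \le t/2$ pointwise, so $\{Mg > t\} \subset \{M(g\chi_{\{|g|>t/2\}}) > t/2\}$, and the ordinary weak $(1,1)$ estimate finishes it. Integrating this in $t$ and applying Fubini, the inner integral in $t$ runs over $(1, 2|g(x)|)$ and produces a factor $\log(2|g(x)|)$, giving $\int_{\{|g|>1/2\}}|g|(\log 2 + \log|g|) \ls \int_Q|f|(1 + \log^+|f|)$.

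For the lower bound, $Mg \ge |g| = |f|$ a.e. on $Q$ already yields $\int_Q Mg \ge \int_Q|f| = |Q|$. To recover the logarithmic term I would, for each fixed $t \ge 1$, apply the Calder\'on--Zygmund decomposition of $g$ inside $Q$ at height $t$, producing pairwise disjoint maximal dyadic subcubes $Q_j^t \subset Q$ with $t < |g|_{Q_j^t} \le 2^n t$ and $|g| \le t$ a.e. off $\bigcup_j Q_j^t$. Since $Mg > t$ on each $Q_j^t$ and $\{|g| > t\} \subset \bigcup_j Q_j^t$ up to a null set, one gets $|\{Mg > t\}\cap Q| \ge \sum_j |Q_j^t| \ge \frac{1}{2^n t}\int_{\{|g|>t\}}|g|$. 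Integrating over $t \in (1,\infty)$ and using Fubini (the inner $t$-integral now runs over $(1,|g(x)|)$) gives $\int_Q Mg \gs \int_{\{|g|>1\}}|g|\log|g| = \int_Q|f|\log^+|f|$. Adding the two lower estimates and undoing the normalization completes the proof.

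I expect the main obstacle to be bookkeeping rather than a deep idea: one must keep straight the passage between the global maximal function $M(f\chi_Q)$ and the local dyadic Calder\'on--Zygmund cubes, verifying both the inclusion $\bigcup_j Q_j^t \subset \{Mg>t\}\cap Q$ and the reverse-direction containment $\{|g|>t\}\subset\bigcup_j Q_j^t$ furnished by the maximality of the $Q_j^t$. One must also track the normalization carefully so that the endpoints of the Fubini integration in $t$ reassemble exactly into $\log^+(|f|/|f|_Q)$ after the rescaling is undone.
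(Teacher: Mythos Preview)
Your proof is correct and follows essentially the same route as the paper: both arguments use the layer-cake representation of $\int_Q M(f\chi_Q)$, split at the level $|f|_Q$, and control the distribution function of $M(f\chi_Q)$ from above and below by quantities of the form $\tfrac{1}{t}\int_{\{|f|>ct\}}|f|$. The only cosmetic difference is that the paper invokes these two-sided weak-type bounds as a known result of Stein, while you rederive them (the upper bound via truncation plus weak $(1,1)$, the lower bound via the Calder\'on--Zygmund stopping-time cubes) and normalize $|f|_Q=1$ at the outset.
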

    \begin{proof}
    	Let $Q$ be a cube in $\rn$. We are going to use weak type estimates
    	(see \cite{stein1969}, for instance): there exist positive constants
    	$c_1 <1$  and $c_2 > 1$ such that for every $f\in \Lloc$ and for
    	every $t>1/|Q|\int_{Q}|f|$ we have
    	$$
    	c_1 \int_{\{x\in Q: |f(x)|>t\}}\frac{|f(x)|}{t}\,dx \le |\{x\in Q:
    	M(f\chi_Q)(x)>t\}| \le c_2  \int_{\{x\in Q:
    		|f(x)|>t/2\}}\frac{|f(x)|}{t}\,dx.
    	$$
    	We have that
    	\begin{align*}
    	\int_{Q}M(f\chi_Q) & = \int_0^{\infty}|\{x\in Q: M(f\chi_Q)(x)>\la\}|d\la \\
    	& = \int_0^{|f|_Q}|\{x\in Q: M(f\chi_Q)(x)>\la\}|d\la \\
    	&+ \int_{|f|_Q}^{\infty}|\{x\in Q: M(f\chi_Q)(x)>\la\}|d\la\\
    	&=|Q| |f|_Q + \int_{|f|_Q}^{\infty}|\{x\in Q: M(f\chi_Q)(x)>\la\}|d\la \\
    	& \geq |Q| |f|_Q + c_1\int_{|f|_Q}^{\infty} \left(\int_{\{x\in Q:
    		|f(x)|>\la\}}|f(y)|\,dy\right)\,\frac{d\la}{\la}\\
    	& =  |Q| |f|_Q + c_1 \int_{\{x\in Q:
    		|f(x)|>|f|_Q\}} \left( \int_{|f|_Q}^{|f(x)|} \frac{d\la}{\la}\right)\,|f(x)|\,dx \\
    	& =  |Q| |f|_Q + c_1 \int_{\{x\in Q:
    		|f(x)|>|f|_Q\}} |f(x)|\,\log \left( \frac{|f(x)|}{|f|_Q}\right)\,dx \\
    	&\geq c_1 \int_{Q}|f|\left(1+\log^+ \frac{|f|}{|f|_{Q}}\right).
    	\end{align*}
    	On the other hand,
    	\begin{align*}
    	\int_{Q}M(f\chi_Q) & = \int_0^{\infty}|\{x\in Q: M(f\chi_Q)(x)>\la\}|d\la \\
    	& \ap \int_0^{\infty}|\{x\in Q: M(f\chi_Q)(x)>2\la\}|d\la \\
    	& = \int_0^{|f|_Q}|\{x\in Q: M(f\chi_Q)(x)>2\la\}|d\la \\
    	&+ \int_{|f|_Q}^{\infty}|\{x\in Q: M(f\chi_Q)(x)>2\la\}|d\la\\
    	& \le |Q| |f|_Q + c_2\int_{|f|_Q}^{\infty} \left(\int_{\{x\in Q:
    		|f(x)|>\la\}}|f(y)|\,dy\right)\,\frac{d\la}{\la}\\
    	& =  |Q| |f|_Q + c_2 \int_{\{x\in Q:
    		|f(x)|>|f|_Q\}} |f(x)|\,\log \left( \frac{|f(x)|}{|f|_Q}\right)\,dx \\
    	&\le c_2 \int_{Q}|f|\left(1+\log^+ \frac{|f|}{|f|_{Q}}\right).
    	\end{align*}
    \end{proof}

    \begin{lem}\label{lem90099009}
        Inequalities
        \begin{equation*}
        \begin{split}
        \frac{1}{|Q|}\int_{Q}|f|\left(1+\log^+ \frac{|f|}{|f|_{Q}}\right)
        \ap \|f\|_{L(1 + \log^+ L),Q}
        \end{split}
        \end{equation*}
        hold for all $f\in \Lloc$ with positive constants independent of $f$
        and $Q$.
    \end{lem}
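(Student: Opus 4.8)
The plan is to prove the equivalence
$$
\frac{1}{|Q|}\int_{Q}|f|\left(1+\log^+ \frac{|f|}{|f|_{Q}}\right) \ap \|f\|_{L(1 + \log^+ L),Q}
$$
by combining the two preceding lemmas. Lemma \ref{stein} already gives
$$
\frac{1}{|Q|}\int_Q M(f\chi_Q) \ap \frac{1}{|Q|}\int_Q |f|\left(1 + \log^+ \frac{|f|}{|f|_Q}\right),
$$
so it suffices to show that the left-hand side of this is comparable to $\|f\|_{L(1+\log^+L),Q}$. Thus the whole statement reduces to the single equivalence
$$
\frac{1}{|Q|}\int_Q M(f\chi_Q) \ap \|f\|_{L(1+\log^+L),Q}.
$$

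\textbf{The upper bound.} First I would prove $\frac{1}{|Q|}\int_Q M(f\chi_Q)\ls \|f\|_{L(1+\log^+L),Q}$. This is exactly the cited estimate from \cite[p.~174]{CPer} that was already invoked inside the proof of Lemma \ref{lem6.1} (in deriving \eqref{eq996633}): for functions supported in $Q$ one has $\frac{1}{|Q|}\int_Q Mf \ls \|f\|_{L(1+\log^+L),Q}$. Since $f\chi_Q$ is supported in $Q$, this applies directly. So this direction requires essentially no new work beyond quoting the earlier estimate.

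\textbf{The lower bound.} For the reverse inequality $\|f\|_{L(1+\log^+L),Q}\ls \frac{1}{|Q|}\int_Q M(f\chi_Q)$, I would unwind the definition of the Luxemburg-type average $\|\cdot\|_{L(1+\log^+L),Q}$ with $\Phi(t)=t(1+\log^+t)$. Writing $a=\frac{1}{|Q|}\int_Q M(f\chi_Q)$, the goal is to verify $\frac{1}{|Q|}\int_Q \Phi(|f|/(ca))\le 1$ for a suitable constant $c$, which by definition of the infimum forces $\|f\|_{L(1+\log^+L),Q}\le ca$. By the normalization argument one can reduce to the case $a=1$, i.e.\ assume $\frac{1}{|Q|}\int_Q M(f\chi_Q)=1$; then since $|f|\le M(f\chi_Q)$ a.e.\ on $Q$ and $\Phi$ is a Young function, Jensen's inequality together with the earlier two-sided comparison allows one to bound $\frac{1}{|Q|}\int_Q \Phi(|f|/c)$ by $\frac{c'}{|Q|}\int_Q M(f\chi_Q)=c'$, and choosing $c$ large makes this $\le 1$. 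Concretely one uses that $\Phi(|f|)\ap |f|\log^+|f|$ in the relevant range together with the pointwise domination $|f|\le M(f\chi_Q)$, and the fact that Lemma \ref{stein} already controls $\frac{1}{|Q|}\int_Q|f|\log^+(|f|/|f|_Q)$ by $a$.

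\textbf{The main obstacle} will be the careful scaling bookkeeping in the lower bound: the quantity $|f|_Q$ appearing inside the logarithm in Lemma \ref{stein} must be reconciled with the normalization constant $a$ in the Luxemburg average, where the logarithm is taken relative to $|f|/(ca)$ rather than $|f|/|f|_Q$. Since $|f|_Q\le M(f\chi_Q)(x)$ for a.e.\ $x$ and $|f|_Q\le a$, the truncation levels are compatible, but one must check that replacing $|f|_Q$ by $a$ inside the $\log^+$ only costs a bounded multiplicative constant — this is where homogeneity of the problem under $f\mapsto f/a$ is used to reduce to $a=1$ cleanly. Once the normalization is fixed, the remaining verification that the $\Phi$-average is at most $1$ is a routine application of the growth comparison $\Phi(t)\ap t(1+\log^+t)$ and Lemma \ref{stein}.
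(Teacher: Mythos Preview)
Your approach is correct but takes a genuinely different route from the paper. You factor the equivalence through the maximal function: using Lemma~\ref{stein} to identify $\frac{1}{|Q|}\int_Q|f|(1+\log^+\frac{|f|}{|f|_Q})$ with $\frac{1}{|Q|}\int_Q M(f\chi_Q)$, and then comparing the latter with $\|f\|_{L(1+\log^+L),Q}$ via the P\'erez estimate and a Luxemburg-norm verification. The paper, by contrast, proves the statement \emph{directly} without any reference to $M$ or to Lemmas~\ref{lem6.1}--\ref{stein}. For the bound $\lesssim$, it writes $\frac{|f|}{|f|_Q}=\frac{|f|}{\|f\|_{L(1+\log^+L),Q}}\cdot\frac{\|f\|_{L(1+\log^+L),Q}}{|f|_Q}$, applies $\log^+(ab)\le\log^+a+\log^+b$, and controls the extra term by $|f|_Q\log^+\frac{\|f\|_{L(1+\log^+L),Q}}{|f|_Q}\le\|f\|_{L(1+\log^+L),Q}$ using $\log t\le t$. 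For the bound $\gtrsim$, it uses the extremal identity $\|f\|_{L(1+\log^+L),Q}=\frac{1}{|Q|}\int_Q|f|(1+\log^+\frac{|f|}{\|f\|_{L(1+\log^+L),Q}})$ together with $|f|_Q\le\|f\|_{L(1+\log^+L),Q}$, which makes the logarithm larger upon replacing the denominator by $|f|_Q$. Thus the paper's argument is a two-line elementary computation with log inequalities, entirely independent of maximal-function theory. Your route works too, and has the advantage of tying all three quantities in Theorem~\ref{thm6.3} together in one stroke; but it imports the weak-type bounds for $M$ underlying Lemma~\ref{stein} and the P\'erez estimate, whereas the paper's proof needs none of that. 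Incidentally, your invocation of Jensen's inequality in the lower bound is unnecessary: once you observe $|f|_Q\le a$, monotonicity of $\log^+$ in the denominator gives $\frac{1}{|Q|}\int_Q|f|(1+\log^+\frac{|f|}{a})\le\frac{1}{|Q|}\int_Q|f|(1+\log^+\frac{|f|}{|f|_Q})\lesssim a$ directly, and the Luxemburg bound follows.
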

    \begin{proof}
        Since
        $$
        1\leq \frac{1}{|Q|}\int_{Q}\frac{|f|}{|f|_{Q}}\left(1+\log^+
        \frac{|f|}{|f|_{Q}}\right),
        $$
        then
        \begin{equation*}
        |f|_{Q} \leq \|f\|_{L(1 + \log^+ L),Q}.
        \end{equation*}
        Using the inequality $\log^+ (ab)\leq \log^+ a +\log^+ b$, $a,b
        \in\R^+$, we get
        \begin{align*}
        \frac{1}{|Q|}\int_{Q}|f|\left(1+\log^+
        \frac{|f|}{|f|_{Q}}\right) &\\
        &\hspace{-2cm}=\frac{1}{|Q|}\int_{Q}|f|\left(1+\log^+\left(
        \frac{|f|}{\|f\|_{L(1 + \log^+ L),Q}}\frac{\|f\|_{L(1 + \log^+ L),Q}}{|f|_{Q}}\right)\right) \\
        &\hspace{-2cm}\leq \frac{1}{|Q|}\int_{Q}|f|\left(1+\log^+
        \frac{|f|}{\|f\|_{L(1 + \log^+ L),Q}}\right)
        \\
        &\hspace{-2cm}+\frac{1}{|Q|}\int_{Q}|f|\log^+\frac{\|f\|_{L(1 + \log^+ L),Q}}{|f|_{Q}}\\
        &\hspace{-2cm}\leq \|f\|_{L(1 + \log^+
            L),Q}+|f|_{Q}\log^+\frac{\|f\|_{L(1 + \log^+ L),Q}}{|f|_{Q}}.
        \end{align*}
        Since $\frac{\|f\|_{L(1 + \log^+ L),Q}}{|f|_{Q}}\geq 1$ and $\log
        t\leq t$ when $t\geq 1$, we get
        \begin{equation*}
        \begin{split}
        \frac{1}{|Q|}\int_{Q}|f|\left(1+\log^+ \frac{|f|}{|f|_{Q}}\right)
        \leq 2\|f\|_{L(1 + \log^+ L),Q}.
        \end{split}
        \end{equation*}
        On the other hand, since
        \begin{equation*}
        \|f\|_{L(1 + \log^+ L),Q} =   \frac{1}{|Q|}\int_{Q} |f|\left(1+\log^+ \frac{|f|}{\|f\|_{L(1 + \log^+ L),Q}}\right),
        \end{equation*}
        on using $|f|_{Q} \leq \|f\|_{L(1 + \log^+ L),Q}$, we get that
        $$
        \|f\|_{L(1 + \log^+ L),Q} \le \frac{1}{|Q|}\int_{Q}|f|\left(1+\log^+
        \frac{|f|}{|f|_{Q}}\right).
        $$
    \end{proof}
    
\noindent{\bf \textit {Proof of Theorem \ref{thm6.3}.}} By Lemma \ref{lem6.1}, we get that
$$
\sup_{Q \ni x}|Q|^{\frac{\la-n}{n}}\int_{Q}Mf(y)dy\lesssim
\sup_{Q\ni x}|Q|^{\frac{\la}{n}} \sup_{Q \subset Q'}\|f\|_{L(1+\log^+ L),Q'} \le \sup_{Q \ni x}|Q|^{\frac{\la}{n}} \|f\|_{L(1+\log^+ L),Q}.
$$
The equivalence
$$
\sup_{Q \ni x}|Q|^{\frac{\la}{n}}\|f\|_{L(1+\log^+ L),Q} \thickapprox
\sup_{Q \ni x}|Q|^{\frac{\la-n}{n}}\int_{Q}|f|\left(1+\log^+
\frac{|f|}{|f|_{Q}}\right)
$$
is obvious in view of Lemma \ref{lem90099009}.

By Lemma \ref{stein}, we have that
$$
\sup_{Q \ni x}|Q|^{\frac{\la-n}{n}}\int_{Q}|f|\left(1+\log^+
\frac{|f|}{|f|_{Q}}\right)\lesssim
\sup_{Q \ni x}|Q|^{\frac{\la-n}{n}}\int_{Q}Mf.
$$

$$\hspace{17.5cm}\square$$

    The following corollaries follow from Theorem \ref{thm6.3}.
    \begin{cor}
        Inequalities
        \begin{equation}\label{eq2340958671897252}
        M^2f(x) \thickapprox M_{L(1 + \log^+ L)}f(x) \thickapprox \sup_{x\in
            Q}\frac{1}{|Q|}\int_{Q}|f|\left(1+\log^+ \frac{|f|}{|f|_{Q}}\right)
        \end{equation}
        holds for all $x\in\rn$ and $f\in \Lloc$ with positive constants independent of $x$
        and $f$.
    \end{cor}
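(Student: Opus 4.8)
The plan is to obtain the corollary as the special case $\a = 0$ of Theorem \ref{thm6.3}, simply reading off each of the three displayed quantities. First I would substitute $\a = 0$, which is permitted since Theorem \ref{thm6.3} holds for every $0 \le \a < n$. At $\a = 0$ the fractional maximal operator reduces to $M$, so the leftmost object becomes
$$
M_0(Mf)(x) = \sup_{Q \ni x} \frac{1}{|Q|}\int_Q Mf = M^2 f(x),
$$
which is precisely the first quantity in \eqref{eq2340958671897252}.

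Next I would inspect the middle term of Theorem \ref{thm6.3} evaluated at $\a = 0$, namely $\sup_{Q \ni x}|Q|^{0}\,\|f\|_{L(1 + \log^+ L),Q} = \sup_{Q \ni x}\|f\|_{L(1 + \log^+ L),Q}$. The key step is to recognize this as exactly the Orlicz maximal function $M_{L(1 + \log^+ L)}f(x)$, since by the definition recalled in Section \ref{sect2} one has $M_{\Psi}f(x) = \sup_{x \in Q}\|f\|_{\Psi,Q}$ with the choice $\Psi(t) = t(1 + \log^+ t)$. This matches the second quantity in \eqref{eq2340958671897252}.

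Finally, the rightmost term of Theorem \ref{thm6.3} at $\a = 0$ is $\sup_{Q \ni x}|Q|^{-1}\int_Q |f|\left(1 + \log^+ \frac{|f|}{|f|_Q}\right)$, which is precisely the third quantity in \eqref{eq2340958671897252}. Assembling these three identifications and invoking the equivalences already supplied by Theorem \ref{thm6.3} yields the asserted chain. I expect no genuine obstacle here: all the analytic content is already contained in Theorem \ref{thm6.3} (hence in Lemmas \ref{lem6.1}, \ref{stein}, and \ref{lem90099009}), and the only point requiring care is the bookkeeping of notation, in particular identifying the middle term with the Orlicz maximal function $M_{L(1 + \log^+ L)}$.
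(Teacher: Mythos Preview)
Your proposal is correct and matches the paper's approach exactly: the paper presents this statement as an immediate corollary of Theorem \ref{thm6.3} with $\a=0$, without giving a separate proof. The identifications you spell out---$M_0(Mf)=M^2f$, $\sup_{Q\ni x}\|f\|_{L(1+\log^+ L),Q}=M_{L(1+\log^+ L)}f(x)$, and the direct matching of the third term---are precisely the bookkeeping implicit in the paper's one-line attribution.
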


    \begin{cor}\label{thm134321r19}
        Let $0<\la <n$. The equivalency
        $$
        \|Mf\|_{\mathcal{M}_{1,\la}} \thickapprox
        \|f\|_{\mathcal{M}_{L(1 + \log^+ L),\la}} \thickapprox  \sup_{Q}|Q|^{\frac{\la-n}{n}}\int_{Q}|f|\left(1+\log^+ \frac{|f|}{|f|_{Q}}\right)
        $$
        holds with positive constants independent of $f$.
    \end{cor}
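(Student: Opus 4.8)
The plan is to derive this corollary directly from Theorem \ref{thm6.3} by passing from the pointwise (fractional-maximal) formulation to a global supremum over cubes. The starting observation is that the three quantities in the statement are precisely the ``suprema over all cubes'' of the three mutually equivalent expressions inside $M_\lambda(Mf)$ in Theorem \ref{thm6.3}, specialized to $\alpha=\lambda$.

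First I would record the cube formulation of the relevant Morrey norms. As already noted in the proof of Theorem \ref{thm3.1}, the ball-based definition of $\mathcal M_{1,\lambda}$ is equivalent to its cube-based version, so that
$$
\|Mf\|_{\mathcal M_{1,\lambda}} \thickapprox \sup_Q |Q|^{\frac{\lambda-n}{n}}\int_Q Mf,
$$
the supremum being over all cubes $Q$ with sides parallel to the axes; this uses only that every ball is sandwiched between two cubes of comparable measure (and conversely), while the weight $r^{\lambda-n}\thickapprox |Q|^{\frac{\lambda-n}{n}}$ is stable under such comparisons since $0<\lambda<n$. On the other hand, by the very definition of the Zygmund-Morrey space, the middle quantity is
$$
\|f\|_{\mathcal M_{L(1+\log^+ L),\lambda}} = \sup_Q |Q|^{\frac{\lambda}{n}}\|f\|_{L(1+\log^+ L),Q}.
$$

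The main step is then to set $\alpha=\lambda$ in Theorem \ref{thm6.3} and pass to the global supremum. For each fixed $x\in\rn$ that theorem yields
$$
\sup_{Q\ni x}|Q|^{\frac{\lambda-n}{n}}\int_Q Mf \thickapprox \sup_{Q\ni x}|Q|^{\frac{\lambda}{n}}\|f\|_{L(1+\log^+ L),Q}\thickapprox \sup_{Q\ni x}|Q|^{\frac{\lambda-n}{n}}\int_Q |f|\Big(1+\log^+\tfrac{|f|}{|f|_Q}\Big),
$$
with constants independent of $x$ and $f$. Since $\sup_{x\in\rn}\sup_{Q\ni x}(\cdots)=\sup_Q(\cdots)$, and since taking a supremum preserves two-sided estimates with uniform constants, applying $\sup_{x\in\rn}$ to each term converts these pointwise equivalences into equivalences of the three global suprema. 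Combining with the two identifications above gives exactly the claimed chain of equivalences.

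I do not expect a genuine obstacle here: the only non-automatic point is the ball-versus-cube comparison in the first display, and that is routine and has effectively been used already in Theorem \ref{thm3.1}. The entire substance of the corollary is contained in Theorem \ref{thm6.3}; the corollary is essentially its $\sup_x$ shadow.
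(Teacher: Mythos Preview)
Your proposal is correct and matches the paper's approach: the paper simply states that the corollary follows from Theorem \ref{thm6.3}, and your argument---specialize to $\alpha=\lambda$, take the supremum over $x\in\rn$ to pass from $\sup_{Q\ni x}$ to $\sup_Q$, and identify the resulting suprema with the (cube versions of) the Morrey norms---is exactly how that deduction goes.
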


    Note that $M^2f \thickapprox M_{L(1 + \log^+ L)}f$ was proved in
    \cite{CPer} (see, also \cite[p. 159]{graf}). For the second part of
    \eqref{eq2340958671897252} see \cite{CarPas}, \cite{Leck},
    \cite{LeckN} and \cite{CPer}. The equivalence
    $\|Mf\|_{\mathcal{M}_{1,\la}} \thickapprox \|f\|_{\mathcal{M}_{L(1 + \log^+ L),\la}}$ is a special case of \cite[Lemma 3.5]{sst}.


    \section{Note on the boundedness of the maximal function on Zygmund-Morrey spaces}\label{sect7.3}

    In this section we prove that the Hardy-Littlewood maximal operator $M$ is bounded on $\M_{L(1 + \log^+ L),\la}$, $0 < \la < n$, for radially decreasing functions, and we give an example which shows that $M$ is not bounded on $\M_{L(1 + \log^+ L),\la}$, $0 < \la < n$.

    In order to prove the main result of this section we need the following auxiliary lemmas.
    \begin{lem}\label{lem999.1}
        Assume that $0 < \la <  n$. Let  $f \in \mf^{\rad,\dn}(\rn)$ with $f(x) = \vp (|x|)$. The equivalency
        $$
        \|f\|_{\M_{L(1 + \log^+ L),\la}} \ap \sup_{x>0} x^{\la - n} \int_0^x
        \frac{1}{t} \int_0^t
        |\vp (\rho)|\rho^{n-1}\,d\rho\,dt
        $$
        holds with positive constants independent of $f$.
    \end{lem}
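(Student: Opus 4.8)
The plan is to pass through the classical Morrey space via Corollary \ref{thm134321r19}, which gives $\|f\|_{\M_{L(1 + \log^+ L),\la}} \ap \|Mf\|_{\M_{1,\la}}$; thus it suffices to identify $\|Mf\|_{\M_{1,\la}}$ with the stated iterated Hardy expression. For $f \in \mf^{\rad,\dn}$ one has $Mf \ap Hf$ with $Hf \in \mf^{\rad,\dn}$, and since the $\M_{1,\la}$-norm is monotone (lattice property), I may replace $Mf$ by $Hf$ throughout and work solely with the radially decreasing function $Hf$.

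The one genuinely nontrivial step is to show that for a radially decreasing $g = \psi(|\cdot|)$ the Morrey norm is realized, up to constants, on balls centered at the origin, i.e.
$$
\|g\|_{\M_{1,\la}} = \sup_{x\in\rn,\,r>0} r^{\la-n}\int_{\Bxr}|g| \ap \sup_{r>0} r^{\la-n}\int_{\Br} g.
$$
This follows from the Hardy--Littlewood inequality: for any ball $\Bxr$,
$$
\int_{\Bxr} g \le \int_0^{|\Bxr|} g^*(s)\,ds = \int_{\Br} g,
$$
the equality holding because a radially decreasing $g$ coincides with its own symmetric decreasing rearrangement and $|\Bxr| = |\Br|$. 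Taking the supremum over $x$ for each fixed $r$ collapses the double supremum to origin-centered balls, while the reverse inequality is trivial since such balls are admissible. I expect this reduction to be the main obstacle; the rest is a direct computation.

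It then remains to evaluate $\int_{\Br} Hf$. From the definition of $H$ and polar coordinates, with $|f| = \vp(|\cdot|)$, one obtains $Hf(y) = n|y|^{-n}\int_0^{|y|}|\vp(\ro)|\ro^{n-1}\,d\ro$, and a second passage to polar coordinates yields
$$
\int_{\Br} Hf \ap \int_0^r \frac{1}{t}\int_0^t |\vp(\ro)|\ro^{n-1}\,d\ro\,dt,
$$
the dimensional constants being absorbed into $\ap$. Chaining the three displays gives
$$
\|f\|_{\M_{L(1 + \log^+ L),\la}} \ap \|Hf\|_{\M_{1,\la}} \ap \sup_{x>0} x^{\la-n}\int_0^x \frac{1}{t}\int_0^t |\vp(\ro)|\ro^{n-1}\,d\ro\,dt,
$$
which is exactly the asserted equivalence.
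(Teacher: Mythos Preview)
Your argument is correct and follows essentially the same path as the paper: both start from $\|f\|_{\M_{L(1+\log^+ L),\la}} \ap \|Mf\|_{\M_{1,\la}}$ (Corollary~\ref{thm134321r19}), use $Mf \ap Hf$ for $f \in \mf^{\rad,\dn}$, and reduce via rearrangement to integrals over centered balls, then compute in polar coordinates. The only noteworthy difference is in the packaging of the upper bound: the paper passes through $\int_B Mf \le \int_0^{|B|} (Mf)^*$ and then invokes the Herz inequality $(Mf)^* \ap f^{**}$, whereas you apply the Hardy--Littlewood inequality directly to the radially decreasing function $Hf$ to conclude that $\int_{\Bxr} Hf \le \int_{\Br} Hf$. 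Your route is marginally more elementary in that it avoids quoting $(Mf)^* \ap f^{**}$ as a separate result, but the underlying content is the same.
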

    \begin{proof}
        Recall that
        $$
        \|f\|_{\M_{L(1 + \log^+ L),\la}} \ap
        \sup_{B}|B|^{\frac{\la-n}{n}}\int_{B}M f = \| M_{\la}(Mf)\|_{\infty}, ~ f \in \mf(\rn).
        $$
        Since $M_{\la}(f)(y) \gs \frac{1}{|B(0,|y|)|^{1 - \la /n}}\int_{B(0,|y|)} |f(z)|\,dz$, in view of $Mf \ap Hf$, $f \in
        \mf^{\rad,\dn}$, switching to polar coordinates, we have that
        \begin{align*}
        M_{\la}(Mf)(y) & \gs \frac{1}{|B(0,|y|)|^{1 - \la /n}}\int_{B(0,|y|)} |Mf(z)|\,dz \\
        & \ap \frac{1}{|B(0,|y|)|^{1 - \la /n}}\int_{B(0,|y|)} |Hf(z)|\,dz
        \\
        & = \frac{1}{|B(0,|y|)|^{1 - \la /n}}\int_{B(0,|y|)}
        \frac{1}{|B(0,|z|)} \int_{B(0,|z|)} |f(w)|\,dw\,dz \\
        & \ap \frac{1}{|B(0,|y|)|^{1 -\la /n}}\int_{B(0,|y|)}
        |z|^{-n} \int_0^{|z|} |\vp(\rho)|\rho^{n-1}\,d\rho\,dz \\
        & \ap |y|^{\la - n}\int_0^{|y|} \frac{1}{t} \int_0^t
        |\vp(\rho)|\rho^{n-1}\,d\rho\,dt.
        \end{align*}
        Consequently,
        \begin{align*}
        \|f\|_{\M_{L(1 + \log^+ L),\la}} & \gs \esup_{y \in \rn} |y|^{\la - n}\int_0^{|y|} \frac{1}{t} \int_0^t
        |\vp(\rho)|\rho^{n-1}\,d\rho\,dt  \\
        & = \sup_{x > 0} x^{\la - n}\int_0^x \frac{1}{t} \int_0^t
        |\vp(\rho)|\rho^{n-1}\,d\rho\,dt,
        \end{align*}
        where $f (\cdot) = \vp (|\cdot|)$.

        On the other hand,
        \begin{align*}
        \|f\|_{\M_{L(1 + \log^+ L),\la}} & \ls \sup_{B}|B|^{\frac{\la-n}{n}}\int_0^{|B|}(M f)^* (t)\,dt \\
        & \ap \sup_{B}|B|^{\frac{\la-n}{n}}\int_0^{|B|}f^{**}(t)\,dt \\
        & = \sup_{B}|B|^{\frac{\la-n}{n}}\int_0^{|B|} \frac{1}{t}\int_0^t f^{*}(s)\,ds\,dt \\
        & = \sup_{B}|B|^{\frac{\la-n}{n}}\int_0^{|B|} \frac{1}{t}\int_0^t |\vp(s^{\frac{1}{n}})|\,ds\,dt \\
        & \ap \sup_{B}|B|^{\frac{\la-n}{n}}\int_0^{|B|} \frac{1}{t}\int_0^{t^{\frac{1}{n}}} |\vp(\rho)|\rho^{n-1}\,d\rho\,dt \\
        & \ap \sup_{B}|B|^{\frac{\la-n}{n}}\int_0^{|B|^{\frac{1}{n}}} \frac{1}{x}\int_0^{x} |\vp(\rho)|\rho^{n-1}\,d\rho\,dx \\
        & = \sup_{x > 0} x^{\la - n}\int_0^x \frac{1}{t} \int_0^t
        |\vp(\rho)|\rho^{n-1}\,d\rho\,dt,
        \end{align*}
        where  $f (\cdot) = \vp (|\cdot|)$.
    \end{proof}

    \begin{cor}\label{cor999.1}
        Assume that $0 < \la <  n$. Let  $f \in \mf^{\rad,\dn}(\rn)$ with $f(x) = \vp (|x|)$. The equivalency
        $$
        \|Mf\|_{\M_{L(1 + \log^+ L),\la}} \ap \sup_{x>0} x^{\la - n} \int_0^x \frac{1}{y} \int_0^y
        \frac{1}{t} \int_0^t
        \vp (\rho)\rho^{n-1}\,d\rho\,dt\,dy
        $$
        holds with positive constants independent of $f$.
    \end{cor}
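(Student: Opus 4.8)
The plan is to apply Lemma \ref{lem999.1} with $Mf$ in the role of $f$. The only thing that prevents a literal application is that Lemma \ref{lem999.1} is stated for functions presented in the form $\psi(|\cdot|)$ with $\psi \in \mf^{\dn}$, whereas $Mf$ is not given in that shape. This is bypassed using the facts already recorded in the excerpt: for $f \in \mf^{\rad,\dn}(\rn)$ one has the pointwise equivalence $Mf \ap Hf$, and $Hf \in \mf^{\rad,\dn}(\rn)$. Because the Zygmund--Morrey norm is monotone with respect to $|\cdot|$ (it is built from the $L(1+\log^+L)$-averages, which enjoy the lattice property) and homogeneous, the pointwise two-sided bound upgrades to
$$
\|Mf\|_{\M_{L(1 + \log^+ L),\la}} \ap \|Hf\|_{\M_{L(1 + \log^+ L),\la}}.
$$
Thus it suffices to evaluate the right-hand side of Lemma \ref{lem999.1} for the genuinely radially decreasing function $Hf$.

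Next I would identify the radial profile of $Hf$. Writing $Hf(\cdot) = \psi(|\cdot|)$ and passing to polar coordinates exactly as in the proof of Lemma \ref{lem999.1}, one obtains
$$
\psi(r) \ap r^{-n}\int_0^r \vp(\rho)\rho^{n-1}\,d\rho, \qquad r>0 .
$$
Feeding $\psi$ into Lemma \ref{lem999.1} (applied to $Hf$) gives
$$
\|Hf\|_{\M_{L(1 + \log^+ L),\la}} \ap \sup_{x>0} x^{\la-n}\int_0^x \frac{1}{y}\int_0^y \psi(\rho)\rho^{n-1}\,d\rho\,dy .
$$
It remains to substitute the profile: since $\psi(\rho)\rho^{n-1} \ap \frac{1}{\rho}\int_0^\rho \vp(s)s^{n-1}\,ds$, relabelling the integration variables turns the inner integral into the double Hardy-type integral $\int_0^y \frac{1}{t}\int_0^t \vp(\rho)\rho^{n-1}\,d\rho\,dt$, and inserting this produces precisely the triple iterated integral in the statement.

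Conceptually the corollary is a one-step iteration: the single Hardy operator hidden in $Mf \ap Hf$ contributes one layer of averaging, and Lemma \ref{lem999.1} contributes the remaining two, for a total of three nested integrals. The points that require care rather than genuine difficulty are (i) justifying the passage from $Mf$ to $Hf$ through monotonicity and homogeneity of the norm, so that Lemma \ref{lem999.1} applies verbatim; (ii) checking that the profile $\psi$ of $Hf$ is again nonincreasing, so that $Hf \in \mf^{\rad,\dn}(\rn)$ (this follows from $\vp$ being nonincreasing); and (iii) keeping the relabelling of the nested variables consistent so that the equivalence constants stay uniform in $f$. I do not expect any real obstacle here, as the whole statement reduces to substituting the explicit profile of $Hf$ into Lemma \ref{lem999.1}.
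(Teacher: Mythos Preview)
Your proposal is correct and follows essentially the same route as the paper: use $Mf \ap Hf$ together with $Hf \in \mf^{\rad,\dn}$ to reduce to Lemma \ref{lem999.1} applied to $Hf$, then substitute the radial profile of $Hf$ (computed in polar coordinates) to produce the extra layer of averaging. The paper's proof is more terse---it does not spell out the lattice/homogeneity justification for passing from $Mf$ to $Hf$---but the argument is identical.
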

    \begin{proof}
        Let $f \in \mf^{\rad,\dn}$  with $f(x) = \vp (|x|)$. Since $Mf \ap Hf$ and $Hf \in \mf^{\rad,\dn}$, by Lemma \ref{lem999.1}, switching to polar coordinates, we have that
        \begin{align*}
        \|Mf\|_{\M_{L(1 + \log^+ L),\la}} & \ap \sup_{x>0} x^{\la - n} \int_0^x
        \frac{1}{y} \int_0^y
        \left(\frac{1}{|B(0,t)|} \int_{B(0,t)} |f(y)|\,dy \right)t^{n-1}\,dt\,dy \\
        & \ap \sup_{x>0} x^{\la - n} \int_0^x \frac{1}{y} \int_0^y
        \frac{1}{t} \int_0^t
        \vp (\rho)\rho^{n-1}\,d\rho\,dt\,dy.
        \end{align*}
    \end{proof}

    \begin{lem}\label{lem999.2}
        Assume that $0 < \la <  n$. Let  $f \in \mf^{\rad,\dn}$ with $f(x) = \vp (|x|)$. The inequality
        $$
        \|Mf\|_{\M_{L(1 + \log^+ L),\la}} \ls \|f\|_{\M_{L(1 + \log^+
                L),\la}},~ f \in \mf^{\rad,\dn}
        $$
        holds if and only if the inequality
        \begin{align*}
        \sup_{x>0} x^{\la - n} \int_0^x \frac{1}{y} \int_0^y
        \frac{1}{t} \int_0^t
        \vp (\rho)\rho^{n-1}\,d\rho\,dt\,dy & \\
        & \hspace{-3cm} \ls \sup_{x>0} x^{\la - n} \int_0^x
        \frac{1}{t} \int_0^t
        \vp (\rho)\rho^{n-1}\,d\rho\,dt,~ \vp \in \mf^{+,\dn}(\R_+)
        \end{align*}
        holds true.
    \end{lem}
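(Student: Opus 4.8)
The plan is to obtain the asserted equivalence by nothing more than substituting into the norm inequality the two explicit one-dimensional descriptions already established: Lemma~\ref{lem999.1} for $\|f\|_{\M_{L(1 + \log^+ L),\la}}$ and Corollary~\ref{cor999.1} for $\|Mf\|_{\M_{L(1 + \log^+ L),\la}}$. Since both of these descriptions are written in terms of the profile $\vp$ of a non-negative radially decreasing function, I would first record that it suffices to work with $\vp \in \mf^{+,\dn}(\R_+)$ and $f(\cdot) = \vp(|\cdot|)$; this is precisely the class over which the claimed Hardy-type inequality is quantified, and for such $f$ one has $|\vp| = \vp$, so that Lemma~\ref{lem999.1} and Corollary~\ref{cor999.1} apply verbatim.

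For $\vp \in \mf^{+,\dn}(\R_+)$ set
$$
I(\vp) := \sup_{x>0} x^{\la - n} \int_0^x \frac{1}{t} \int_0^t \vp (\rho)\rho^{n-1}\,d\rho\,dt, \qquad II(\vp) := \sup_{x>0} x^{\la - n} \int_0^x \frac{1}{y} \int_0^y \frac{1}{t} \int_0^t \vp (\rho)\rho^{n-1}\,d\rho\,dt\,dy.
$$
Then Lemma~\ref{lem999.1} reads $\|f\|_{\M_{L(1 + \log^+ L),\la}} \ap I(\vp)$ and Corollary~\ref{cor999.1} reads $\|Mf\|_{\M_{L(1 + \log^+ L),\la}} \ap II(\vp)$, both with constants depending only on $n$ and $\la$ and independent of $\vp$. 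The claimed Hardy-type inequality is exactly $II(\vp) \ls I(\vp)$ for all $\vp \in \mf^{+,\dn}(\R_+)$. Granting the two equivalences, both implications reduce to immediate chains of inequalities: if $II(\vp) \le C\, I(\vp)$ for all $\vp$, then $\|Mf\| \ls II(\vp) \le C\, I(\vp) \ls \|f\|$; conversely, if $\|Mf\| \le D\,\|f\|$, then $II(\vp) \ls \|Mf\| \le D\,\|f\| \ls I(\vp)$, and in each case the implicit constant absorbs only the product of the two equivalence constants.

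The one point that genuinely has to be checked, rather than being pure algebra, is that the constants in Lemma~\ref{lem999.1} and Corollary~\ref{cor999.1} are uniform in $f$ (equivalently in $\vp$), so that the universally quantified norm inequality and the universally quantified Hardy inequality carry exactly the same information; since both results are stated with constants independent of $f$, this causes no difficulty. I would also note explicitly that the restriction to $\vp \geq 0$ is what reconciles the two function classes appearing in the statement, because the conclusion of Corollary~\ref{cor999.1} is phrased with $\vp$ rather than $|\vp|$ and is therefore meaningful precisely for non-negative profiles. Thus the lemma is, in effect, a transcription of the boundedness of $M$ on $\M_{L(1 + \log^+ L),\la}$ for radially decreasing functions into the language of weighted Hardy inequalities for the iterated averaging operator, and no further estimation is required.
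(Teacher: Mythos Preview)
Your proposal is correct and follows exactly the paper's approach: the paper's proof is the single sentence ``The statement immediately follows from Lemma~\ref{lem999.1} and Corollary~\ref{cor999.1},'' and your argument is precisely a careful unpacking of that sentence, substituting the equivalences $\|f\|_{\M_{L(1+\log^+L),\la}} \ap I(\vp)$ and $\|Mf\|_{\M_{L(1+\log^+L),\la}} \ap II(\vp)$ into the norm inequality. Your remark about restricting to $\vp \ge 0$ to reconcile the class $\mf^{\rad,\dn}$ with $\mf^{+,\dn}(\R_+)$ is a welcome clarification that the paper leaves implicit.
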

    \begin{proof}
        The statement immediately follows from Lemma \ref{lem999.1} and Corollary \ref{cor999.1}.
    \end{proof}

    \begin{lem}\label{lem999.5}
        Let $0 <\la < n$. Then inequality
        \begin{equation}\label{eq.999}
        \sup_{x>0} x^{\la - n} \int_0^x \frac{1}{y} \int_0^y
        \frac{1}{t} \int_0^t
        \vp (\rho)\rho^{n-1}\,d\rho\,dt\,dy \ls \sup_{x>0} x^{\la - n} \int_0^x
        \frac{1}{t} \int_0^t
        \vp (\rho)\rho^{n-1}\,d\rho\,dt
        \end{equation}
        holds for all $\vp \in \mf^{+,\dn}(\R_+)$.
    \end{lem}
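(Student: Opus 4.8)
The plan is to read both sides of \eqref{eq.999} as the same weighted supremum applied to one monotone quantity, thereby turning the estimate into a one-dimensional weighted Hardy inequality. Writing
$$
G(x) := \int_0^x \frac{1}{t}\int_0^t \vp(\rho)\rho^{n-1}\,d\rho\,dt,
$$
the right-hand side of \eqref{eq.999} is exactly $\sup_{x>0} x^{\la-n} G(x)$, while the inner triple integral on the left equals $\int_0^x G(y)/y\,dy$ once one recognises that $\frac{1}{y}\int_0^y(\cdots)\,dt = G(y)/y$. Hence \eqref{eq.999} is nothing but
$$
\sup_{x>0} x^{\la-n}\int_0^x \frac{G(y)}{y}\,dy \ls \sup_{x>0} x^{\la-n} G(x).
$$

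First I would dispose of the trivial case: if the right-hand side is infinite there is nothing to prove, so set $A := \sup_{x>0} x^{\la-n} G(x) < \infty$. The whole point is that this supremum encodes a pointwise growth bound, namely $G(y) \le A\,y^{n-\la}$ for every $y>0$, where the positivity $\vp \ge 0$ guarantees that $G$ is nonnegative and nondecreasing so that all the integrals are well defined.

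The main step is then a direct substitution of this bound into the averaging integral on the left:
$$
\int_0^x \frac{G(y)}{y}\,dy \le A \int_0^x y^{n-\la-1}\,dy = \frac{A}{n-\la}\,x^{n-\la}.
$$
Here the hypothesis $0<\la<n$ enters in an essential way: it forces $n-\la>0$, which is precisely the condition that makes $y^{n-\la-1}$ integrable at the origin and produces the finite constant $1/(n-\la)$. Multiplying through by $x^{\la-n}$ and taking the supremum over $x>0$ gives that the left-hand side of \eqref{eq.999} is bounded by $\frac{1}{n-\la}\,A$, which is the claim with explicit constant $1/(n-\la)$.

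I do not anticipate a genuine obstacle here: the monotonicity and positivity of $\vp$ are used only to keep every quantity nonnegative and the inner Hardy averages well defined, and the single growth bound $G(y)\le A\,y^{n-\la}$ does all the work. The one point requiring care is the convergence of $\int_0^x y^{n-\la-1}\,dy$ at the origin, which is exactly what $\la<n$ secures; the resulting constant depends only on $n-\la$ and is independent of $\vp$, as required.
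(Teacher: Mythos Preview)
Your proof is correct and follows essentially the same route as the paper: the paper inserts $1 = y^{n-\la-1}\,y^{\la-n}$ into the $dy$-integrand and pulls out the factor $\sup_{y>0} y^{\la-n}\int_0^y \frac{1}{t}\int_0^t \vp(\rho)\rho^{n-1}\,d\rho\,dt$, then evaluates $\sup_{x>0} x^{\la-n}\int_0^x y^{n-\la-1}\,dy = \tfrac{1}{n-\la}$, which is exactly your pointwise bound $G(y)\le A\,y^{n-\la}$ followed by integration. Your write-up with the auxiliary function $G$ and constant $A$ just makes this step more explicit.
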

    \begin{proof}
        Indeed:
        \begin{align*}
        \sup_{x>0} x^{\la - n} \int_0^x \frac{1}{y} \int_0^y
        \frac{1}{t} \int_0^t
        \vp (\rho)\rho^{n-1}\,d\rho\,dt\,dy & \\
        & \hspace{-5cm} =
        \sup_{x>0} x^{\la - n} \int_0^x y^{n - \la - 1} y^{\la - n} \int_0^y
        \frac{1}{t} \int_0^t
        \vp (\rho)\rho^{n-1}\,d\rho\,dt\,dy \\
        & \hspace{-5cm} \le \sup_{y > 0}  y^{\la - n} \int_0^y \frac{1}{t} \int_0^t
        \vp (\rho)\rho^{n-1}\,d\rho\,dt \cdot \left( \sup_{x > 0} x^{\la - n} \int_0^x y^{n - \la - 1} dy \right) \\
        &  \hspace{-5cm} \ap \sup_{y > 0}  y^{\la - n} \int_0^y \frac{1}{t} \int_0^t
        \vp (\rho)\rho^{n-1}\,d\rho\,dt.
        \end{align*}
    \end{proof}

    \begin{thm}\label{main11}
        Assume that $0 < \la <  n$. The inequality
        $$
        \|Mf\|_{\M_{L(1 + \log^+ L),\la}} \ls \|f\|_{\M_{L(1 + \log^+ L),\la}}
        $$
        holds for all $f \in \mf^{\rad,\dn}$ with constant independent of $f$.
    \end{thm}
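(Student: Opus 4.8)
The plan is to reduce the claimed $n$-dimensional norm estimate to a purely one-dimensional iterated Hardy inequality in the radial variable, and then to dispose of that inequality directly. The groundwork has already been laid: for $f \in \mf^{\rad,\dn}$ with $f(\cdot) = \vp(|\cdot|)$, Lemma \ref{lem999.1} identifies $\|f\|_{\M_{L(1 + \log^+ L),\la}}$ with the single weighted Hardy functional
$$
\sup_{x>0} x^{\la-n}\int_0^x \frac{1}{t}\int_0^t \vp(\rho)\rho^{n-1}\,d\rho\,dt,
$$
while Corollary \ref{cor999.1} identifies $\|Mf\|_{\M_{L(1 + \log^+ L),\la}}$ with the analogous functional carrying one extra Hardy averaging in an outer variable. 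This additional averaging reflects the passage $Mf \ap Hf$ together with the fact that $Hf \in \mf^{\rad,\dn}$ whenever $f \in \mf^{\rad,\dn}$.

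First I would invoke Lemma \ref{lem999.2}, which packages these two equivalences and shows that the desired boundedness
$$
\|Mf\|_{\M_{L(1 + \log^+ L),\la}} \ls \|f\|_{\M_{L(1 + \log^+ L),\la}}, \quad f \in \mf^{\rad,\dn},
$$
is equivalent to the scalar inequality
$$
\sup_{x>0} x^{\la-n}\int_0^x \frac{1}{y}\int_0^y \frac{1}{t}\int_0^t \vp(\rho)\rho^{n-1}\,d\rho\,dt\,dy \ls \sup_{x>0} x^{\la-n}\int_0^x \frac{1}{t}\int_0^t \vp(\rho)\rho^{n-1}\,d\rho\,dt
$$
holding for every $\vp \in \mf^{+,\dn}(\R_+)$. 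Thus the entire statement collapses to controlling the triple iterated average by the double one, uniformly over monotone $\vp$.

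Second, I would establish precisely this scalar inequality, which is exactly the content of Lemma \ref{lem999.5}. The mechanism there is to write the outer weight $1$ as $y^{n-\la-1}\cdot y^{\la-n}$, pull the supremum of the double-average factor $y^{\la-n}\int_0^y t^{-1}\int_0^t \vp(\rho)\rho^{n-1}\,d\rho\,dt$ out of the $y$-integral, and then observe that the remaining integral $\int_0^x y^{n-\la-1}\,dy$ is comparable to $x^{n-\la}$ precisely because $n-\la-1 > -1$, i.e. $\la < n$. The factor $x^{\la-n}$ cancels this contribution, leaving the double average, which is the right-hand side.

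Combining the two steps yields the theorem. The main obstacle is conceptual rather than computational: it lies in the reduction encapsulated by Lemmas \ref{lem999.1} and \ref{lem999.2}, where one must correctly track how the maximal operator acts on radially decreasing functions through $Mf \ap Hf$ and how, for such $f$, the Zygmund-Morrey norm collapses to an iterated Hardy average via the equivalence $\|g\|_{\M_{L(1 + \log^+ L),\la}} \ap \|M_\la(Mg)\|_\infty$. Once these reductions are in place, the surviving one-dimensional inequality is elementary and hinges only on the integrability exponent constraint $\la < n$, which is exactly the standing hypothesis.
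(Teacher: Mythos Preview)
Your proposal is correct and follows exactly the paper's approach: the paper's proof simply says that the statement follows by Lemmas \ref{lem999.2} and \ref{lem999.5}, and your write-up unpacks precisely those two reductions in the same way, including the $y^{n-\la-1}\cdot y^{\la-n}$ factorization that drives Lemma \ref{lem999.5}.
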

    \begin{proof}
        The statement follows by Lemmas \ref{lem999.2} and \ref{lem999.5}.
    \end{proof}

    \begin{ex}
        We give an example which shows that $M$ is not bounded on $\M_{L(1 +
            \log^+ L),\la}$, $0 < \la < n$. For simplicity let $n = 1$ and $\la
        = 1 / 2$. Consider even function $f$ defined as follows:
        $$
        f(x) = \sum_{k=0}^{\infty} \chi_{[k^2 \ln^2 (k+e),k^2 \ln^2 (k+e) +
            1]}(x), \qquad x \ge 0.
        $$
        It is easy to see that $Mf$ and $M^2 f$ are even functions. Obviously,
        \begin{align*}
        Mf(x) \ap & \sum_{k = 0}^{\infty} \chi_{[k^2 \ln^2 (k + e),k^2 \ln^2 (k + e) + 1]}(x) \\
        & + \sum_{k = 0}^{\infty} \frac{1}{x - k^2 \ln^2 (k + e)}\chi_{[k^2 \ln^2 (k + e) + 1, k^2 \ln^2 (k + e) + 1 + m_k]}(x) \\
        & + \sum_{k = 0}^{\infty} \frac{1}{(k+1)^2\ln^2 (k + 1 + e) + 1 -
            x}\chi_{[k^2 \ln^2 (k + e) + 1 + m_k, (k+1)^2 \ln^2 (k + 1 +
            e)]}(x), ~ x \ge 0,
        \end{align*}
        where
        $$
        m_k = \frac{(k+1)^2 \ln^2 (k + 1 + e) - k^2 \ln^2 (k + e) - 1}{2}, ~ k = 0,1,2,\ldots.
        $$
        Then
        \begin{align*}
        \|f\|_{\M_{L(1 + \log^+ L),1 / 2}(\R)} & \ap \|Mf\|_{{\mathcal M}_{1,1/2}(\R)} = \sup_{I}|I|^{-1/2} \int_I Mf \\
        & \le \sup_{I:\,|I| \le 1}|I|^{-1/2} \int_I Mf + \sup_{I:\,|I| > 1}|I|^{-1/2} \int_I Mf.
        \end{align*}
        It is easy to see that
        $$
        \sup_{I:\,|I| \le 1}|I|^{-1/2} \int_I Mf \le \sup_{I:\,|I| \le 1}|I|^{1/2} \le 1.
        $$
        Since
        $$
        \int_{j^2 \ln^2 (j+e)}^{(j+1)^2 \ln^2 (j+e+1)} Mf (x)\,dx \ap (1 + 2 \ln (1
        + m_j)), ~ j = 0,1,2,\ldots,
        $$
        we have that
        \begin{align*}
        \sup_{I:\,|I| > 1}|I|^{-1/2} \int_I Mf (x)\,dx & = \sup_{m \ge 2} \sup_{I:\,m - 1 < |I| \le m}|I|^{-1/2} \int_I Mf (x)\,dx \\
        & \ls \sup_{m \ge 2} m^{-1/2} \int_0^m  Mf (x)\,dx \\
        & \le \sup_{m \ge 2} m^{-1/2} \sum_{i^2 \ln^2 (j+e) < m}\int_{j^2
            \ln^2 (j+e)}^{(j+1)^2 \ln^2 (j+e+1)} Mf (x)\,dx \\
        & \ap \sup_{m \ge 2} m^{-1/2} \sum_{i^2 \ln^2 (j+e) < m} (1 + 2 \ln (1
        + m_j)) \\
        & \ls \sup_{m \ge 2} m^{-1/2} \sum_{i^2 \ln^2 (j+e) < m} \ln (j + e)
        \\
        & \ls \sup_{m \ge 2} m^{-1/2} m^{1/2} = 1,
        \end{align*}
        we have that
        $$
        \|f\|_{\M_{L(1 + \log^+ L),1 / 2}(\R)} \ls 1 + 1 = 2.
        $$

        On the other hand, it is easy to see that \begin{align*}
        M^2 f(x) & \ge \frac{1}{x - (k^2 \ln^2 (k+e) + 1)} \int_{k^2 \ln^2 (k + e) + 1}^x \frac{dt}{t - k^2 \ln^2 (k + e)} \\
        & = \frac{\ln (x - k^2 \ln^2 (k+e))}{x - (k^2 \ln^2 (k+e) + 1)} \\
        & \ge \frac{\ln (x - k^2 \ln^2 (k+e))}{x - k^2 \ln^2 (k+e)}
        \end{align*}
        for any $x \in [k^2 \ln^2 (k + e) + e,k^2 \ln^2 (k + e) + m_k]$.

        Thus
        \begin{align*}
        M^2 f(x) & \ge \sum_{k = 0}^{\infty} \frac{\ln (x - k^2 \ln^2 (k+e))}{x - k^2 \ln^2 (k+e)}\chi_{[k^2 \ln^2 (k + e) + e,k^2 \ln^2 (k +
            e) + m_k]}(x).
        \end{align*}
        Finally,
        \begin{align*}
        \|M f\|_{\M_{L(1 + \log^+ L),1 / 2}(\R)} & \ap \|M^2 f\|_{{\mathcal M}_{1,1/2}(\R)} \\
        & \gs \sup_k (k \ln (k + e))^{-1} \int_0^{k^2 \ln^2 (k + e)} M^2 f (x)\,dx \\
        & \ge \sup_k (k \ln (k + e))^{-1} \sum_{j=1}^{k-1}\int_{j^2 \ln^2 (j + e) + e}^{j^2 \ln^2 (j + e) + m_j} M^2 f (x)\,dx\\
        & \ge \sup_k (k \ln (k + e))^{-1} \sum_{j=1}^{k-1}\int_{j^2 \ln^2 (j + e) + e}^{j^2 \ln^2 (j + e) + m_j} \frac{\ln (x - k^2 \ln^2 (k+e))}{x - k^2 \ln^2 (k+e)} \,dx \\
        & = \sup_k (k \ln (k + e))^{-1} \sum_{j=1}^{k-1}\int_e^{m_j} \frac{\ln x}{x}\,dx \\
        & \gs \sup_k (k\ln(k + e))^{-1} \sum_{j=1}^{k-1} \ln^2 m_j \\
        & \gs \sup_k (k\ln(k + e))^{-1} \sum_{j=1}^{k-1} \ln^2 (j + e) \\
        & \gs \sup_k (k\ln(k + e))^{-1} k \ln^2 (k + e) \\
        & = \sup_k \ln (k + e) = \infty.
        \end{align*}
    \end{ex}


    \section{Weak-type estimates in Morrey spaces for the iterated maximal function}\label{sect7.5}

    In this section the boundedness of the iterated maximal operator
    $M^2$ from Zygmund-Morrey spaces $\mathcal{M}_{L(1 + \log^+ L),\la}$
    to weak Zygmund-Morrey spaces $\WM_{L(1 + \log^+ L),\la}$ is proved.

    \begin{thm}\label{thm53432}
        Let $0<\la <n$. Then the operator $M^2$ is bounded from
        $\mathcal{M}_{L(1 + \log^+ L),\la}$ to $\WM_{L(1 + \log^+ L),\la}$
        and the following inequality holds
        \begin{equation}\label{eq88008802}
        \begin{split}
        \|M^2f\|_{\WM_{L(1 + \log^+ L),\la}}\leq c \|f\|_{\mathcal{M}_{L(1 +
                \log^+ L),\la}}
        \end{split}
        \end{equation}
        with positive constant $c$ independent of $f$.
    \end{thm}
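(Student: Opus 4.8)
The plan is to reduce the claim to the uniform local distributional estimate
\[
|\{x\in Q: M^2 f(x) > \alpha t\}| \le \frac{|Q|}{t}\Bigl(1+\log^+\tfrac1t\Bigr),\qquad t>0,
\]
with $\alpha = c\,|Q|^{-\la/n}\|f\|_{\mathcal{M}_{L(1+\log^+ L),\la}}$, for an arbitrary fixed cube $Q$. By the very definition of $\|\cdot\|_{WL(1+\log^+ L),Q}$, such an estimate forces $\|M^2 f\|_{WL(1+\log^+ L),Q}\le\alpha$, whence $|Q|^{\la/n}\|M^2 f\|_{WL(1+\log^+ L),Q}\le c\|f\|_{\mathcal{M}_{L(1+\log^+ L),\la}}$; taking the supremum over $Q$ gives \eqref{eq88008802}. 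Write $A:=\|f\|_{\mathcal{M}_{L(1+\log^+ L),\la}}$ and split $f=f_1+f_2$ with $f_1=f\chi_{3Q}$. Since $M$, and hence $M^2$, is sublinear, $\{x\in Q:M^2 f>\alpha t\}\subseteq\{M^2 f_1>\alpha t/2\}\cup\{x\in Q:M^2 f_2>\alpha t/2\}$, and I would estimate the two pieces separately.

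For the local piece I would apply the weak-type bound of Lemma \ref{lem002.8} to $f_1$, obtaining a bound by $\tfrac{c}{s}\int_{3Q}|f|(1+\log^+\tfrac{2|f|}{s})$ with $s=\alpha t$. The key manipulation, and the main obstacle, is that this integrand measures the logarithmic oscillation against the absolute threshold $s$, whereas the Morrey norm controls it only against the local average $|f|_{3Q}$ through Corollary \ref{thm134321r19}. I would bridge this gap with the submultiplicativity $\log^+(ab)\le\log^+a+\log^+b$, writing $\tfrac{2|f|}{s}=\tfrac{|f|}{|f|_{3Q}}\cdot\tfrac{2|f|_{3Q}}{s}$, so that
\[
\int_{3Q}|f|\Bigl(1+\log^+\tfrac{2|f|}{s}\Bigr)\le\int_{3Q}|f|\Bigl(1+\log^+\tfrac{|f|}{|f|_{3Q}}\Bigr)+\Bigl(\log^+\tfrac{2|f|_{3Q}}{s}\Bigr)\int_{3Q}|f|.
\]
The first term is $\ls A|3Q|^{(n-\la)/n}$ by Corollary \ref{thm134321r19}, and the second is controlled the same way after using $|f|_{3Q}\le\|f\|_{L(1+\log^+ L),3Q}\le A|3Q|^{-\la/n}$ from Lemma \ref{lem90099009}. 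Substituting $s=\alpha t$ with $\alpha=cA|Q|^{-\la/n}$ collapses the powers of $|Q|$ (the exponents $\tfrac{n-\la}{n}$ and $\tfrac\la n$ add to $1$) and turns $\log^+\tfrac{2|f|_{3Q}}{s}$ into $\log^+\tfrac1t$ up to an additive constant that is absorbed by taking the constant in $\alpha$ large; this yields a local bound $\le\tfrac12\tfrac{|Q|}{t}(1+\log^+\tfrac1t)$.

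For the tail piece I would use the pointwise equivalence $M^2 g\ap\sup_{Q'\ni x}\|g\|_{L(1+\log^+ L),Q'}$ from \eqref{eq2340958671897252} together with the geometric observation recalled in the proof of Lemma \ref{lem6.1}: for $x\in Q$, any cube $Q'\ni x$ meeting $(3Q)^c$ satisfies $Q\subset 3Q'$, hence $|Q'|\ge 3^{-n}|Q|$, while cubes $Q'\subseteq 3Q$ contribute nothing since $f_2$ vanishes there. Consequently $\|f_2\|_{L(1+\log^+ L),Q'}\le\|f\|_{L(1+\log^+ L),Q'}\le A|Q'|^{-\la/n}\le 3^\la A|Q|^{-\la/n}$ for every relevant $Q'$, so $M^2 f_2(x)\le B:=cA|Q|^{-\la/n}$ uniformly on $Q$. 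Choosing the constant in $\alpha$ large enough that $\alpha/2\ge B$ makes the tail set empty for all $t\ge\tau$ with $\tau$ small; for the remaining small range $t<\tau$ I bound the tail set crudely by $|Q|$ and check that $|Q|\le\tfrac12\tfrac{|Q|}{t}(1+\log^+\tfrac1t)$ there, which holds once $\tau$ lies below the dimension-free root of $2\tau=1+\log\tfrac1\tau$. Adding the two halves gives the displayed distributional estimate and closes the argument. The only delicate point is the bookkeeping of the constants so that both the logarithmic-threshold shift in the local term and the emptiness-versus-trivial-bound dichotomy in the tail term fit under the single weak-type envelope $\tfrac{|Q|}{t}(1+\log^+\tfrac1t)$.
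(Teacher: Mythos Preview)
Your argument is correct and follows the same skeleton as the paper's proof---split $f=f_1+f_2$ around a dilate of $Q$, handle the local piece via Lemma~\ref{lem002.8}, and control the tail via the geometric observation from Lemma~\ref{lem6.1}---but the execution differs in two places.

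For the local piece the paper uses the multiplicative form \eqref{log}, namely $1+\log^+(ab)\le(1+\log^+a)(1+\log^+b)$, with the factorization $\frac{|f|}{\alpha t}=\frac{1}{\alpha}\cdot\frac{|f|}{t}$. This pulls the entire $\alpha$-dependence out as the factor $\frac{1}{\alpha}(1+\log^+\frac{1}{\alpha})$, so the defining supremum over $\alpha$ in $\|\cdot\|_{WL(1+\log^+L),Q}$ collapses immediately and one reads off $\|M^2(f\chi_{4Q})\|_{WL(1+\log^+L),Q}\lesssim\|f\|_{L(1+\log^+L),4Q}$ without ever fixing a specific $\alpha$ or invoking the Morrey norm. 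Your route via the additive inequality $\log^+(ab)\le\log^+a+\log^+b$ and Corollary~\ref{thm134321r19} works, but forces you to carry the constants explicitly and only close the bookkeeping after substituting $\alpha=cA|Q|^{-\la/n}$.

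For the tail the paper does not use $M^2\approx M_{L(1+\log^+L)}$; instead it applies the geometric observation twice, once to $Mf_2$ and once to $M(\chi_{\rn\setminus 2Q}Mf)$, obtaining the constant bound $M^2f_2(y)\lesssim\sup_{Q\subset Q'}\frac{1}{|Q'|}\int_{Q'}Mf$ on $Q$, and then invokes \eqref{eq44005500}. Your shortcut through \eqref{eq2340958671897252} is arguably cleaner here. The paper then simply adds the two weak-Luxemburg norms (a constant has weak norm comparable to itself), whereas you work at the distribution level with a single threshold $\alpha t$, which is why you need the small-$t$/large-$t$ dichotomy for the tail set; the paper avoids that case split entirely.
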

    \begin{proof}
        Let $Q$ be any cube in $\rn$
        and let $f=f_1+f_2$, where $f_1=f\chi_{4Q}$. By subadditivity of
        $M^2$ we get
        \begin{equation*}
        M^2f \leq M^2f_1 +M^2f_2.
        \end{equation*}
        Since for any cube $Q'$ conditions $z\in 2Q\cap Q'$ and
        $Q'\cap\{\rn\backslash 4Q\}\neq\emptyset$ imply $Q\subset 4Q'$, we
        have
        \begin{equation}\label{eq998855}
        Mf_2(z)=M(f\chi_{\rn\backslash 4Q})(z)\leq \sup_{Q\subset
            4Q'}\frac{1}{|Q'|}\int_{Q'}|f|
        \end{equation}
        for any $z\in 2Q$. Thus for any $z\in \rn$
        \begin{equation}\label{eq1122223333}
        Mf_2(z)\leq \chi_{2Q}(z)\sup_{Q\subset
            4Q'}\frac{1}{|Q'|}\int_{Q'}|f| +\chi_{\rn\backslash 2Q}(z)Mf(z).
        \end{equation}
        Applying to both sides of the inequality \eqref{eq1122223333} by
        operator $M$ for any $y\in Q$ we get
        \begin{equation}
        M^2f_2(y)\leq M(\chi_{2Q})(y)\sup_{Q\subset
            4Q'}\frac{1}{|Q'|}\int_{Q'}|f| +M(\chi_{\rn\backslash 2Q}Mf)(y).
        \end{equation}
        Since $M(\chi_{2Q})(y)=1$, $y\in Q$, by the inequality
        \eqref{eq998855} we arrive at
        \begin{equation}
        M^2f_2(y)\leq \sup_{Q\subset 4Q'}\frac{1}{|Q'|}\int_{Q'}|f|
        +\sup_{Q\subset 2Q'}\frac{1}{|Q'|}\int_{Q'}Mf \lesssim
        \sup_{Q\subset Q'}\frac{1}{|Q'|}\int_{Q'}Mf.
        \end{equation}
        Consequently, for $y\in Q$
        \begin{equation}\label{eq5.15}
        M^2f(y)\lesssim M^2(f\chi_{4Q})(y) + \sup_{Q\subset
            Q'}\frac{1}{|Q'|}\int_{Q'}Mf.
        \end{equation}
        In view of inequality 
       \begin{equation}\label{log}
       1+\log^+ (ab)\leq (1+\log^+ a)(1+\log^+ b),
       \end{equation}        
        by Lemma \ref{lem002.8}, for any $\a
        >0$ and $t>0$ we have that
        \begin{equation*}
        \begin{split}
        \left|\left\{x\in Q: M^2(f\chi_{4Q})(x)>\a t
        \right\}\right| \\
        & \hspace{-2cm}\leq \left|\left\{x\in \rn:
        M^2(f\chi_{4Q})(x)>\a t \right\}\right| \\
        & \hspace{-2cm} \leq c \int_{\rn}\frac{|(f\chi_{4Q})(x)|}{\a
            t}\left(1+\log^+
        \left(\frac{|(f\chi_{4Q})(x)|}{\a t}\right)\right)dx \\
        & \hspace{-2cm} \leq c \frac{1}{\a}\left(1+\log^+
        \frac{1}{\a}\right) \int_{4Q}\frac{|f(x)|}{t}\left(1+\log^+
        \left(\frac{|f(x)|}{t}\right)\right)dx .
        \end{split}
        \end{equation*}
        We get that
        \begin{equation*}
        \begin{split}
        \frac{\left|\left\{x\in Q: M^2(f\chi_{4Q})(x)>\a t
            \right\}\right|}{\frac{1}{\a}\left(1+\log^+ \frac{1}{\a}\right)}
        \leq c  \int_{4Q}\frac{|f(x)|}{t}\left(1+\log^+
        \left(\frac{|f(x)|}{t}\right)\right)dx .
        \end{split}
        \end{equation*}
        Consequently,
        \begin{equation*}
        \begin{split}
        \sup_{\a >0}\frac{1}{|Q|}\frac{\left|\left\{x\in Q:
            M^2(f\chi_{4Q})(x)>\a t \right\}\right|}{\frac{1}{\a}\left(1+\log^+
            \frac{1}{\a}\right)} \leq c \, \frac{1}{|4Q|}
        \int_{4Q}\frac{|f(x)|}{t}\left(1+\log^+
        \left(\frac{|f(x)|}{t}\right)\right)dx .
        \end{split}
        \end{equation*}
        Thus
        \begin{equation*}
        \begin{split}
        \inf\left\{t>0 : \sup_{\a
            >0}\frac{1}{|Q|}\frac{\left|\left\{x\in Q: M^2(f\chi_{4Q})(x)>\a
            t \right\}\right|}{\frac{1}{\a}\left(1+\log^+ \frac{1}{\a}\right)}
        \leq 1 \right\} &
        \\
        &\hspace{-5cm}\leq \inf\left\{t>0 :  \frac{1}{|4Q|}
        \int_{4Q}\frac{c|f(x)|}{t}\left(1+\log^+
        \left(\frac{|f(x)|}{t}\right)\right)dx\leq 1 \right\}\\
        \\
        &\hspace{-5cm}\leq \inf\left\{t>0 :  \frac{1}{|4Q|}
        \int_{4Q}\frac{c|f(x)|}{t}\left(1+\log^+
        \left(\frac{c|f(x)|}{t}\right)\right)dx\leq 1 \right\}
        \end{split}
        \end{equation*}
        that is,
        \begin{equation}\label{eq880088}
        \begin{split}
        \|M^2(f\chi_{4Q})\|_{WL(1 + \log^+ L),Q}\leq \|c f\|_{L(1 + \log^+
            L),4Q}=c\|f\|_{L(1 + \log^+ L),4Q}.
        \end{split}
        \end{equation}
        For the second summand in right hand side of the inequality
        \eqref{eq5.15} applying the inequality \eqref{eq44005500} we obtain
        \begin{equation}\label{eq5784756}
        \begin{split}
        \left\|\sup_{Q\subset Q'}\frac{1}{|Q'|}\int_{Q'}Mf
        \right\|_{WL(1 + \log^+ L),Q}  \lesssim \sup_{Q\subset Q'}\frac{1}{|Q'|}\int_{Q'}Mf
        \lesssim \sup_{Q\subset Q'}\|f\|_{L(1 + \log^+ L),Q'}.
        \end{split}
        \end{equation}
        By inequalities \eqref{eq5.15}, \eqref{eq880088} and
        \eqref{eq5784756} we get
        \begin{equation}\label{eq88008801}
        \begin{split}
        \|M^2f\|_{WL(1 + \log^+ L),Q}\leq c \sup_{Q\subset 4Q'}\|f\|_{L(1 +
            \log^+ L),Q'}.
        \end{split}
        \end{equation}
        Thus
        \begin{align*}
        \sup_{Q}|Q|^{\frac{\la}{n}}\|M^2f\|_{WL(1 + \log^+ L),Q} & \leq c \, \sup_{Q}|Q|^{\frac{\la}{n}}\sup_{Q\subset
            4Q'}\|f\|_{L(1 + \log^+ L),Q'}\\
        &\leq c \,
        \left(\sup_{Q}|Q|^{\frac{\la}{n}} \sup_{Q\subset
            4Q'}|Q'|^{-\frac{\la}{n}}\right) \,\sup_{Q}|Q|^{\frac{\la}{n}}\|f\|_{L(1 + \log^+ L),Q}\\
        &\thickapprox \sup_{Q}|Q|^{\frac{\la}{n}}\|f\|_{L(1 +
            \log^+ L),Q},
        \end{align*}
        that is,
        \begin{equation*}
        \begin{split}
        \|M^2f\|_{\WM_{L(1 + \log^+ L),\la}}\leq c \|f\|_{\mathcal{M}_{L(1 +
                \log^+ L),\la}}.
        \end{split}
        \end{equation*}
    \end{proof}


    \section{Weak-type estimates in Morrey spaces for maximal commutator and commutator of maximal function}\label{sect8}

    In this section the class of functions for which the maximal
    commutator $C_b$ is bounded from  $\mathcal{M}_{L(1 + \log^+
        L),\la}$ to $\WM_{L(1 + \log^+ L),\la}$ are characterized. It is
    proved that the commutator of the Hardy-Littlewood maximal operator
    $M$ with function $b \in \B(\rn)$ such that $b^- \in
    L_{\infty}(\rn)$ is bounded from $\mathcal{M}_{L(1 + \log^+ L),\la}$
    to $\WM_{L(1 + \log^+ L),\la}$.

    \begin{thm}\label{main10}
        Let $0<\la <n$. The following assertions are equivalent:

        {\rm (i)} $b\in\B(\rn)$.

        {\rm (ii)} The operator $C_b$ is bounded from $\mathcal{M}_{L(1 +
            \log^+ L),\la}$ to $\WM_{L(1 + \log^+ L),\la}$.
    \end{thm}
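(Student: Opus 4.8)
The plan is to prove the equivalence by establishing each implication separately, mirroring the structure of Theorem \ref{thm3.1} but adapting the arguments to the Zygmund-Morrey setting.

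For the direction ${\rm (i)} \Rightarrow {\rm (ii)}$, I would start from the pointwise control of $C_b$ by $M^2$ furnished by Theorem \ref{lem1111111}, namely $C_b(f)(x) \le c\|b\|_* M^2 f(x)$. Since the weak Zygmund-Morrey (quasi-)norm satisfies the lattice property, this immediately gives
$$
\|C_b(f)\|_{\WM_{L(1+\log^+ L),\la}} \le c \|b\|_* \|M^2 f\|_{\WM_{L(1+\log^+ L),\la}}.
$$
Then I would apply Theorem \ref{thm53432}, which bounds $M^2$ from $\mathcal{M}_{L(1+\log^+ L),\la}$ to $\WM_{L(1+\log^+ L),\la}$, to conclude
$$
\|C_b(f)\|_{\WM_{L(1+\log^+ L),\la}} \le c\|b\|_* \|f\|_{\mathcal{M}_{L(1+\log^+ L),\la}}.
$$
This direction should be routine once the lattice property of the weak norm is checked.

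For the converse ${\rm (ii)} \Rightarrow {\rm (i)}$, I would test the assumed boundedness on characteristic functions of cubes, as in Theorem \ref{thm3.1}. Fixing a cube $Q$ and taking $f = \chi_Q$, I would first compute $\|\chi_Q\|_{\mathcal{M}_{L(1+\log^+ L),\la}}$; since $\chi_Q$ is bounded, its $L(1+\log^+ L)$-average over any cube $Q'$ reduces to $|Q \cap Q'|/|Q'|$ up to constants, and the supremum yields a power of $|Q|$ analogous to \eqref{eq0000005.1}. On the other side, using the lower bound $C_b(\chi_Q)(x) \gs \frac{1}{|Q|}\int_Q |b(y)-b_Q|\,dy$ for $x \in Q$, I would estimate the weak Zygmund-Morrey norm of $C_b(\chi_Q)$ from below by the same power of $|Q|$ times $\frac{1}{|Q|}\int_Q |b(y)-b_Q|\,dy$. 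Combining these with the boundedness hypothesis forces $\frac{1}{|Q|}\int_Q |b(y)-b_Q|\,dy \le c$ uniformly in $Q$, which is precisely $b \in \B(\rn)$.

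The main obstacle I anticipate is the lower-bound computation in the converse direction: unlike the $L_p$ case in Theorem \ref{thm3.1}, the weak $L(1+\log^+ L)$-average is defined through an infimum over a level-set functional, so evaluating $\|C_b(\chi_Q)\|_{WL(1+\log^+ L),Q}$ from below requires care. Since $C_b(\chi_Q)$ is bounded below by the constant $c\cdot\frac{1}{|Q|}\int_Q|b-b_Q|$ on all of $Q$, the weak average of a constant function over $Q$ should be comparable to that constant, but I would need to verify directly from the definition of $\|\cdot\|_{WL(1+\log^+ L),Q}$ that a function bounded below by a constant $a$ on $Q$ has weak average $\gs a$. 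Establishing this elementary but essential estimate for the weak norm, and confirming that the homogeneity in $|Q|$ matches on both sides, is where the real work lies.
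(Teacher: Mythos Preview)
Your plan coincides with the paper's proof in both directions: (i)$\Rightarrow$(ii) is exactly Theorem~\ref{lem1111111} combined with Theorem~\ref{thm53432}, and (ii)$\Rightarrow$(i) tests the boundedness on $f=\chi_{Q}$ and extracts the mean oscillation of $b$ from the pointwise lower bound $C_b(\chi_Q)(x)\ge\frac{1}{2|Q|}\int_Q|b-b_Q|$ on $Q$, just as you describe. The paper handles the weak-norm lower bound you flag as the main obstacle by simply taking $t=2$ in the definition of $\|\cdot\|_{WL(1+\log^+L),Q}$, which immediately shows that a function $\ge a$ throughout $Q$ has weak average $\ge a/2$.

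One intermediate claim in your sketch is incorrect and would stall the computation of $\|\chi_Q\|_{\mathcal{M}_{L(1+\log^+L),\la}}$. The $L(1+\log^+L)$-average of $\chi_Q$ over a cube $Q'$ is \emph{not} $|Q\cap Q'|/|Q'|$ up to absolute constants: by Lemma~\ref{lem90099009} it is $\ap\frac{|Q\cap Q'|}{|Q'|}\bigl(1+\log\frac{|Q'|}{|Q\cap Q'|}\bigr)$, and the logarithmic factor is unbounded as $|Q'|$ grows. Your expected conclusion $\|\chi_Q\|_{\mathcal{M}_{L(1+\log^+L),\la}}\ap|Q|^{\la/n}$ is nonetheless correct; the paper obtains the upper bound by fixing $\ve\in(0,1-\la/n)$ and using $(1+\log t)\ls t^{\ve}$ on $[1,\infty)$, so that
$|Q'|^{\la/n}\cdot\frac{|Q\cap Q'|}{|Q'|}\bigl(1+\log\frac{|Q'|}{|Q\cap Q'|}\bigr)\ls|Q'|^{\la/n+\ve-1}|Q\cap Q'|^{1-\ve}$,
which is maximized when $Q'\subseteq Q$ and gives $|Q|^{\la/n}$.
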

    \begin{proof}
        {${\rm (i)}\Rightarrow {\rm (ii)}$}. Assume that $b\in\B(\rn)$. By
        Theorem \ref{lem1111111} and Theorem \ref{thm53432} operator $C_b$
        is bounded from $\mathcal{M}_{L(1 + \log^+ L),\la}$ to
        $\WM_{L(1 + \log^+ L),\la}$ and the following inequality
        holds
        \begin{equation}\label{eq8800887645}
        \begin{split}
        \|C_b(f)\|_{\WM_{L(1 + \log^+ L),\la}}\leq c \|b\|_{*}
        \|f\|_{\mathcal{M}_{L(1 + \log^+ L),\la}}
        \end{split}
        \end{equation}
        with positive constant $c$ independent of $f$.

        {${\rm (ii)}\Rightarrow {\rm (i)}$}. Assume that the inequality
        \begin{equation}\label{eq34544724r2346657u}
        \begin{split}
        \|C_b(f)\|_{\WM_{L(1 + \log^+ L),\la}}\leq c \|f\|_{\mathcal{M}_{L(1
                + \log^+ L),\la}}.
        \end{split}
        \end{equation}
        holds with positive constant $c$ independent of $f$. Let $Q_0$ be
        any cube in $\rn$ and let $f=\chi_{Q_0}$.

        By Theorem \ref{thm6.3},
        \begin{equation*}
        \begin{split}
        \|\chi_{Q_0}\|_{\mathcal{M}_{L(1 + \log^+ L),\la}} & \thickapprox
        \sup_{Q}|Q|^{\frac{\la-n}{n}}\int_{Q}\chi_{Q_0}\left(1+\log^+
        \frac{\chi_{Q_0}}{(\chi_{Q_0})_{Q}}\right)\\
        &=\sup_{Q:\,Q\cap Q_0 \neq \emptyset}|Q|^{\frac{\la}{n}}\frac{|Q\cap Q_0|}{|Q|}\left(1+\log
        \frac{|Q|}{|Q\cap Q_0|}\right).
        \end{split}
        \end{equation*}
        Obviously,
        $$
        \|\chi_{Q_0}\|_{\mathcal{M}_{L(1 + \log^+ L),\la}}  \gs
        |Q_0|^{\frac{\la}{n}}.
        $$
        Let $\ve \in (0,1-\la / n)$. Since the function $(1+\log t)/ t^{\ve}$ is bounded on the interval
        $[1,\infty)$, we get
        \begin{align*}
        \|\chi_{Q_0}\|_{\mathcal{M}_{L(1 + \log^+ L),\la}}  & \ls
        \sup_{Q:\,Q\cap Q_0 \neq \emptyset}|Q|^{\frac{\la}{n}}\frac{|Q\cap
            Q_0|}{|Q|}\left(
        \frac{|Q|}{|Q\cap Q_0|}\right)^{\ve} \\
        & = \sup_{Q:\,Q\cap Q_0 \neq \emptyset}|Q|^{\frac{\la}{n}+\ve-1}|Q\cap Q_0|^{1-\ve} \\
        & = \sup_{Q \subseteq Q_0}|Q|^{\frac{\la}{n}+\ve-1}|Q\cap Q_0|^{1-\ve} =|Q_0|^{\frac{\la}{n}}.
        \end{align*}
        Thus
        \begin{equation}\label{eq56789345678}
        \|\chi_{Q_0}\|_{\mathcal{M}_{L(1 + \log^+ L),\la}}  \thickapprox
        |Q_0|^{\frac{\la}{n}}.
        \end{equation}
        On the other hand
        \begin{equation*}
        \begin{split}
        \|C_b(\chi_{Q_0})\|_{W\mathcal{M}_{L(1 + \log^+ L),\la}}&=\sup_{Q}|Q|^{\frac{\la}{n}}\|C_b(\chi_{Q_0})\|_{WL(1 + \log^+ L),Q}\\
        &\geq |Q_0|^{\frac{\la}{n}}\|C_b(\chi_{Q_0})\|_{WL(1 + \log^+
            L),Q_0}.
        \end{split}
        \end{equation*}
        Note that
        \begin{equation*}
        \begin{split}
        \|C_b(\chi_{Q_0})\|_{WL(1 + \log^+ L),Q_0} &\\
        &\hspace{-2cm}=\inf \left\{\la>0:
        \sup_{t>0}\frac{1}{|Q_0|}\frac{|\{x\in Q_0: |C_b(\chi_{Q_0})(x)|>\la
            t\}|}{\frac{1}{t}\left(1+\log^+ \frac{1}{t}\right)}\leq 1\right\} \\
        &\hspace{-2cm}\geq \inf \left\{\la>0: \frac{2}{|Q_0|}|\{x\in Q_0:
        |C_b(\chi_{Q_0})(x)|> 2\la\}|\leq 1\right\}.
        \end{split}
        \end{equation*}
        Since for any  $x\in Q_0$
        \begin{equation*}
        C_b(\chi_{Q_0})(x)\geq \frac{1}{|Q_0|}\int_{Q_0}|b(x)-b(y)|dy \geq
        \frac{1}{2|Q_0|}\int_{Q_0}|b(y)-b_{Q_0}|dy,
        \end{equation*}
        then
        \begin{equation*}
        \begin{split}
        \frac{2}{|Q_0|}\left|\left\{x\in Q_0: |C_b(\chi_{Q_0})(x)|> 2\,
        \frac{1}{4|Q_0|}\int_{Q_0}|b(y)-b_{Q_0}|dy\right\}\right|=2.
        \end{split}
        \end{equation*}
        Thus
        \begin{equation*}
        \|C_b(\chi_{Q_0})\|_{WL(1 + \log^+ L),Q_0}  \geq
        \frac{1}{4|Q_0|}\int_{Q_0}|b(y)-b_{Q_0}|dy.
        \end{equation*}
        Consequently,
        \begin{equation}\label{eq87678656566}
        \begin{split}
        \|C_b(\chi_{Q_0})\|_{W\mathcal{M}_{L(1 + \log^+ L),\la}}\gtrsim
        |Q_0|^{\frac{\la}{n}}\frac{1}{|Q_0|}\int_{Q_0}|b(y)-b_{Q_0}|dy.
        \end{split}
        \end{equation}
        By \eqref{eq34544724r2346657u}, \eqref{eq56789345678} and
        \eqref{eq87678656566} we arrive at
        $$
        \frac{1}{|Q_0|}\int_{Q_0}|b(y)-b_{Q_0}|dy \ls c.
        $$
    \end{proof}

    Combining Theorems  \ref{lem1111111} and \ref{main11}, we get the following statement.
    \begin{thm}
        Let $0<\la <n$. Assume that $b\in\B(\rn)$.
        Then the operator $C_b$ is bounded on $\mathcal{M}_{L(1 + \log^+ L),\la}$ for radially decreasing functions.
    \end{thm}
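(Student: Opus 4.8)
The plan is to reduce the boundedness of $C_b$ to the pointwise domination by the iterated maximal operator, and then to exploit the boundedness of $M$ on radially decreasing functions established in Theorem \ref{main11}. The key structural fact I would use is that the Zygmund-Morrey norm depends only on $|f|$ and is monotone in it (the defining $L(1 + \log^+ L)$-average sees only the absolute value of the function), so $\M_{L(1 + \log^+ L),\la}$ has the lattice property and every pointwise bound transfers directly to a norm bound.

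First I would invoke the pointwise estimate of Theorem \ref{lem1111111}: for $b \in \B(\rn)$ there is a constant $c$ with $C_b f(x) \le c \|b\|_* M^2 f(x)$ for all $x \in \rn$. By the lattice property this gives
$$
\|C_b f\|_{\M_{L(1 + \log^+ L),\la}} \ls \|b\|_* \|M^2 f\|_{\M_{L(1 + \log^+ L),\la}},
$$
so it suffices to bound $\|M^2 f\|_{\M_{L(1 + \log^+ L),\la}}$ by $\|f\|_{\M_{L(1 + \log^+ L),\la}}$ for $f \in \mf^{\rad,\dn}$. To do this I would write $M^2 f = M(Mf)$ and use the fact, recalled in Section \ref{sect2}, that $Mf \ap Hf$ with $Hf \in \mf^{\rad,\dn}$ whenever $f \in \mf^{\rad,\dn}$. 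Since $M$ is monotone, $M(Mf) \ap M(Hf)$, and because $Hf$ is genuinely radially decreasing, Theorem \ref{main11} applies to it, yielding
$$
\|M(Hf)\|_{\M_{L(1 + \log^+ L),\la}} \ls \|Hf\|_{\M_{L(1 + \log^+ L),\la}} \ap \|Mf\|_{\M_{L(1 + \log^+ L),\la}}.
$$
A second application of Theorem \ref{main11}, now to $f$ itself, gives $\|Mf\|_{\M_{L(1 + \log^+ L),\la}} \ls \|f\|_{\M_{L(1 + \log^+ L),\la}}$. Chaining the two estimates produces $\|M^2 f\|_{\M_{L(1 + \log^+ L),\la}} \ls \|f\|_{\M_{L(1 + \log^+ L),\la}}$, which combined with the first display completes the argument.

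The only delicate point, and the step I expect to be the main obstacle, is the double application of Theorem \ref{main11}: that theorem takes a radially decreasing argument, whereas $Mf$ is only \emph{equivalent} to the radially decreasing function $Hf$. The obstacle is thus to justify that the equivalence $Mf \ap Hf$, together with the monotonicity of $M$, legitimately allows $Mf$ to be fed back into the theorem. Once the observation $M(Mf) \ap M(Hf)$ is made this is routine, and no estimate beyond Theorems \ref{lem1111111} and \ref{main11} is required.
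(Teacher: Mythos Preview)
Your proposal is correct and follows exactly the route the paper intends: it combines the pointwise bound $C_b f \ls \|b\|_* M^2 f$ from Theorem \ref{lem1111111} with two applications of Theorem \ref{main11}, precisely as the paper's one-line proof (``Combining Theorems \ref{lem1111111} and \ref{main11}'') indicates. You have simply made explicit the intermediate passage through $Hf$ needed to feed $Mf$ back into Theorem \ref{main11}, a detail the paper leaves to the reader.
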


    The following theorems hold true.
    \begin{thm}\label{thm4567895432}
        Let $0<\la <n$ and $b$ is in $\B(\rn)$ such that $b^-\in
        L_{\infty}(\rn)$. Then the operator $[M,b]$ is bounded from
        $\mathcal{M}_{L(1 + \log^+ L),\la}$ to $\WM_{L(1 + \log^+ L),\la}$
        and the following inequality holds
        \begin{equation*}
        \|[M,b]f\|_{\WM_{L(1 + \log^+ L),\la}}\leq
        c\left(\|b^+\|_{*}+\|b^-\|_{L_{\infty}}\right)
        \|f\|_{\mathcal{M}_{L(1 + \log^+ L),\la}}
        \end{equation*}
        with positive constant $c$ independent of $f$.
    \end{thm}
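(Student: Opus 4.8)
The plan is to deduce the estimate directly from the weak-type boundedness of the iterated maximal operator, which is already available as Theorem \ref{thm53432}, by combining it with the pointwise control of $[M,b]$ by $M^2f$ and the monotonicity of the weak Zygmund-Morrey norm. In other words, the whole argument mirrors the derivation of the $(i)\Rightarrow(ii)$ implication in the remark following Theorem \ref{thm4.5}, with inequality \eqref{ChiFr} replaced by Theorem \ref{thm53432}.

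First I would invoke inequality \eqref{eqPointwise3} of Theorem \ref{lem1111111}: since $b\in\B(\rn)$ and $b^-\in L_{\infty}(\rn)$, one has the pointwise bound
$$
|[M,b]f(x)|\le c\,\bigl(\|b^+\|_{*}+\|b^-\|_{\infty}\bigr)\,M^2f(x)\qquad(x\in\rn)
$$
for all $f\in L_1^{\loc}(\rn)$. Next I would record the lattice (monotonicity) property of the weak Zygmund-Morrey space: if $0\le g\le h$ a.e., then $\|g\|_{\WM_{L(1+\log^+ L),\la}}\le\|h\|_{\WM_{L(1+\log^+ L),\la}}$. This is immediate from the definitions, since for every cube $Q$ the inclusion $\{x\in Q:|g(x)|>\alpha t\}\subseteq\{x\in Q:|h(x)|>\alpha t\}$ makes the distribution functional defining $\|\cdot\|_{WL(1+\log^+ L),Q}$ monotone, so that $\|g\|_{WL(1+\log^+ L),Q}\le\|h\|_{WL(1+\log^+ L),Q}$, and taking the supremum over $Q$ against the common weight $|Q|^{\la/n}$ preserves the inequality.

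Combining the pointwise bound, this monotonicity, and Theorem \ref{thm53432}, I would conclude
$$
\|[M,b]f\|_{\WM_{L(1+\log^+ L),\la}}\le c\,\bigl(\|b^+\|_{*}+\|b^-\|_{\infty}\bigr)\,\|M^2f\|_{\WM_{L(1+\log^+ L),\la}}\le c\,\bigl(\|b^+\|_{*}+\|b^-\|_{\infty}\bigr)\,\|f\|_{\mathcal{M}_{L(1+\log^+ L),\la}},
$$
which is exactly the asserted estimate. I do not expect a genuine obstacle here, since the two heavy ingredients (the pointwise domination by $M^2f$ and the weak-type boundedness of $M^2$) are already established; the only point requiring care is the monotonicity of the local weak average, which however rests solely on the monotonicity of the distribution function. (One could equally run the argument through \eqref{eq.0001} and Theorem \ref{main10}, estimating $b^-(x)Mf(x)\le\|b^-\|_{\infty}Mf(x)\le\|b^-\|_{\infty}M^2f(x)$ and again applying Theorem \ref{thm53432}, but the route via \eqref{eqPointwise3} is the most economical.)
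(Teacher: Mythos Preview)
Your proposal is correct and follows exactly the paper's approach: the paper's proof is the single line ``The statement follows by Theorem \ref{lem1111111} and Theorem \ref{thm53432},'' and you have spelled out precisely this argument, including the implicit use of the lattice property of $\WM_{L(1+\log^+ L),\la}$.
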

    \begin{proof}
        The statement follows by Theorem \ref{lem1111111} and Theorem
        \ref{thm53432}.
    \end{proof}

    \begin{thm}\label{thm456789543223}
        Let $0<\la <n$ and $b$ is in $\B(\rn)$ such that $b^-\in
        L_{\infty}(\rn)$. Then the operator $[M,b]$ is bounded on
        $\mathcal{M}_{L(1 + \log^+ L),\la}$ for radially decreasing functions,
        and the following inequality holds
        \begin{equation*}
        \|[M,b]f\|_{\mathcal{M}_{L(1 + \log^+ L),\la}}\leq
        c\left(\|b^+\|_{*}+\|b^-\|_{L_{\infty}}\right)
        \|f\|_{\mathcal{M}_{L(1 + \log^+ L),\la}},~ f \in \mf^{\rad,\dn},
        \end{equation*}
        with positive constant $c$ independent of $f$.
    \end{thm}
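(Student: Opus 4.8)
The plan is to reduce the assertion to iterating the strong-type bound for $M$ on radially decreasing functions (Theorem \ref{main11}) through the pointwise domination of $[M,b]$ by $M^2$. First I would invoke inequality \eqref{eqPointwise3} of Theorem \ref{lem1111111}: since $b \in \B(\rn)$ with $b^- \in L_{\infty}(\rn)$, one has
\begin{equation*}
|[M,b]f(x)| \leq c\left(\|b^+\|_* + \|b^-\|_{L_\infty}\right) M^2 f(x), \qquad x \in \rn.
\end{equation*}
The functional $\|\cdot\|_{\mathcal{M}_{L(1 + \log^+ L),\la}}$ enjoys the lattice property, because the local Luxemburg average $\|\cdot\|_{L(1 + \log^+ L),Q}$ is monotone with respect to $|f|$ and the supremum over cubes preserves this monotonicity. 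Applying the norm to both sides of the pointwise estimate therefore reduces the problem to proving the strong-type bound $\|M^2 f\|_{\mathcal{M}_{L(1 + \log^+ L),\la}} \ls \|f\|_{\mathcal{M}_{L(1 + \log^+ L),\la}}$ for every $f \in \mf^{\rad,\dn}$.

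The key observation is that $M$ preserves radial monotonicity up to equivalence. Recall that $Mf \ap Hf$ for $f \in \mf^{\rad,\dn}$ and that $Hf \in \mf^{\rad,\dn}$ whenever $f \in \mf^{\rad,\dn}$. Hence $Mf$ is pointwise comparable to the genuinely radially decreasing function $Hf$, and by the monotonicity of $M$ we obtain $M^2 f = M(Mf) \ap M(Hf)$. In particular $M^2 f$ is comparable to the maximal function of a radially decreasing function, so Theorem \ref{main11} is applicable not only to $f$ itself but also to the inner factor $Hf$.

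I would finish by iterating Theorem \ref{main11}, using at each step that the Zygmund--Morrey norm respects the pointwise equivalences $Mf \ap Hf$ and $M^2 f \ap M(Hf)$ (again a consequence of the monotone structure of the norm):
\begin{align*}
\|M^2 f\|_{\mathcal{M}_{L(1 + \log^+ L),\la}}
&\ap \|M(Hf)\|_{\mathcal{M}_{L(1 + \log^+ L),\la}}
\ls \|Hf\|_{\mathcal{M}_{L(1 + \log^+ L),\la}} \\
&\ap \|Mf\|_{\mathcal{M}_{L(1 + \log^+ L),\la}}
\ls \|f\|_{\mathcal{M}_{L(1 + \log^+ L),\la}},
\end{align*}
where the inner inequality applies Theorem \ref{main11} to the radially decreasing function $Hf$ and the outer inequality applies it to $f$. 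Combining this chain with the first display yields the claimed estimate. The main obstacle is precisely the middle step: one must be careful that $Mf$ (equivalently $Hf$) really does lie in $\mf^{\rad,\dn}$, so that Theorem \ref{main11} can legitimately be invoked a second time, and that the passage from $M^2f$ to $M(Hf)$ does not lose control of the norm. Both points follow from the equivalence $Mf \ap Hf$ together with the lattice property of the underlying local average, so the remainder of the argument is routine bookkeeping.
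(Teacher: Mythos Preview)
Your proposal is correct and follows the same route as the paper: the paper's proof reads simply ``The statement follows by Theorems \ref{lem1111111} and \ref{main11},'' and you have supplied precisely the details that this one-line proof suppresses, namely the lattice property of the Zygmund--Morrey norm, the passage from $Mf$ to the genuinely radially decreasing $Hf$ via $Mf\ap Hf$, and the two-fold application of Theorem \ref{main11}.
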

    \begin{proof}
        The statement follows by Theorems \ref{lem1111111} and \ref{main11}.
    \end{proof}

    \begin{bibdiv}
        \begin{biblist}

            \bib{AGKM}{article}{
                author={Agcayazi, M.},
                author={Gogatishvili, A.},
                author={Koca, K},
                author={Mustafayev, R},
                title={A note on maximal commutators and
                    commutators of maximal functions},
                journal={J. Math. Soc. Japan.},
                volume={67},
                date={2015},
                number={2},
                pages={581--593},
            }

            \bib{BasMilRu}{article}{
                author={Bastero, J.},
                author={Milman, M.},
                author={Ruiz, Francisco J.},
                title={Commutators for the maximal and sharp functions},
                journal={Proc. Amer. Math. Soc.},
                volume={128},
                date={2000},
                number={11},
                pages={3329--3334 (electronic)},
                issn={0002-9939},
                review={\MR{1777580 (2001i:42027)}},
            }

            \bib{BenSharp}{article}{
                author={Bennett, C.},
                author={Sharpley, R.},
                title={Weak-type inequalities for $H^{p}$ and BMO},
                conference={
                    title={Harmonic analysis in Euclidean spaces (Proc. Sympos. Pure
                        Math., Williams Coll., Williamstown, Mass., 1978), Part 1},
                },
                book={
                    series={Proc. Sympos. Pure Math., XXXV, Part},
                    publisher={Amer. Math. Soc.},
                    place={Providence, R.I.},
                },
                date={1979},
                pages={201--229},
                review={\MR{545259 (80j:46044)}},
            }

            \bib{BS}{book}{
                author={Bennett, C.},
                author={Sharpley, R.},
                title={Interpolation of operators},
                series={Pure and Applied Mathematics},
                volume={129},
                publisher={Academic Press Inc.},
                place={Boston, MA},
                date={1988},
                pages={xiv+469},
                isbn={0-12-088730-4},
                review={\MR{928802 (89e:46001)}},
            }

            \bib{bijz}{article}{
                author={Bonami, A.},
                author={Iwaniec, T.},
                author={Jones, P.},
                author={Zinsmeister, M.},
                title={On the product of functions in BMO and $H^1$},
                journal={Ann. Inst. Fourier (Grenoble)},
                volume={57},
                date={2007},
                number={5},
                pages={1405--1439},
                issn={0373-0956},
                review={\MR{2364134 (2009d:42054)}},
            }

            \bib{CarPas}{article}{
                author={Carozza, M.},
                author={Passarelli Di Napoli, A.},
                title={Composition of maximal operators},
                journal={Publ. Mat.},
                volume={40},
                date={1996},
                number={2},
                pages={397--409},
                issn={0214-1493},
                review={\MR{1425627 (98f:42013)}},
            }

            \bib{ChiFra1987}{article}{
                author={Chiarenza, F.},
                author={Frasca, M.},
                title={Morrey spaces and Hardy-Littlewood maximal function},
                journal={Rend. Mat. Appl. (7)},
                volume={7},
                date={1987},
                number={3-4},
                pages={273--279 (1988)},
                issn={1120-7183},
                review={\MR{985999 (90f:42017)}},
            }

            \bib{GR}{book}{
                author={Garcia-Cuerva, J.},
                author={Rubio de Francia, J.L.},
                title={Weighted norm inequalities and related topics},
                series={North-Holland Mathematics Studies},
                volume={116},
                note={Notas de Matem\'atica [Mathematical Notes], 104},
                publisher={North-Holland Publishing Co.},
                place={Amsterdam},
                date={1985},
                pages={x+604},
            }

            \bib{GHST}{article}{
                author={Garcia-Cuerva, J.},
                author={Harboure, E.},
                author={Segovia, C.},
                author={Torrea, J. L.},
                title={Weighted norm inequalities for commutators of strongly singular
                    integrals},
                journal={Indiana Univ. Math. J.},
                volume={40},
                date={1991},
                number={4},
                pages={1397--1420},
                issn={0022-2518},
                review={\MR{1142721 (93f:42031)}},
            }

            \bib{giltrud}{book}{
                author={Gilbarg, D.},
                author={Trudinger, N. S.},
                title={Elliptic partial differential equations of second order},
                edition={2},
                publisher={Springer-Verlag, Berlin},
                date={1983},
                pages={xiii+513},
                isbn={3-540-13025-X},
            }

            \bib{gogmus}{article}{
                author={Gogatishvili, A.},
                author={Mustafayev, R.Ch.},
                title={A note on boundedness of the Hardy-Littlewood maximal operator on Morrey spaces},
                journal={submitted},
            }

            \bib{graf2008}{book}{
                author={Grafakos, L.},
                title={Classical Fourier analysis},
                series={Graduate Texts in Mathematics},
                volume={249},
                edition={2},
                publisher={Springer, New York},
                date={2008},
                pages={xvi+489},
                isbn={978-0-387-09431-1},
                review={\MR{2445437 (2011c:42001)}},
            }

            \bib{graf}{book}{
                author={Grafakos, L.},
                title={Modern Fourier analysis},
                series={Graduate Texts in Mathematics},
                volume={250},
                edition={2},
                publisher={Springer},
                place={New York},
                date={2009},
                pages={xvi+504},
                isbn={978-0-387-09433-5},
                review={\MR{2463316 (2011d:42001)}},
            }

            \bib{guz1975}{book}{
                author={de Guzm{\'a}n, M.},
                title={Differentiation of integrals in $R^{n}$},
                series={Lecture Notes in Mathematics, Vol. 481},
                note={With appendices by Antonio C\'ordoba, and Robert Fefferman, and two
                    by Roberto Moriy\'on},
                publisher={Springer-Verlag, Berlin-New York},
                date={1975},
                pages={xii+266},
            }

            \bib{HuLinYang}{article}{
                author={Hu, G.},
                author={Lin, H.},
                author={Yang, D.},
                title={Commutators of the Hardy-Littlewood maximal operator with BMO
                    symbols on spaces of homogeneous type},
                journal={Abstr. Appl. Anal.},
                date={2008},
                pages={Art. ID 237937, 21},
                issn={1085-3375},
                review={\MR{2393116 (2009d:42046)}},
            }

            \bib{HuYang}{article}{
                author={Hu, G.},
                author={Yang, D.},
                title={Maximal commutators of BMO functions and singular integral
                    operators with non-smooth kernels on spaces of homogeneous type},
                journal={J. Math. Anal. Appl.},
                volume={354},
                date={2009},
                number={1},
                pages={249--262},
                issn={0022-247X},
                review={\MR{2510436 (2010c:43018)}},
            }

            \bib{IwMar}{book}{
            	author={Iwaniec, T.},
            	author={Martin, G.},
            	title={Geometric function theory and non-linear analysis},
            	series={Oxford Mathematical Monographs},
            	publisher={The Clarendon Press, Oxford University Press, New York},
            	date={2001},
            	pages={xvi+552},
            	isbn={0-19-850929-4},
            	review={\MR{1859913 (2003c:30001)}},
            }    

            \bib{JN}{article}{
                author={John, F.},
                author={Nirenberg, L.},
                title={On functions of bounded mean oscillation},
                journal={Comm. Pure Appl. Math.},
                volume={14},
                date={1961},
                pages={415--426},
                issn={0010-3640},
                review={\MR{0131498 (24 \#A1348)}},
            }

            \bib{k}{article}{
                author={Kita, H.},
                title={On maximal functions in Orlicz spaces},
                journal={Proc. Amer. Math. Soc.},
                volume={124},
                date={1996},
                number={10},
                pages={3019--3025},
                issn={0002-9939},
                review={\MR{1376993 (97b:42031)}},
            }

            \bib{Leck}{article}{
                author={Leckband, M. A.},
                author={Neugebauer, C. J.},
                title={A general maximal operator and the $A_{p}$-condition},
                journal={Trans. Amer. Math. Soc.},
                volume={275},
                date={1983},
                number={2},
                pages={821--831},
                issn={0002-9947},
                review={\MR{682735 (84c:42029)}},
                doi={10.2307/1999056},
            }

            \bib{LeckN}{article}{
                author={Leckband, M. A.},
                title={A note on maximal operators and reversible weak type inequalities},
                journal={Proc. Amer. Math. Soc.},
                volume={92},
                date={1984},
                number={1},
                pages={19--26},
                issn={0002-9939},
                review={\MR{749882 (86i:42009)}},
                doi={10.2307/2045145},
            }

                \bib{LiHuShi}{article}{
                author={Li, D.},
                author={Hu, G.},
                author={Shi, X.},
                title={Weighted norm inequalities for the maximal commutators of singular
                integral operators},
                journal={J. Math. Anal. Appl.},
                volume={319},
                date={2006},
                number={2},
                pages={509--521},
                issn={0022-247X},
                review={\MR{2227920 (2007a:42041)}},
            }

            \bib{MilSchon}{article}{
                author={Milman, M.},
                author={Schonbek, T.},
                title={Second order estimates in interpolation theory and applications},
                journal={Proc. Amer. Math. Soc.},
                volume={110},
                date={1990},
                number={4},
                pages={961--969},
                issn={0002-9939},
                review={\MR{1075187 (91k:46088)}},
            }

            \bib{M1938}{article}{
                author={Morrey, C. B.},
                title={On the solutions of quasi-linear elliptic partial differential
                    equations},
                journal={Trans. Amer. Math. Soc.},
                volume={43},
                date={1938},
                number={1},
                pages={126--166},
                issn={0002-9947},
                review={\MR{1501936}},
                doi={10.2307/1989904},
            }

            \bib{CPer}{article}{
                author={Perez, C.},
                title={Endpoint estimates for commutators of singular integral operators},
                journal={J. Funct. Anal.},
                volume={128},
                date={1995},
                number={1},
                pages={163--185},
                issn={0022-1236},
                review={\MR{1317714 (95j:42011)}},
            }

            \bib{RR}{book}{
                author={Rao, M. M.},
                author={Ren, Z. D.},
                title={Theory of Orlicz spaces},
                series={Monographs and Textbooks in Pure and Applied Mathematics},
                volume={146},
                publisher={Marcel Dekker Inc.},
                place={New York},
                date={1991},
                pages={xii+449},
                isbn={0-8247-8478-2},
                review={\MR{1113700 (92e:46059)}},
            }

            \bib{sst}{article}{
                author={Sawano, Y.},
                author={Sugano, S.},
                author={Tanaka, H.},
                title={Orlicz-Morrey spaces and fractional operators},
                journal={Potential Anal.},
                volume={36},
                date={2012},
                number={4},
                pages={517--556},
                issn={0926-2601},
                review={\MR{2904632}},
                doi={10.1007/s11118-011-9239-8},
            }

            \bib{ST1}{article}{
                author={Segovia, C.},
                author={Torrea, J.L.},
                title={Weighted inequalities for commutators of fractional and singular
                    integrals},
                note={Conference on Mathematical Analysis (El Escorial, 1989)},
                journal={Publ. Mat.},
                volume={35},
                date={1991},
                number={1},
                pages={209--235},
                issn={0214-1493},
                review={\MR{1103616 (93f:42035)}},
            }

            \bib{ST2}{article}{
                author={Segovia, C.},
                author={Torrea, J.L.},
                title={Higher order commutators for vector-valued Calder\'on-Zygmund
                    operators},
                journal={Trans. Amer. Math. Soc.},
                volume={336},
                date={1993},
                number={2},
                pages={537--556},
                issn={0002-9947},
                review={\MR{1074151 (93f:42036)}},
            }

            \bib{stein1969}{article}{
                author={Stein, E. M.},
                title={Note on the class $L$ ${\rm log}$ $L$},
                journal={Studia Math.},
                volume={32},
                date={1969},
                pages={305--310},
                issn={0039-3223},
                review={\MR{0247534 (40 \#799)}},
            }

            \bib{stein1970}{book}{
                author={Stein, E.M.},
                title={Singular integrals and differentiability properties of functions},
                series={Princeton Mathematical Series, No. 30},
                publisher={Princeton University Press, Princeton, N.J.},
                date={1970},
                pages={xiv+290},
                review={\MR{0290095 (44 \#7280)}},
            }

            \bib{stein1993}{book}{
                author={Stein, E.M.},
                title={Harmonic analysis: real-variable methods, orthogonality, and
                    oscillatory integrals},
                series={Princeton Mathematical Series},
                volume={43},
                note={With the assistance of Timothy S. Murphy;
                    Monographs in Harmonic Analysis, III},
                publisher={Princeton University Press, Princeton, NJ},
                date={1993},
                pages={xiv+695},
                isbn={0-691-03216-5},
                review={\MR{1232192 (95c:42002)}},
            }

            \bib{tor1986}{book}{
                author={Torchinsky, A.},
                title={Real-variable methods in harmonic analysis},
                series={Pure and Applied Mathematics},
                volume={123},
                publisher={Academic Press, Inc., Orlando, FL},
                date={1986},
                pages={xii+462},
                isbn={0-12-695460-7},
                isbn={0-12-695461-5},
                review={\MR{869816 (88e:42001)}},
            }

            \bib{x}{article}{
                author={Xie, C. P.},
                title={Some estimates of commutators},
                journal={Real Anal. Exchange},
                volume={36},
                date={2010/11},
                number={2},
                pages={405--415},
                issn={0147-1937},
                review={\MR{3016724}},
            }

        \end{biblist}
    \end{bibdiv}

\end{document}